\theoremstyle{plain}
\newtheorem{theorem}{Theorem}[section]
\newtheorem{proposition}[theorem]{Proposition}
\newtheorem{corollary}[theorem]{Corollary}
\newtheorem{lemma}[theorem]{Lemma}
\theoremstyle{definition}
\newtheorem{definition}[theorem]{Definition}
\newtheorem{algorithm}[theorem]{Algorithm}
\newtheorem{remark}[theorem]{Remark}
\newtheorem{example}[theorem]{Example}
\theoremstyle{plain}
\newtheorem{notation}[theorem]{Notation}
\numberwithin{equation}{section}
\numberwithin{table}{section} 
\def\<{\left<}
\def\>{\right>}
\def\k{\mathsf{k}}
\renewcommand{\thefootnote}{\arabic{footnote}}
\title{The smallest part of the generic partition of the nilpotent commutator of a nilpotent matrix}
\author{Leila Khatami} 
\begin{document}
\date{}
\maketitle
{\let\thefootnote\relax\footnotetext{MSC 2010:  05E40, 06A11, 14L30, 15A21}}
{\let\thefootnote\relax\footnotetext{Keywords: Jordan type, nilpotent matrix, commutator, poset, antichain}}

\begin{abstract}
Let $\k$ be an infinite field. Fix a Jordan nilpotent $n \times n$ matrix $B=J_P$ with entries in $\k$ and associated Jordan type $P$. Let $Q(P)$ be the Jordan type of a generic nilpotent matrix commuting with $B$. In this paper, we use the combinatorics of a poset associated to the partition $P$, to give an explicit formula for the smallest part of $Q(P)$, which is independent of the characteristic of $\k$. This, in particular, leads to a complete description of $Q(P)$ when it has at most three parts.
\end{abstract}

\section*{Introduction}
Let $\k$ be an infinite field and $\mathcal{M}at_n(\k)$ be the set of all $n \times n$ matrices with entries in $\k$. Suppose that $B=J_P \in\{ \mathcal{M}at_n(\k)\}$ is a Jordan matrix with associated Jordan type -- Jordan block partition-- $P\vdash n$. Recall that the centralizer and the nilpotent centralizer of $B$ are defined as follows.
$$\begin{array}{ll}
\mathcal{C}_B=&\{A \in \mathcal{M}at_n(\k) \, | \, AB=BA \}, \\
\mathcal{N}_B=&\{A \in \mathcal{C}_B \, | \, A^n=0 \}.
\end{array}$$

It is well known that $\mathcal{N}_B$ is an irreducible algebraic variety (see \cite[Lemma 1.5]{BI}). Therefore, there is a unique partition of $n$ corresponding to the Jordan type of a generic element of $\mathcal{N}_B$. We denote this unique partition by $Q(P)$. The map $P \to Q(P)$ has been studied by different authors (see \cite{BI}, \cite{BIK}, \cite{IK}, \cite{uniqueness}, \cite{KO}, \cite{Oblak}, \cite{Pa}). 

The number of parts of the partition $Q(P)$ was completely determined by R. Basili (\cite[Proposition 2.4]{Basili03} and \cite[Theorem 2.17]{BIK}). It is known that if char$\k=0$ (see \cite{KO}) or char $\k>n$ (see \cite{BI}), then the map $P \to Q(P)$ is idempotent: $Q(Q(P))=Q(P)$.  In \cite{Oblak}, P. Oblak gives a formula for the index-- largest part -- of the partition $Q(P)$ over the field of real numbers. Her result was extended to any infinite field, by  A. Iarrobino and the author in  \cite{IK}. This, in particular, gives rise to an explicit formula for the parts of $Q(P)$ when it has one or two parts. 

In this paper, we work with a poset $\mathcal{D}_P$ determined by a given partition $P$. The poset was used in \cite{KO} and \cite{Oblak}, and was defined in \cite{BIK} in close connection with $\mathcal{U}_B$, a maximal nilpotent subalgebra of the centralizer $\mathcal{C}_B$. In Section~\ref{Dsec}, we review the preliminaries. We first define the poset $\mathcal{D}_P$ and then recall the classic partition invariant $\lambda(P)=\lambda(\mathcal{D}_P)$ of the poset $\mathcal{D}_P$, defined in terms of the lengths of unions of chains in $\mathcal{D}_P$, and also the partition $\lambda_U(P)$ associated to the poset $\mathcal{D}_P$, defined and studied in \cite{IK} and \cite{uniqueness}.

In Section~\ref{min} (Theorem~\ref{Oblak min}) we define an invariant $\mu(P)$ of the partition $P$ and show that it is equal to  the smallest part of the partition $\lambda_U(P)$. In Section~\ref{min of lambda}, enumerating disjoint maximum antichains in $\mathcal{D}_P$, we prove that the smallest part of $\lambda(P)$ is also equal to $\mu(P)$ (Theorem~\ref{spread min part}). 

In section~\ref{Qmin}, using the main result of \cite{IK}, we prove that the smallest part of $Q(P)$ is the same as the smallest part of $\lambda(P)$ (Theorem~\ref{Q min part}). As a consequence we obtain an explicit formula independent of characteristic of $\k$ for $Q(P)$ when it has at most three parts (Corollary~\ref{3 parts}).
\\ 
\noindent{\bf Acknowledgement.} The author is grateful to A. Iarrobino for invaluable discussions on the topic, as well as for his comments and suggestions on the paper. The author is also thankful to Bart Van Steirteghem and Toma\v{z} Ko\v{s}ir for their thorough comments on an earlier version of this paper\footnote{This paper is a much revised version of results and proofs outlined in the last two sections of a preprint circulated early in 2012. (See arXiv:1202.6089v1.)}.

\section{Poset $\mathcal{D}_P$}\label{Dsec}

Let $n$ be a positive integer and  $P$ a partition of $n$. For a positive integer $p$, we denote by $n_p\geq 0$ the number of parts of size $p$ in $P$.

Assume that $B=J_P$, is the $n \times n$ Jordan matrix with Jordan type $P$, $\mathcal{C}_B$ its  centralizer and $\mathcal{J}$, the Jacobson radical of $\mathcal{C}_B$. Write $P=(p_s^{n_s}, \cdots, p_1^{n_1})$ with $p_s>\cdots>p_1$ and $n_i>0$, for all $i$. Then $\mathcal{C}_B/\mathcal{J}$ is semisimple, and is here isomorphic to the matrix algebra $\mathcal{M}_{n_s}(\k)\times \cdots \times \mathcal{M}_{n_1}(\k)$: there is a projection $\pi:\mathcal{C}_B\to \mathcal{M}_{n_s}(\k)\times \cdots \times \mathcal{M}_{n_1}(\k).$ (See \cite[Lemma 2.3]{Basili03}, \cite[Theorem 2.3]{BIK}, \cite[Theorem 6]{HW}.)

Note that $\mathcal{N}_B=\pi^{-1}(\mathcal{N}_s \times \cdots \times \mathcal{N}_1)$, where $\mathcal{N}_i$ is the subvariety of nilpotent matrices in $\mathcal{M}_{n_i}(\k)$. Let $\mathcal{U}_B=\pi^{-1}(\mathcal{U}_s \times \cdots \times \mathcal{U}_1)$, where $U_i$ is the subalgebra of $ \mathcal{M}_{n_i}(\k)$ consisting of strictly upper triangular matrices. It is easy to see that $\mathcal{U}_B$ is a maximal nilpotent subalgebra of $\mathcal{C}_B$ and that for any element $N \in \mathcal{N}_B$, there exists a unit $C \in \mathcal{C}_B$ such that $CNC^{-1} \in \mathcal{U}_B$ (see \cite[Lemma 2.2]{BIK}). Thus to find the Jordan partition of a generic element of $\mathcal{N}_B$ it is enough to study a generic element of $\mathcal{U}_B$.

Recall that if $v$ and $v'$ are elements of a poset, we say $v'$ {\it covers} $v$ if $ v<v'$ and there is no element $v''$ in the poset such that $v<v''< v'$. For a partition $P$ of $n$, we define a poset $\mathcal{D}_P$ through its covering edge diagram; a digraph with vertices corresponding to the elements of $\mathcal{D}_P$ and an edge from $v$ to $v'$ in if and only if $v'$ covers $v$ in $\mathcal{D}_P$. This poset, originally defined in \cite{BIK}, is  closely related to the algebra $\mathcal{U}_B$. In fact, the paths in the digraph of $\mathcal{D}_P$ correspond to the non-zero entries of a generic element of  $\mathcal{U}_B$. The study of  combinatorial properties of this poset has led to results on the partition $Q(P)$ (see \cite{Oblak}\footnote{Although the poset $\mathcal{D}_P$ was formally introduced later in \cite{BIK}, P. Oblak implicitly worked with it using a slightly different setting in \cite{Oblak}.}, \cite{BIK}, \cite{IK} and \cite{uniqueness}).

\begin{definition}\label{poset}

Let $P$ be a partition of $n$ and write $P=(p_s^{n_{p_s}}, \cdots, p_1^{n_{p_1}})$ such that $p_s>\cdots>p_1$  and $n_{p_i} > 0$ for $1 \leq i \leq s.$ The associated poset $\mathcal{D}_P$ is defined as follows. 

As a set, the poset $D_P$ consists of $n$ elements, each labeled by a triple $(u,p,k)\in \mathbb{N}^3$ such that $p\in\{p_1, \cdots, p_s\}$, $1\leq u \leq p$ and $1\leq k\leq n_p$. The covering relations in $\mathcal{D}_P$ are as follows:  $(v,p_j,\ell) \mbox{ covers } (u,p_i,k) \mbox{ if and only if one of the following holds.}$
\begin{itemize}
\item[($\alpha$)] If $j=i+1$, $v=u+p_{i+1}- p_{i}$, $\ell=1$ and $k=n_{p_i}$, 
\item[($\beta$)] If $j=i-1$, $v=u$, $\ell=1$ and $k=n_{p_i}$, 
\item[($e$)] If $j=i$, $v=u$ and $\ell=k+1$, or
\item[(${\omega}$)] If $j=i$,  $p_{i+1}-p_i>1$, $p_i-p_{i-1}>1$, $v=u+1$, $\ell=1$ and $k=n_{p_i}$.
\end{itemize}

We associate to this poset its covering edge digraph, which we also denote by $\mathcal{D}_P$ and we visualize it as follows.

$\bullet$ {\bf Vertices of }$\mathcal{D}_P$\\
For each $1 \leq i \leq t$, there are $n_{p_i}$ rows,  each consisting of $p_i$ vertices labeled by triples $(u, p_i, k)$ such that the first and last components of the triple are increasing when we go from left to right and from bottom to top, respectively. For each $1\leq i \leq s$, we refer to the set of all vertices $(u, p_i, k)$ as {\it level} $p_i$ (or the $i$-th level from bottom) of $\mathcal{D}_P$. (See Figure 1.)

$\bullet$ {\bf Covering edges of }$\mathcal{D}_P$\\
Each edge of the digraph, corresponding to a covering relation in the poset,  is one of the following.
\begin{itemize}
\item[$(\alpha)$] For $1\leq i< s$, and each $1\leq u \leq p_i$, the directed edge $\alpha_{i, i+1}(u)$  from the top vertex $(u,p_i,n_{p_i})$ of the $u$-th column of level $p_i$ to the bottom vertex $(u+p_{i+1}- p_{i}, p_{i+1}, 1)$ of the $(u+p_{i+1}- p_{i})$-th column in level $p_{i+1}$.

\item[$(\beta)$] For $1<i \leq s$, the directed edge $\beta_{p_{i}, p_{i-1}}(u)$ from the top vertex $(u,p_i,n_{p_i})$ of the $u$-th column of level $p_i$ to the bottom vertex $(u, p_{i-1}, 1)$ in the $u$-th column of level $p_{i-1}$. 

\item[$(e)$] For $1 \leq i \leq s$, $1 \leq u \leq p_i$ and $1 \leq k < n_{p_i}$, the upward directed edge $e_{{p_i,k}(u)}$ from $(u,p_i, k)$ to $(u,p_i, k+1)$ .

\item[$(\omega)$] For any isolated $p_i$ ({\it i.e.} $p_{i+1}-p_i>1$ and $p_i-p_{i-1}>1$) and any $1 \leq u <p_i$, the directed edge $\omega_{p_i}(u)$ from $(u, p_i, n_{p_i})$ to $(u+1, p_i, 1)$. (See Figure 2.)
\end{itemize}

\end{definition}
\begin{figure}[htb]
\begin{center}\label{ver}
\includegraphics[scale=.65]{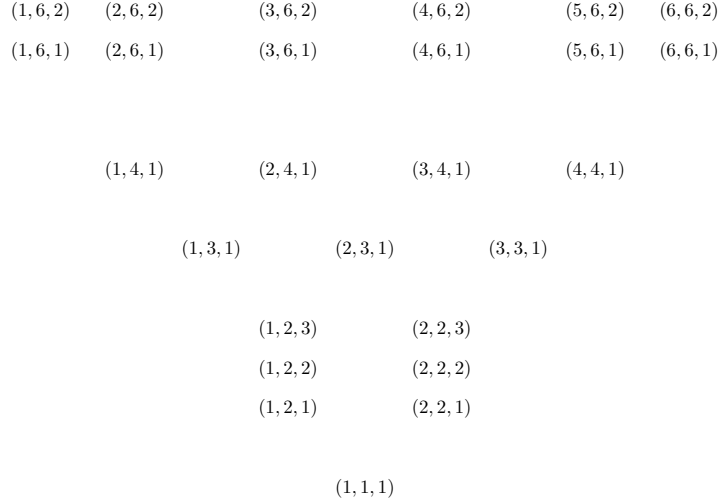}
\end{center}
\vspace{-3.5in}
\caption{Labeled vertices of the poset $\mathcal{D}_P$ for $ P=(6^2,4,3,2^3,1)$}
\end{figure}

\begin{figure}[htb]
\vspace{-.3in}
\begin{center}\label{edge}
\includegraphics[scale=.5]{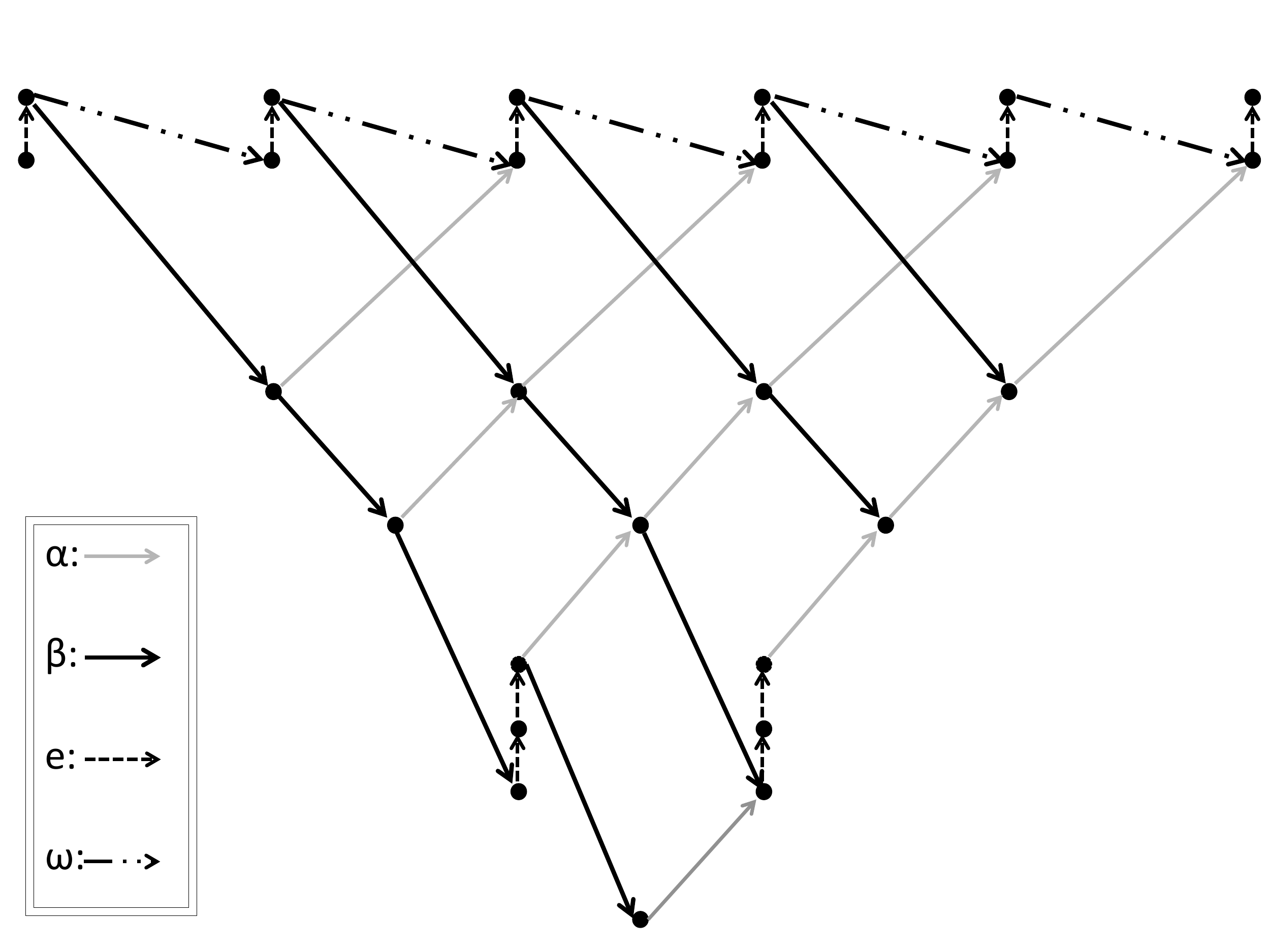}
\end{center}
\vspace{-0.5 in}
\caption{Covering edges in the poset $\mathcal{D}_P$ for $ P=(6^2,4,3,2^3,1)$}
\end{figure}

\begin{remark}\label{compare}
Let $(x,p,k)$ and $(y,q,\ell)$ be two elements of the poset $\mathcal{D}_P$. Then $(x,p,k) \leq (y,q,\ell)$ if and only if there exists a directed path in the diagram of $\mathcal{D}_P$ that goes from vertex $(x,p,k)$ to vertex $(y,q,\ell)$. Thus, by Definition~\ref{poset} we have  
$$\begin{array}{lll}
(x,p,k)\leq (y,q, \ell) \Leftrightarrow
&p<q \mbox{ and } q-p+x \leq y,\mbox{ or}\\
&p\geq q \mbox{ and } x\leq y,\mbox{ or}\\
&p=q, \, \, x=y \mbox{ and } k\leq \ell.
\end{array}$$
\end{remark}
Recall that for a partition $P$, the partition $Q(P)$ is defined as the Jordan type of a generic nilpotent matrix commuting with $B=J_P$. We associate two other partitions to $P$ in terms of its poset $\mathcal{D}_P$. 

First, the partition $\lambda(P)$, the classical partition associated to $\mathcal{D}_P$. Recall that a {\it chain} in a poset is defined as a totally ordered subset, whose {\it length} is its cardinality. In a finite poset, a chain is called {\it maximum} if there is no chain in the poset with a strictly larger cardinality.

\begin{definition}\label{lambda of P}
Let $P$ be a partition and $\mathcal{D}_P$ the associated poset. The partition $\lambda(P)=(\lambda_1, \lambda_2 \cdots )$ is defined such that $\lambda_k=c_k-c_{k-1}$, where $c_k$ is the maximum cardinality of a union of $k$ chains in $\mathcal{D}_P,$ for $k=0,1\cdots.$ 
\end{definition}

Next, we recall the definition and properties of the partition $\mathcal{\lambda}_U(P)$, associated to $\mathcal{D}_P$ with a similar process as in the definition of $\lambda(P)$, but only considering certain types of chains closely related to almost rectangular subpartitions of $P$ (see \cite{uniqueness}). Recall that an {\it almost rectangular} partition is a partition whose largest and smallest parts differ by at most 1.

\begin{definition}\label{general U chain}
Let $P$ be a partition of $n$. For a positive integer $r$ and a set $\frak{A}=\{a_1,a_1+1, \cdots , a_r,a_r+1\}\subset\mathbb{N}$ such that $a_1<a_1+1 <\cdots< a_r< a_{r}+1$, we define the {\it $r$-$U$-chain} $U_{\frak{A}}$ as follows: 
$$\begin{array}{c}
U_{\frak{A}}=\cup_{i=1}^{r}S_{{\frak{A}};i},\mbox{ where}\\
S_{{\frak{A}};i}=\{(u,p,k)\in \mathcal{D}_P\,\mid \, p\in\{a_i, a_i+1\} \mbox{ and } i\leq u \leq p-i+1\}\\
\cup \{(u,p,k)\in \mathcal{D}_P\,\mid \, p>a_i+1 \mbox{ and } u\in\{i,p-i+1\} \}.
\end{array}$$
Note that each $S_{{\frak{A}},i}$ is a chain in $\mathcal{D}_P$ and that $S_{{\frak{A}},i}\cap S_{{\frak{A}},j}=\emptyset$ if $i\neq j$. A $1$-$U$-chain is called a {\it simple $U$-chain}. A {\it maximum} simple $U$-chain is a simple $U$-chain with the maximum cardinality among all simple $U$-chains in $\mathcal{D}_P.$
\end{definition}

\begin{figure}[htb]\label{U_chain}
\begin{tabular}{cc}
\includegraphics[scale=.29]{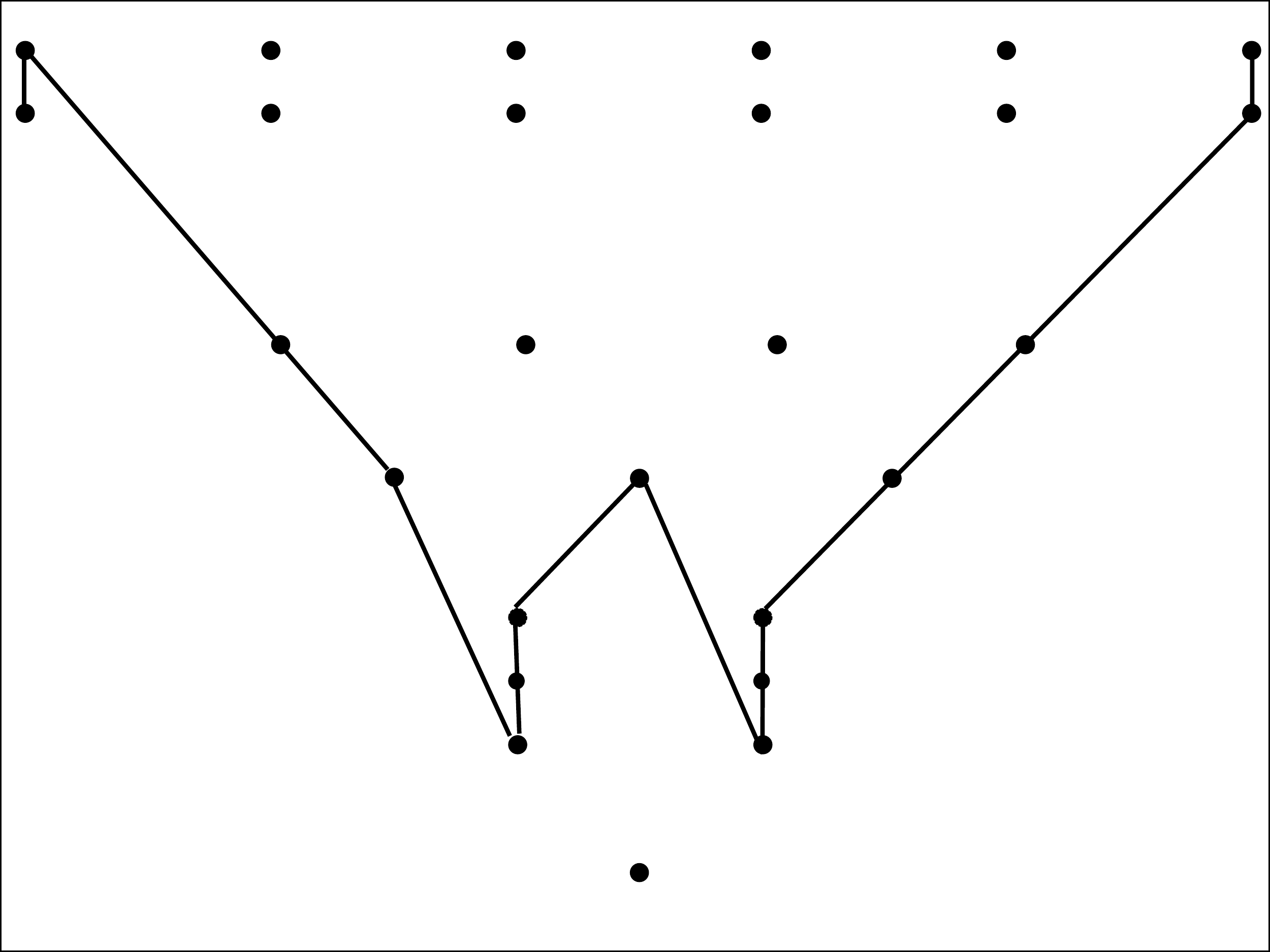}&\includegraphics[scale=.29]{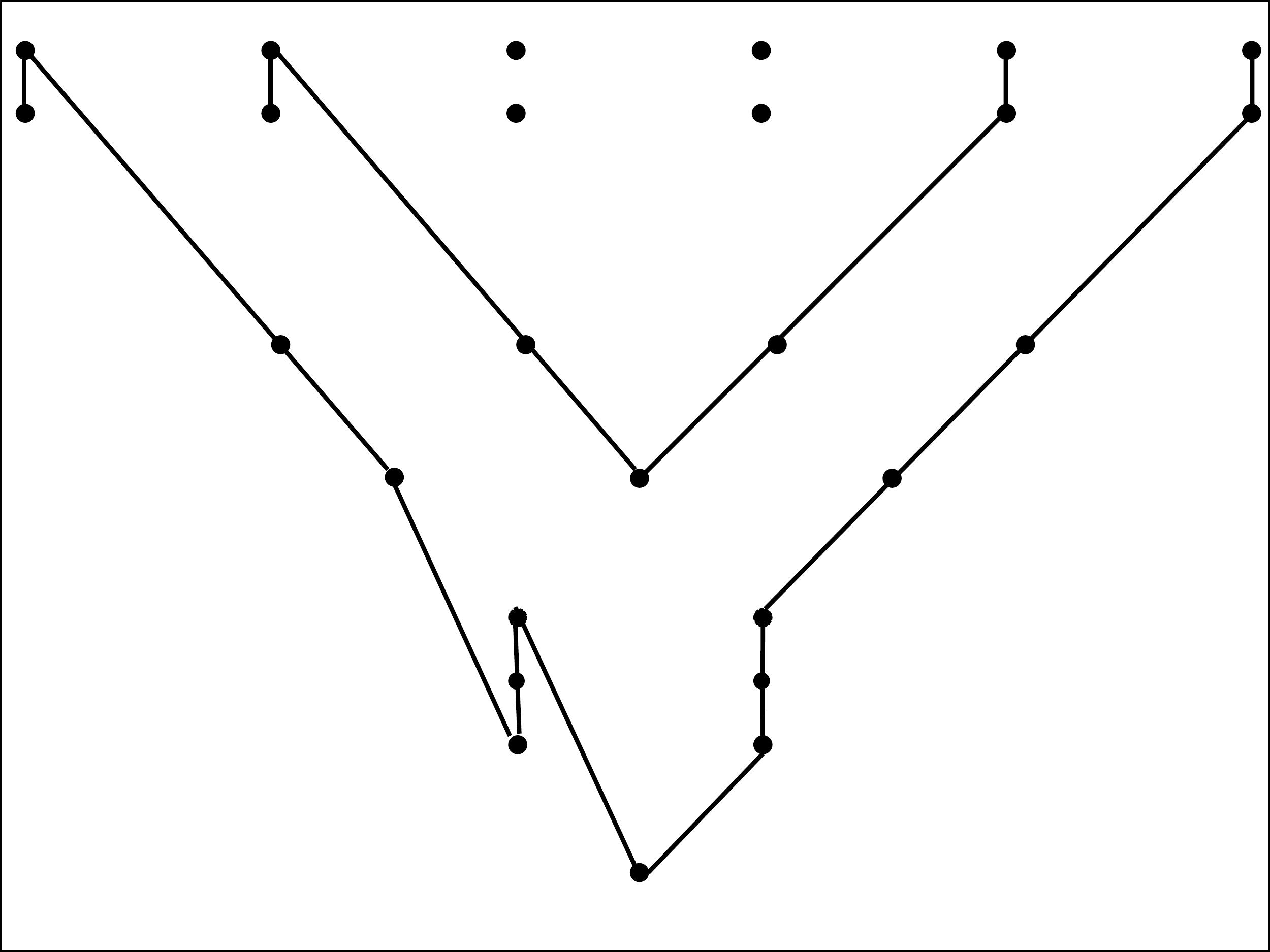}
\end{tabular}
\caption{$U_{\{2,3\}}$ and $U_{\{1,2,3,4\}}$  in the poset $\mathcal{D}_P$ for $ P=(6^2,4,3,2^3,1)$}
\end{figure}

By the definition above, if $\frak{A}=\{a,a+1\}$, then the length of the corresponding simple $U$-chain $U_{\frak{A}}$ is 
\begin{equation}\label{simple}
\mid U_\frak{A}\mid=a\, n_{a}+(a+1)\,n_{a+1}+2\, \displaystyle{\sum_{p>a+1}n_p}.
\end{equation}

Now we are ready to define $\lambda_U(P)$, the third partition associated to $P$.
\begin{definition}\label{lambda U}
Let $P$ be a partition and $\mathcal{D}_P$ the associated poset. The partition $\lambda_U(P)=(\lambda_{U,1}, \lambda_{U,2}, \cdots)$ is defined such that $\lambda_{U,k}=u_{k}-u_{k-1},$ where $u_k$ is the maximum cardinality of a $k$-$U$-chain in $\mathcal{D}_P$, for $k=0, 1, \cdots$.

\end{definition}

\begin{example} Let $P=(6^2,4,3,2^3,1)$. Then 
$$\begin{array}{l}
u_1=15=|U_{\{1,2\}}|=|U_{\{2,3\}}|,\\
u_2=23=|U_{\{1,2,5,6\}}|=|U_{\{2,3,5,6\}}|, \mbox{ and }\\
u_3=26=|U_{\{1,2,3,4,5,6\}}|=|\mathcal{D}_P|.
\end{array}$$

So we have $\lambda_U(P)=(15,8,3).$
\end{example}

\begin{definition}\label{dominance order}{\bf(Dominance partial order)}
Let $P$ and $Q$ be partitions of $n$. Here we write $P=(p_1, p_2, \cdots)$ and  $Q=(q_1, q_2, \cdots)$ with $p_1\geq p_2\geq \cdots$ and $q_1\geq q_2\geq \cdots$. Then $P \leq Q$ if and only if for any $k\in \mathbb{N}$, we have $\displaystyle{\sum_{i=1}^{k}p_i \leq \sum_{i=1}^{k}q_i}.$
\end{definition}
\bigskip

It is obvious from the definitions that for any partition $P$, $\lambda_U(P) \leq \lambda(P).$ In \cite[Theorem 3.9]{IK}, we prove that the inequality $\lambda_U(P) \leq Q(P)$ also holds.

\section{Smallest part of $\lambda_U(P)$}\label{min}

In this section we define a numerical invariant, $\mu(P)$, associated to a partition $P$, and show that it is the smallest part of the partition $\lambda_U(P)$. In the next two sections we prove that the smallest part of the partition $\lambda(P)$ and also the smallest part of $Q(P)$ are equal to $\mu(P)$ as well.

\begin{definition}
Let $s$ be a positive integer. A partition $P=(p_s^{n_s}, \cdots, p_1^{n_1})$ of $n$ such that $p_1>0$, $p_{i+1}=p_{i}+1$, for $1\leq i < s$ and $n_i>0$ for $1\leq i \leq s$, is called an $s$-spread of $n$, or simply a {\it spread}. If $s\leq 2$ then $P$ is called an {\it almost rectangular} partition. In other words, $P$ is almost rectangular if its biggest and smallest part differ by at most 1.

\end{definition}
\begin{notation}
For a rational number $a\in \mathbb{Q}$, $\lceil a \rceil$ denotes the smallest integer greater than or equal to $a$.

\end{notation}
\begin{definition}\label{rP}
For an arbitrary partition $P$, let $r(P)$ denote the minimum number of almost rectangular subpartitions of $P$ appearing in a decomposition of $P$ as a union of almost rectangular subpartition. Note that if $P$ is an $s$-spread then $r(P)=\lceil\frac{s}{2}\rceil$.
\end{definition}

By \cite[Proposition 2.4]{Basili03} and \cite[Theorem 2.17]{BIK}, for any partition $P$, the partition $Q(P)$ has exactly $r(P)$ parts. By Definitions~\ref{general U chain} and~\ref{lambda U}, it is also clear that $\lambda_U(P)$ has $r(P)$ parts. In Section~\ref{min of lambda}, we will show that $\lambda(P)$ has $r(P)$ parts as well. 

For a partition $P$, we write $\lambda_U(P)=(\lambda_{U,1}(P), \cdots, \lambda_{U,r(P)}(P))$ as in Definition~\ref{lambda U}. In particular, $\min(\lambda_U(P))=\lambda_{U,r(P)}(P).$

The following proposition gives a relation between the smallest part of a partition and the smallest parts of its subpartitions. We will use this in Definition~\ref{mu defn} and in the proof of Theorem~\ref{Oblak min}. 

\begin{notation}
Let $P$ be a partition and $\ell$ a positive integer smaller than the smallest part of $P$. We write $P-\ell$ to denote the partition obtained from $P$ by subtracting $\ell$ from each part of it. 
\end{notation}
\begin{proposition}\label{U min combine}
Let $P$ and $Q$ be two partitions. Let $P=(p_s^{n_s}, \cdots, p_1^{n_1})$ such that $p_s>\cdots >p_1$ and $n_i>0$ for all $1\leq i \leq s$, and $Q=(q_t^{m_t}, \cdots, q_1^{m_1})$ such that $q_t> \cdots >q_1$ and $m_i>0$ for all $1\leq i \leq t$. If $q_1 \geq p_s+2$ and $R=(Q,P)$ then $$\min(\lambda_U(R))=\min\{\min(\lambda_U(P)),\min(\lambda_U(Q-2r(P)))\}.$$ 
\end{proposition}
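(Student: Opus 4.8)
The plan is to translate the statement into one about how few vertices a large $U$-chain of $\mathcal{D}_R$ can leave uncovered, and then to exploit that the gap $q_1\geq p_s+2$ forces every $U$-chain to split cleanly into a part living over $P$ and a part living over $Q$. Throughout I write $u_k(P)$ and $\lambda_{U,k}(P)$ for the quantities of Definition~\ref{lambda U} relative to $\mathcal{D}_P$. First I record the consequences of the hypothesis: since no almost rectangular subpartition of $R=(Q,P)$ can meet both $P$ and $Q$, we get $r(R)=r(P)+r(Q)$, so $\min(\lambda_U(R))=u_{r(R)}(R)-u_{r(R)-1}(R)$. Using $u_{r(R)}(R)=|\mathcal{D}_R|=|P|+|Q|$ this becomes
\begin{equation*}
\min(\lambda_U(R))=\min_{U}\bigl|\mathcal{D}_R\setminus U\bigr|,
\end{equation*}
the minimum being over all $(r(R)-1)$-$U$-chains $U$. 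The same reformulation applies to $P$ and to $Q-2r(P)$, so the goal becomes a statement purely about minimal uncovered sets. I also note that $q_1\geq p_s+2\geq 2r(P)+1$ (using $p_s\geq 2r(P)-1$), which keeps the posets $\mathcal{D}_{Q-2d}$ below well defined.

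Next I would prove the decomposition lemma, the engine of the argument. In a set $\frak{A}$ defining a $U$-chain of $\mathcal{D}_R$, each pair $\{a_i,a_i+1\}$ has both levels $\leq p_s$ or both $\geq q_1$, since $a_i\leq p_s<q_1\leq a_i+1$ would force $a_i\geq p_s+1$. So the pairs split into $d$ \emph{lower} pairs (positions $1,\dots,d$) and the rest \emph{upper}. By Definition~\ref{general U chain} the lower pairs cover a $d$-$U$-chain inside $\mathcal{D}_P$ together with exactly the $2d$ outermost columns (all rows) of every level of $Q$, while the upper pairs, after those $2d$ columns are deleted, realize a chain inside the inner poset $\mathcal{D}_{Q-2d}$. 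Since the $2d$ outer columns contribute $2d\,m$ vertices with $m=\sum_{i=1}^t m_i$ the number of parts of $Q$, maximizing cardinality decouples into the two regions, giving, with $k=r(R)-1$,
\begin{equation*}
\min_{U\,:\,d\text{ lower pairs}}\bigl|\mathcal{D}_R\setminus U\bigr|=\bigl(|\mathcal{D}_P|-u_d(P)\bigr)+\bigl(|\mathcal{D}_{Q-2d}|-u_{k-d}(Q-2d)\bigr).
\end{equation*}

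I would then evaluate the two natural splits. For $d=r(P)$ the lower pairs fill $\mathcal{D}_P$, so the first term is $0$ and the second is the least deficiency of an $(r(Q)-1)$-$U$-chain of $\mathcal{D}_{Q-2r(P)}$, namely $\min(\lambda_U(Q-2r(P)))$. For $d=r(P)-1$ the upper pairs form a full $r(Q)$-$U$-chain of $\mathcal{D}_{Q-2r(P)+2}$, so the second term is $0$ and the first is $\min(\lambda_U(P))$. These two admissible chains give $\min(\lambda_U(R))\leq\min\{\min(\lambda_U(P)),\min(\lambda_U(Q-2r(P)))\}$. For the reverse inequality I bound the uncovered set below by the number $d$ of lower pairs: for $d\leq r(P)-1$ the $Q$-region is fully coverable and the $P$-deficiency is $\sum_{j>d}\lambda_{U,j}(P)\geq\min(\lambda_U(P))$; for $d=r(P)$ the $Q$-deficiency is $\geq\min(\lambda_U(Q-2r(P)))$ by definition; and for $d\geq r(P)+1$ the $P$-region is already full and the $Q$-deficiency is the sum of the $d-r(P)+1$ smallest parts of $\lambda_U(Q-2d)$.

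The main obstacle is this last case. It amounts to showing that the least deficiency of an $(r(Q)-1-e)$-$U$-chain of $\mathcal{D}_{Q-2r(P)-2e}$ is at least $\min(\lambda_U(Q-2r(P)))$, where $e=d-r(P)\geq 1$; equivalently, writing $Q'=Q-2r(P)$ and $r'=r(Q)$, that $u_{r'-1}(Q')\geq 2e\,m+u_{r'-1-e}(Q'-2e)$. I would prove it by the expected embedding: starting from an optimal $(r'-1-e)$-$U$-chain of the inner poset $\mathcal{D}_{Q'-2e}$ (obtained from $\mathcal{D}_{Q'}$ by deleting the $e$ outermost columns on each side), I adjoin $e$ new lower pairs at positions $1,\dots,e$ that sweep up precisely those $2e$ outer columns, producing an $(r'-1)$-$U$-chain of $\mathcal{D}_{Q'}$ with the same uncovered set. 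The delicate point, and the step I expect to cost the most care, is placing these $e$ pairs so that their own levels do not collide with the inner chain; I would arrange this by choosing their levels in the gap below the smallest part of $Q'$ when there is room, and by a direct disjointness check when that room is tight. Granting the decomposition lemma, every remaining estimate is a monotonicity fact about $\lambda_U$ (its parts weakly decrease and $u_k$ stabilizes at $|\mathcal{D}|$ once $k\geq r$), and combining the upper and lower bounds yields the asserted equality.
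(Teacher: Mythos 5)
Your proposal is correct, and its skeleton --- rewriting $\min(\lambda_U(-))$ as the least deficiency $|\mathcal{D}\setminus U|$ of an $(r-1)$-$U$-chain, splitting the pairs of any chain in $\mathcal{D}_R$ into lower and upper ones across the gap $q_1\geq p_s+2$, and extracting the upper bound from the two splits $d=r(P)$ and $d=r(P)-1$ --- matches the paper's proof; your two extremal configurations are exactly the chains $U_{\frak{B}}\supseteq\mathcal{D}_P$ and $\bar{U}\cup U_{\frak{Q}}\supseteq\mathcal{D}_Q$ constructed there. Where you genuinely diverge is the converse inequality. The paper takes a \emph{maximum} $(r(R)-1)$-$U$-chain $U_{\frak{A}}$, asserts from $r(R)=r(P)+r(Q)$ and maximality that $\mathcal{D}_P\subseteq U_{\frak{A}}$ or $\mathcal{D}_Q\subseteq U_{\frak{A}}$, and then restricts: $\mathcal{D}_{\bar{Q}}\cap U_{\frak{A}}$ (resp.\ $\mathcal{D}_P\cap U_{\frak{A}}$) is an $(r(Q)-1)$- (resp.\ $(r(P)-1)$-) $U$-chain of the subposet, giving the bound in one line. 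You instead bound the deficiency of \emph{every} chain by cases on the number $d$ of lower pairs, which forces you to treat the over-saturated case $d\geq r(P)+1$ and to prove the monotonicity $u_{r'-1}(Q')\geq 2em+u_{r'-1-e}(Q'-2e)$; that lemma is precisely the mirror image of the paper's restriction step, in which the $d-r(P)$ surplus lower pairs reappear as ``degenerate'' pairs of the restricted chain in $\mathcal{D}_{\bar{Q}}$ covering only its outermost columns. What your route buys is self-containedness: the paper's dichotomy is stated without proof, and verifying it really does require your decomposition together with the fact that $u_d(P)=|\mathcal{D}_P|$ once $d\geq r(P)$; your version makes all of that explicit, at the cost of length. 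Two small repairs to your write-up: (i) the dichotomy ``both levels $\leq p_s$ or both $\geq q_1$'' overlooks pairs lying entirely in the gap (possible when $q_1\geq p_s+3$); such pairs cover only outer columns of the $Q$-levels, so they should simply be counted among the lower pairs, with $P$-coverage still bounded by $u_d(P)$, and nothing breaks; (ii) the ``tight room'' you fear in the embedding lemma never occurs: since $Q'-2e$ has positive parts, the levels $1,\dots,2e$ of $\mathcal{D}_{Q'}$ carry no vertices, so the added pairs $\{1,2\},\dots,\{2e-1,2e\}$ always sit strictly below the shifted inner chain and sweep exactly the $2e$ outer columns, unconditionally.
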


\begin{proof}
Let $\bar{Q}=Q-2r(P)$. Throughout the proof, we identify elements of $\mathcal{D}_{\bar{Q}}$ with elements of $\mathcal{D}_R$ via the relabeling map $\iota:\mathcal{D}_{\bar{Q}}\to \mathcal{D}_R$ defined by $\iota((u,p,k))=(u+r(P),p+2r(P),k).$

First note that since $q_1\geq p_s+2$, we have $r(R)=r(P)+r(Q)$ and $$q_1-2r(P)\geq p_s+2-2r(P)\geq p_1+s+1-2r(P)\geq p_1 >0. $$ Thus $r(Q)=r(\bar{Q})$ and therefore $r(R)=r(P)+r(\bar{Q}).$

Assume that $1\leq k \leq r(Q)$ and let $U_{\bar{\frak{A}}}$ be a $k$-$U$-chain in $\mathcal{D}_{\bar{Q}}$. Let $\frak{B}=\{a+2r(P)\,|\, a\in \bar{\frak{A}}\}\cup \{p_1, \cdots, p_{2r(P)}\}$. Then $U_{\frak{B}}$ is an $(r(P)+k)$-$U$-chain in $\mathcal{D}_R$ that contains $\mathcal{D}_P$. We also have $\mathcal{D}_R\setminus U_\frak{B}=\mathcal{D}_{\bar{Q}}\setminus U_{\bar{\frak{A}}}.$ So if $U_{\bar{\frak{A}}}$ has maximum cardinality among all $k$-$U$-chains in $\mathcal{D}_{\bar{Q}}$, then by Definition~\ref{lambda U} we get

\begin{equation}
\displaystyle{\sum_{i=r(P)+k+1}^{r(P)+r(Q)}\lambda_{U,i}(R)}\, \leq \, \mid \mathcal{D}_R\setminus U_\frak{B}\mid = \mid \mathcal{D}_{\bar{Q}}\setminus U_{\bar{\frak{A}}}\mid\,  = \displaystyle{\sum_{i=k+1}^{r(Q)}\lambda_{U,i}(\bar{Q})}.
\end{equation}

In particular, setting $k=r(Q)-1$, we get $\min(\lambda_U(R))\leq \min(\lambda_U(Q)).$

On the other hand, any $\ell$-$U$-chain $U$ in $\mathcal{D}_P$ can be extended to an $\ell$-$U$-chain $\bar{U}$ in $\mathcal{D}_R$ by adding the first and the last $r(P)$ vertices of each row of $\mathcal{D}_{Q}.$ So if $\frak{Q}=\{q_1, \cdots, q_{2r(Q)}\}$ then $\bar{U}\cup U_\frak{Q}$ is an $(\ell+r(Q))$-$U$-chain in $\mathcal{D}_R$ that contains $\mathcal{D}_{Q}$, and we also have $$\mathcal{D}_R\setminus(\bar{U}\cup U_\frak{Q})=\mathcal{D}_P\setminus U.$$ Thus, if $U$ has maximum cardinality among all $\ell$-$U$-chains in $\mathcal{D}_P$ then we get
\begin{equation}\sum_{i=\ell+r(Q)+1}^{r(P)+r(Q)}\lambda_{U,i}(R) \leq \mid \mathcal{D}_R\setminus(\bar{U}\cup U_\frak{Q})\mid = \mid \mathcal{D}_P\setminus U \mid = \displaystyle{\sum_{i=\ell+1}^{r(P)}\lambda_{U,i}(P)}.\end{equation} In particular, setting $\ell=r(P)-1$, we get $\min(\lambda_U(R))\leq \min(\lambda_U(P)).$

Thus \begin{equation} \min(\lambda_U(R)) \leq \min\{\min(\lambda_U(P)),\min(\lambda_U(\bar{Q}))\}.\end{equation}

Conversely, suppose that $U_\frak{A}$ is an $(r(R)-1)$-$U$-chain in $\mathcal{D}_R$ with maximum cardinality. Since $r(R)=r(P)+r(Q)$, either $\mathcal{D}_P\subseteq U_\frak{A}$ or $\mathcal{D}_Q\subseteq U_\frak{A}$.

If $\mathcal{D}_P\subseteq U_\frak{A}$, then $(\mathcal{D}_{\bar{Q}} \cap U_\frak{A})$ is an $(r(Q)-1)$-$U$-chain in $\mathcal{D}_{\bar{Q}}$. So, by Definition~\ref{lambda U} and the choice of $U_\frak{A}$, we have $\min(\lambda_U(\bar{Q})) \leq |\mathcal{D}_{\bar{Q}}\setminus U_\frak{A}|=|\mathcal{D}_R\setminus U_\frak{A}|=\min(\lambda_U(R)).$

If $\mathcal{D}_Q\subseteq U_\frak{A}$, then $(\mathcal{D}_P \cap U_\frak{A})$ is an $(r(P)-1)$-$U$-chain in $\mathcal{D}_P$. So $$\min(\lambda_U(P)) \leq |\mathcal{D}_P\setminus U_\frak{A}|=|\mathcal{D}_R\setminus U_\frak{A}|=\min(\lambda_U(R)).$$

This completes the proof of the proposition.
\end{proof}

We now define our key invariant $\mu(P)$ that we will show is equal to the smallest part of all three partitions $\lambda_U(P)$, $\lambda(P)$, and $Q(P)$.  

\begin{definition}\label{mu defn}
Let $P$ be an $s$-spread with the smallest part $p$, so we can write $P=\left((p+s-1)^{n_s}, \cdots, p^{n_1}\right)$, such that $n_i>0$ for $1\leq i\leq s$. Setting $n_{s+1}=0$, we define \begin{equation}\label{mu  spread equi}\mu(P)=\min \{p \, n_{2i-1}+(p+1) n_{2j} \, | \, 1 \leq i \leq j \leq r(P) \}.\end{equation}

Note that if $s$ is an odd number, then $r(P)=\frac{s+1}{2}$ and therefore $n_{2r(P)}=0$. Thus 
\begin{equation}\label{mu od}
\mu(P)=p\cdot \min \{ \, n_{2i-1} \, | \, 1 \leq i \leq r(P) \}.
\end{equation}

Now let $P$ be an arbitrary partition. We can write $P=P_\ell \, \cup \cdots \cup \, P_1$ such that each $P_k$ is a spread and the biggest part of $P_k$ and is less than or equal to the smallest part of $P_{k+1}$ minus two. 
For $k=1, \cdots, \ell$, let $\bar{r}_k=\displaystyle{\sum_{i=1}^{k-1}r(P_i)}$. We define \begin{equation}\mu(P)=\min\{\mu(P_k-2\bar{r}_k)\,|\, 1\leq k \leq \ell\}.\end{equation}

\end{definition}

The following lemma is an immediate consequence of Definition~\ref{mu defn}.

\begin{lemma}\label{mu union} 
Let $P$ and $P'$ be two partitions such that the largest part of $P$ is smaller than the smallest part of $P'$ minus 1. Then $\mu(P'\, \cup P)=\min\{\mu(P), \mu(P'-2r(P))\}$.
\end{lemma}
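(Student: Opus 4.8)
The plan is to derive Lemma~\ref{mu union} directly from the definition of $\mu$ for arbitrary partitions given in Definition~\ref{mu defn}, by comparing the spread decomposition of $P' \cup P$ with those of $P$ and $P'$ separately. First I would decompose $P = P_\ell \cup \cdots \cup P_1$ into spreads as in the definition, with each $P_k$ having largest part at most the smallest part of $P_{k+1}$ minus two. Since the largest part of $P$ is smaller than the smallest part of $P'$ minus $1$ — hence at most the smallest part of $P'$ minus $2$ — the combined partition $P' \cup P$ decomposes as $P' \cup P_\ell \cup \cdots \cup P_1$, where the blocks $P_\ell, \ldots, P_1$ are exactly the spread pieces of $P$ and the top piece is $P'$ (itself possibly a union of spread pieces $P'_{\ell'} \cup \cdots \cup P'_1$, but this causes no difficulty). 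The key bookkeeping point is that the offset $\bar r_k$ for each piece in the decomposition of $P' \cup P$ agrees with the offset used in the decomposition of $P$ for the pieces coming from $P$.

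Concretely, I would track the partial-offset quantities $\bar r_k = \sum_{i=1}^{k-1} r(P_i)$. For the spread pieces $P_1, \ldots, P_\ell$ that make up $P$, these offsets are identical whether computed inside $P$ or inside $P' \cup P$, because in both cases the same lower pieces precede them. Therefore the terms $\mu(P_k - 2\bar r_k)$ for $1 \le k \le \ell$ appearing in the formula for $\mu(P' \cup P)$ are precisely the terms appearing in $\mu(P)$, so their minimum is exactly $\mu(P)$. For the pieces making up $P'$, each sits above all of $P$ in the combined decomposition, so its offset is shifted by the total contribution $\sum_{i=1}^{\ell} r(P_i) = r(P)$ of the $P$-pieces; that is, the offset of a $P'$-piece $P'_j$ inside $P' \cup P$ equals its offset inside $P'$ plus $r(P)$. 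Consequently each term $\mu(P'_j - 2\bar r)$ from $P'$ becomes $\mu(P'_j - 2(\bar r + r(P)))$, and assembling these gives exactly $\mu(P' - 2r(P))$ by the same definitional formula applied to $P'$ with a uniform extra shift of $2r(P)$.

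Combining the two groups of terms, the overall minimum in $\mu(P' \cup P)$ splits as $\min\{\mu(P),\, \mu(P'-2r(P))\}$, which is the claimed identity. The main thing to verify carefully — and where I expect the only real friction — is the offset compatibility: that subtracting $2r(P)$ uniformly from all parts of $P'$ and then taking its own spread decomposition yields the same collection of shifted terms as directly applying the offsets $\bar r_k$ inside the combined decomposition. This reduces to the elementary observation that $2(\bar r^{P'}_j + r(P)) = 2\bar r^{\text{combined}}_j$ and that subtracting a uniform constant from every part of a spread commutes with forming the $\mu$-term $p\,n_{2i-1} + (p+1)n_{2j}$ in the obvious way (it merely decrements the base part $p$). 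Since all of this is bookkeeping that follows immediately from Definition~\ref{mu defn}, the lemma is ``an immediate consequence'' as asserted, and no deeper structural input about $\mathcal{D}_P$ is required.
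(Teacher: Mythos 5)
Your proposal is correct and takes essentially the same route as the paper: the paper offers no separate argument, asserting the lemma ``is an immediate consequence of Definition~\ref{mu defn}'', and your bookkeeping --- identifying the spread decomposition of $P'\cup P$ as the concatenation of those of $P$ and $P'$, and checking that the offsets $\bar r_k$ agree for the $P$-pieces and shift uniformly by $r(P)$ for the $P'$-pieces --- is exactly that consequence written out. The one small point you should make explicit is that the gap hypothesis forces $2r(P)$ to be smaller than the smallest part of $P'$ (compare the estimate $q_1-2r(P)\geq p_1>0$ in the proof of Proposition~\ref{U min combine}), so that $P'-2r(P)$ is a genuine partition whose spread decomposition is the termwise shift of that of $P'$.
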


\begin{remark}
Note that in general $\mu(P)-r\neq \mu(P-r).$ For example, let $P=(4,3^2)$ and $r=2.$ Then $\mu(P)=10$ and $\mu(P-r)=\mu((2,1^2))=4$. 
\end{remark}

\begin{example}
Let $P=(11,10,9^2,8,6,5,3,2,1^2)$. Then $P=(P_3,P_2,P_1)$, where $$P_1=(3,2,1^2), 
P_2=(6,5), \mbox{ and }
P_3=(11,10,9^2,8).$$
We have $r(P_1)=2$,  $r(P_2)=1$ and $r(P_3)=2$ and therefore $\bar{r}_1=0$, $\bar{r}_2=2$ and $\bar{r}_3=3$. Thus  $\mu(P)=\min\{\mu(P_1), \mu(P_2-4), \mu(P_3-6)\}$. 

By definition
$$\begin{array}{lll}
\mu(P_1)&=\mu((3,2,1^2))&=(1) \cdot \min\{2,1\}=1, \\
\mu(P_2-4)&=\mu((2,1))&=\min\{(1)(1)+(2)(1)\}=3, \mbox{ and} \\
\mu(P_3-6)&=\mu((5,4,3^2,2))&=\min\{(2)(1)+(3)(2), (2)(1)+(3)(1), (2)(1)+(3)(1)\}= 5.
\end{array}$$
Therefore $\mu(P)=1$.

\end{example}

\begin{theorem}\label{Oblak min}
For any partition $P$ of $n$,  $\min(\lambda_U(P))=\mu(P).$
\end{theorem}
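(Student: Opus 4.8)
The plan is to prove $\min(\lambda_U(P)) = \mu(P)$ by reducing from an arbitrary partition to the spread case, and then handling the spread case by a direct chain-counting argument. The key structural tool will be Proposition~\ref{U min combine}, which controls how $\min(\lambda_U)$ behaves under the union operation, matched against Lemma~\ref{mu union}, which is the exact analogue for $\mu$. Since both $\min(\lambda_U(-))$ and $\mu(-)$ satisfy the same recursion under taking unions of well-separated spreads (largest part of one block at least two less than the smallest part of the next), I would first record that the theorem reduces to the case where $P$ is a single spread.

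\emph{Reduction step.} Write $P = P_\ell \cup \cdots \cup P_1$ as in Definition~\ref{mu defn}, with each $P_k$ a spread and the blocks separated so that Proposition~\ref{U min combine} applies. I would induct on $\ell$. For $\ell = 1$ there is nothing to reduce. For the inductive step, set $R = P$, let $P$ (in the notation of Proposition~\ref{U min combine}) be the bottom block $P_1$ and let $Q$ be the union of the remaining blocks, so that $q_1 \geq p_s + 2$ holds by the separation hypothesis. Proposition~\ref{U min combine} then gives
\begin{equation}\label{prop-app}
\min(\lambda_U(R)) = \min\{\min(\lambda_U(P_1)),\, \min(\lambda_U(Q - 2r(P_1)))\}.
\end{equation}
By the inductive hypothesis applied to $Q - 2r(P_1)$ (which has fewer spread blocks) together with the shift bookkeeping encoded in the $\bar r_k$, the right side of \eqref{prop-app} equals $\min\{\mu(P_1),\, \mu(Q - 2r(P_1))\}$, and Lemma~\ref{mu union} rewrites this precisely as $\mu(R)$. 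So the whole statement follows once the spread case is established.

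\emph{The spread case.} Let $P = ((p+s-1)^{n_s}, \ldots, p^{n_1})$ be an $s$-spread, so $r(P) = \lceil s/2 \rceil$ and $\lambda_U(P)$ has $r(P)$ parts; by the remark before the theorem $\min(\lambda_U(P)) = \lambda_{U,r(P)}(P) = u_{r(P)} - u_{r(P)-1} = |\mathcal{D}_P| - u_{r(P)-1}$. The task is therefore to identify the maximum cardinality $u_{r(P)-1}$ of an $(r(P)-1)$-$U$-chain and to show $|\mathcal{D}_P| - u_{r(P)-1} = \mu(P)$. I would parametrize the $U$-chains via their defining sets $\frak{A}$ as in Definition~\ref{general U chain}, and compute the complement $\mathcal{D}_P \setminus U_{\frak A}$ explicitly: each nested pair $\{a_i, a_i+1\}$ of an $(r(P)-1)$-$U$-chain "peels off" two of the $r(P)$ almost-rectangular strips, and minimizing the leftover amounts to choosing which single strip-index-pair $(2i-1, 2j)$ is left uncovered. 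Using the simple-$U$-chain length formula \eqref{simple} and summing over the nested structure, the leftover count collapses to an expression of the form $p\,n_{2i-1} + (p+1)\,n_{2j}$ with $1 \le i \le j \le r(P)$, and the maximum $U$-chain corresponds to minimizing this over all admissible $(i,j)$ — exactly formula \eqref{mu spread equi}. The odd-$s$ simplification \eqref{mu od} drops out because $n_{2r(P)} = 0$ there.

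\emph{Main obstacle.} The routine part is the reduction; the delicate part is the spread computation, specifically verifying that an $(r(P)-1)$-$U$-chain of maximum cardinality really does leave uncovered exactly one block of the form $p\,n_{2i-1} + (p+1)\,n_{2j}$, and that the constraint $i \le j$ (rather than arbitrary pairs) is forced by the geometry of how $S_{\frak{A};i}$ chains can be disjointly packed across the $s$ levels. I expect the core difficulty to be a careful inductive or counting argument on $s$ showing that no cleverer choice of nested intervals $\frak A$ can cover more vertices than the greedy strip-pairing does — i.e., establishing the optimality of the proposed maximum $U$-chain, as opposed to merely exhibiting a candidate. Handling the parity of $s$ and the boundary strips cleanly will require keeping careful track of which rows contribute $p$ versus $p+1$ vertices to each $S_{\frak{A};i}$.
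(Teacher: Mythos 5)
Your proposal follows essentially the same route as the paper's own proof: reduce to the spread case by induction on the spread blocks via Proposition~\ref{U min combine} (matched with Definition~\ref{mu defn}/Lemma~\ref{mu union}), then compute the complement of a maximum $(r(P)-1)$-$U$-chain explicitly, with the parity constraint forcing the uncovered levels to sit at positions $2i-1$ and $2j$ with $i\leq j$. The "main obstacle" you flag is resolved in the paper exactly as you anticipate, by a short parity argument on how the disjoint pairs $\{a_i,a_i+1\}$ can tile the consecutive levels of a spread, so your plan is sound as stated.
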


\begin{proof}

Write $P=P_\ell \, \cup \cdots \cup \, P_1$ such that for each $k$, $1\leq k \leq \ell$, $P_k$ is a spread and the biggest part of $P_k$ is less than or equal to the smallest part of $P_{k+1}$ minus two for $k=1, \cdots, \ell$. By Definition~\ref{mu defn} of the invariant $\mu$ and Proposition~\ref{U min combine}, it is enough to prove the equality for a spread.

For the rest of the proof we assume that $P=(p_s^{n_s}, \cdots, p_1^{n_1})$ with $n_i>0$ for $i=1, \cdots, s$ is an $s$-spread. So we have $p_i=p_1+i-1$ and $r=r(P)=\lceil s/2 \rceil$. 

Let $U_\frak{A}$ be an $(r-1)$-$U$-chain in $\mathcal{D}_P$ with maximum cardinality. By definition of $\lambda_U(P)$, $$\min(\lambda_U(P)) =n-|U_\frak{A}|=|\mathcal{D}_P\setminus \frak{A}|.$$ Assume that $\frak{A}=\{a_1,a_1+1, \cdots, a_{r-1}, a_{r-1}+1\}$ such that $p_1\leq a_1<a_1+1<\cdots<a_{r-1}\leq p_{s}$. Let $\frak{B}=\{p_1,\cdots,p_s\}\setminus\frak{A}.$

\noindent {\bf Case 1.} $s$ is odd. 

In this case $r=\frac{s+1}{2}$ and $|\frak{A}|=2(r-1)=s-1$. Thus $|\frak{B}|=1.$ If $\frak{B}=\{p_\ell\}$ then, by definition of $U_\frak{A}$ (see Definition~\ref{general U chain}), $\ell=2i-1$, for some $1\leq i \leq r$. Thus $$\mathcal{D}_P \setminus U_\frak{A} =\{(u,p_{2i-1},k)\,|\, i\leq u\leq p_{2i-1}-i+1 \mbox{ and } 1\leq k \leq n_{2i-1}\}.$$ 

Since $P$ is a spread, $p_{2i-1}=p_1+2i-1$, and therefore by maximality of the size of $U_{\frak{A}}$, we get $$\begin{array}{ll}
\min(\lambda_U(P))&= |\mathcal{D}_P \setminus U_\frak{A}| \\&= p_1 \, \min \{n_{2i-1} \, | \, 1 \leq i \leq m\}\\&=\mu(P).\end{array}$$So the proof is complete in this case.

\noindent {\bf Case 2.} $s$ is even.

In this case $|\frak{B}|=2$. Suppose that $\frak{B}=\{p_\ell, p_{\ell'}\}$ such that $\ell < \ell'$. By definition of $U_\frak{A}$, $\ell$ must be odd and $\ell'$ must be even.  If $\ell=2i-1$ and $\ell'=2j$ for $i,j$ such that $1\leq i\leq j \leq r$, then
$$\begin{array}{ll}
\mathcal{D}_P \setminus U_\frak{A} =&\{(u,p_{2i-1},k)\,|\, i\leq u\leq p_{2i-1}-i+1 \mbox{ and } 1\leq k \leq n_{2i-1}\}\\
&\cup\{(v,p_{2j},k')\,|\, j\leq u\leq p_{2j}-j+1 \mbox{ and } 1\leq k' \leq n_{2j}\}.
\end{array}$$

Once again, we use the fact that $P$ is a spread and that $U_\frak{A}$ is an $(r-1)$-$U$-chain with maximum cardinality to get $$\begin{array}{ll}
\min(\lambda_U(P))&=|\mathcal{D}_P \setminus U_\frak{A}|\\&=\min\{p_1n_{2i-1}+(p_1+1)n_{2j}\,|\, 1\leq i\leq j\leq r\}\\&=\mu(P).\end{array}$$

This completes the proof.
\end{proof}
\section{Smallest part of $\lambda(P)$}\label{min of lambda}
In this section we study the partition $\lambda(P)$ (see Definition~\ref{lambda of P}) and determine its number of parts as well as its smallest part.

Let $\mathcal{D}$ be a poset of cardinality $n$. Recall that an {\it antichain} in $\mathcal{D}$ is a subset in which no two elements are comparable. For $k \in \{0, 1, \cdots, n\}$, let $a_k$ denote the maximum cardinality of a union of $k$ antichains in $D$. Let $\tilde{\lambda}_k=a_k-a_{k-1}$ for all $k \geq 1$ and define $\tilde{\lambda}(\mathcal{D})=(\tilde{\lambda}_1, \tilde{\lambda_2}, \cdots)$. In \cite{Greene}, C. Greene proves the following theorem.

\begin{theorem}\label{Greene}(\cite{Greene}, see\cite[Theorem 2.1]{Britz-Fomin})\label{conjugate}
Let $\mathcal{D}$ be a finite poset of cardinality $n$. Then $\lambda(\mathcal{D})$ and $\tilde{\lambda}(\mathcal{D})$ are conjugate partitions of $n$.
\end{theorem}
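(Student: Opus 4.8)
My plan is to trade the notion of ``conjugate partition'' for an equivalent duality between the two partial--sum sequences. Write $c_k$ for the maximum size of a union of $k$ chains and $a_j$ for the maximum size of a union of $j$ antichains, so that $\lambda(\mathcal D)$ and $\tilde\lambda(\mathcal D)$ are exactly their difference sequences. First I would verify that these really are partitions of $n$: the functions $k\mapsto c_k$ and $j\mapsto a_j$ are concave (by an exchange/uncrossing argument on optimal families, or by the concavity of the parametric max-flow value used in Step~3), and they reach $n$ once $\mathcal D$ is covered --- by chains, bounded via Dilworth's theorem, and by antichains, bounded via Mirsky's theorem, respectively. Next I would recall the elementary fact that, for a partition $\mu$ with partial sums $a_j$, the partial sums of its conjugate $\mu'$ are precisely $\min_{j\ge 0}\bigl(jk+(n-a_j)\bigr)$. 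Hence the entire theorem is equivalent to the identity
\[
c_k=\min_{j\ge 0}\bigl(jk+n-a_j\bigr)\qquad\text{for all }k\ge 1 .
\]

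\textbf{Step 2: the easy inequality.} Here I would use the standard fact that a chain and an antichain meet in at most one element. Thus for a $k$-chain family $C$ and a $j$-antichain family $A$ one has $|C\cap A|\le jk$, while trivially $|C\cup A|\le n$; adding these gives $|C|+|A|\le n+jk$. Maximising over $C$ and $A$ yields $c_k+a_j\le n+jk$, that is $c_k\le jk+n-a_j$ for every $j$, and therefore $c_k\le \min_{j}\bigl(jk+n-a_j\bigr)$. This is the ``$\le$'' half of the identity and requires nothing beyond the one-point intersection property.

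\textbf{Step 3 (the crux): sharpness.} The genuine content is the reverse inequality: producing, for each $k$, an index $j$ together with a $k$-chain family $C$ and a $j$-antichain family $A$ that are complementary and tight, namely $C\cup A=\mathcal D$ and $|C\cap A|=jk$, so that $|C|\ge jk+n-a_j$. I would obtain this from max-flow/min-cut with integrality. Model $c_k$ as a max-flow in the split-node network built from the comparability digraph of $\mathcal D$ (each element $v$ becomes an arc $v^-\to v^+$ of capacity $1$ carrying unit profit, each relation $v<w$ becomes an arc $v^+\to w^-$, and the source feeds at most $k$ units): an integral optimum, guaranteed by total unimodularity, is a union of $k$ chains of size $c_k$, and an optimal minimum cut decodes into an antichain family, with complementary slackness forcing the tightness $|C\cap A|=jk$ for the $j$ read off the cut. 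Equivalently one can run the Greene--Kleitman induction, or Fomin's growth-diagram/RSK machinery as in \cite{Britz-Fomin}, which produces a single pair of families witnessing all $c_k$ simultaneously. This step is the main obstacle: the rest is bookkeeping, whereas here one must actually exhibit dual optimal chain and antichain structures, and it is precisely the \emph{integrality} of the optimum that a naive LP-duality argument does not supply for free.

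\textbf{Step 4: conclude.} Combining Steps~2 and~3 establishes $c_k=\min_{j\ge 0}(jk+n-a_j)$ for all $k$. By the conjugation fact recalled in Step~1, this says exactly that the partition with partial sums $c_k$ is the conjugate of the partition with partial sums $a_j$; that is, $\lambda(\mathcal D)$ and $\tilde\lambda(\mathcal D)$ are conjugate partitions of $n$, as claimed.
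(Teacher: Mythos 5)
First, a structural point: the paper contains no proof of this statement. Theorem~\ref{conjugate} is Greene's theorem, quoted with citations to \cite{Greene} and \cite[Theorem 2.1]{Britz-Fomin} and then used as a black box (the paper only draws the consequence that $\min(\lambda(\mathcal{D}))$ equals the maximal number of disjoint maximum antichains in $\mathcal{D}$). So there is no internal argument to compare yours against; the relevant question is whether your proposal stands as a proof on its own.

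It does not quite, although the architecture is correct. Steps 1, 2 and 4 are sound: the elementary identity expressing the partial sums of a conjugate partition as $\min_{j\ge 0}\bigl(jk+n-a_j\bigr)$ is right, and the inequality $c_k+a_j\le n+jk$ follows exactly as you say from the one-point intersection of a chain with an antichain. But the entire content of Greene's theorem is concentrated in your Step 3, and there you assert rather than prove: the claim that ``an optimal minimum cut decodes into an antichain family, with complementary slackness forcing $|C\cap A|=jk$'' is precisely the delicate point, since a minimum cut of the split-node network does not directly hand you $j$ antichains covering the complement of the chain family --- extracting them requires the dual-potential/level-set analysis of Frank's min-cost-flow proof, or Greene's original exchange induction, or Fomin's growth diagrams. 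Your named fallbacks (``the Greene--Kleitman induction, or Fomin's machinery as in \cite{Britz-Fomin}'') are exactly the references the paper already cites, so as written your argument reduces to the same citation the paper makes, wrapped in a correct reduction to the min--max identity. A secondary soft spot: you invoke concavity of $k\mapsto c_k$ and $j\mapsto a_j$ up front so that $\lambda(\mathcal{D})$ and $\tilde{\lambda}(\mathcal{D})$ are genuinely partitions, but the ``exchange/uncrossing'' argument you allude to is not routine for chain families; in most treatments concavity itself falls out of the duality machinery of Step 3, so using it beforehand is mildly circular (harmless, since once Step 3 is established $c_k$ is a minimum of affine functions of $k$ and concavity is free). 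In short: correct skeleton, accurate identification of where the difficulty lies, but the crux remains a citation --- which, to be fair, is also how the paper handles it.
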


This implies that $\tilde{\lambda}_1$ is the number of parts of $\lambda(\mathcal{D})$. It also shows that the smallest part of $\lambda(\mathcal{D})$ is equal to the multiplicity of part $\tilde{\lambda_1}$ in $\tilde{\lambda}$. In other words, $\min(\lambda(D))$ is equal to the number of disjoint maximum antichains in $\mathcal{D}.$ It is this number that we will determine for the poset $\mathcal{D}_P$ (Theorem~\ref{spread min part}).

\begin{lemma}\label{antichain condition}
Let $P=(p_s^{n_s}, \cdots, p_1^{n_1})$ be a partition of $n$, such that $p_s>\cdots>p_1$ and $n_i>0$ for $1 \leq i \leq s$. 

\begin{itemize} 

\item[(a)] Let  $p_1 \leq p<q\leq p_s$ and assume that $(x,p,k)$ and $(y,q,\ell)$ are two elements in $\mathcal{D}_P$. Then  $\{\, (x,p,k), (y,q,\ell)\, \}$ is an antichain if and only if $x<y<q-p+x$.

\item[(b)] Assume that $A$ and $B$ are antichains in $\mathcal{D}_P$. Let $p=\max\{p_i\,|\, (x_i,p_i,k_i) \in A\}, \mbox{ and }
q=\min\{q_i\,|\, (y_i,q_i,\ell_i) \in B\}$.

If $p<q$, $(x,p,k)\in A$ and $(y,q,\ell) \in B$, then $A\, \cup \,B$ is an antichain if and only if $\{(x,p,k), (y,q,\ell)\}$ is an antichain.
\end{itemize}
\end{lemma}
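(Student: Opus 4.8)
The plan is to prove both parts by directly using the characterization of the partial order recorded in Remark~\ref{compare}. The essential point is that comparability of two elements living on \emph{different} levels $p<q$ depends only on the first coordinates: by Remark~\ref{compare}, $(x,p,k)\le (y,q,\ell)$ iff $q-p+x\le y$, and $(y,q,\ell)\le(x,p,k)$ iff $y\le x$. So I would begin by isolating this observation and working entirely with the first-coordinate inequalities, since the third coordinate $k$ plays no role once the levels differ.

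For part~(a), the two elements are incomparable precisely when \emph{neither} inequality holds. Negating the first, $(x,p,k)\not\le(y,q,\ell)$ means $y<q-p+x$; negating the second, $(y,q,\ell)\not\le(x,p,k)$ means $x<y$. I would simply conjoin these to obtain $x<y<q-p+x$, which is exactly the claimed condition. (One should note in passing that this condition forces $p<q$ to be strict, consistent with the hypothesis; if $p=q$ the band $x<y<x$ is empty, matching the fact that on a single level first coordinates are totally ordered.) This step is short and purely a matter of reading off Remark~\ref{compare}.

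For part~(b), the nontrivial direction is that if $p=\max$ over $A$ and $q=\min$ over $B$ with $p<q$, then a \emph{single} incomparable pair $(x,p,k)\in A$, $(y,q,\ell)\in B$ already guarantees that \emph{every} cross-pair between $A$ and $B$ is incomparable, so that $A\cup B$ is an antichain. The forward direction is trivial since $\{(x,p,k),(y,q,\ell)\}\subseteq A\cup B$. For the converse, take arbitrary $(x',p',k')\in A$ and $(y',q',\ell')\in B$; by the choice of $p$ and $q$ we have $p'\le p$ and $q'\ge q$, hence $p'\le p<q\le q'$, so these two elements lie on strictly different levels and part~(a) applies: I need $x'<y'<q'-p'+x'$. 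The idea is to bootstrap from the given pair $(x,p,k),(y,q,\ell)$, for which $x<y<q-p+x$, together with the internal antichain conditions in $A$ and in $B$. Comparing $(x',p',k')$ with $(x,p,k)$ inside the antichain $A$, and $(y',q',\ell')$ with $(y,q,\ell)$ inside $B$, yields inequalities relating $x'$ to $x$ (shifted by $p-p'$) and $y'$ to $y$ (shifted by $q'-q$), and these combine with $x<y<q-p+x$ to force $x'<y'<q'-p'+x'$.

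The main obstacle, and where I would spend the care, is exactly this bootstrapping in the converse of~(b): I must chase the first-coordinate inequalities through the level-shifts $p-p'\ge 0$ and $q'-q\ge 0$ and confirm that the ``width'' condition $y'<q'-p'+x'$ survives. Elements of $A$ on different levels satisfy, by part~(a), a band condition that controls how far apart their first coordinates can be, and likewise for $B$; the delicate case is when $(x',p',k')$ and $(x,p,k)$ (or the two elements of $B$) lie on the \emph{same} level, where I must instead use that first coordinates are comparable but the elements need not be, so no strict separation is available. I expect that carefully splitting into the subcases ``$(x',p',k')$ on a lower level than $p$'' versus ``on level $p$ itself'' (and symmetrically for $B$) and invoking part~(a) in each cross-comparison will close the argument; the arithmetic is routine once the case division is set up correctly.
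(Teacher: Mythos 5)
Your part (a) is exactly the paper's argument: read off Remark~\ref{compare} in both directions for $p<q$ and negate, obtaining $x<y<q-p+x$. The forward direction of (b) and your chaining plan for the converse also match the paper's proof: for $(x',p',k')\in A$ with $p'<p$, part (a) gives $x'<x<p-p'+x'$; for $(y',q',\ell')\in B$ with $q'>q$ it gives $y<y'<q'-q+y$; combining with $x<y<q-p+x$ yields $x'<x<y<y'$ and $y'<q'-q+y<q'-p+x<q'-p'+x'$, which is precisely the condition of part (a) for the cross-pair. That arithmetic is indeed routine.

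The genuine gap is exactly where you place the ``delicate case.'' You assert that two distinct elements of $A$ (or of $B$) may lie on the same level and ``need not be comparable.'' This is false, and recognizing that it is false is the missing ingredient: by Remark~\ref{compare} (the case $p\geq q$ with $x\leq y$, together with the chain structure inside a single column), any two \emph{distinct} elements on the same level of $\mathcal{D}_P$ are comparable. Consequently an antichain contains at most one element per level, so every element of $A$ other than $(x,p,k)$ has level strictly below $p$ and every element of $B$ other than $(y,q,\ell)$ has level strictly above $q$; this is what justifies the step ``by choice of $p$, $p'<p$'' and makes part (a) applicable to all intra-$A$ and intra-$B$ pairs. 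Your fallback plan for the same-level case cannot be rescued by case analysis or arithmetic: if an antichain could contain two incomparable same-level elements with first coordinates far apart, the conclusion of (b) would genuinely fail. For instance, with $q=p+2$, the pair $\left\{(5,p,1),(6,q,1)\right\}$ is an antichain, yet $(1,p,1)\leq (6,q,1)$ since $q-p+1\leq 6$; so a hypothetical antichain $A$ containing both $(1,p,1)$ and $(5,p,1)$ would violate the lemma. The same-level case must therefore be shown to be \emph{vacuous}, not handled; once you add that one observation, your proof closes and coincides with the paper's.
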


\begin{proof}

{\bf Part (a)} This is a consequence of Remark~\ref{compare}.
\bigskip

{\bf Part (b)} If $A \, \cup \, B$ is an antichain then $\{(x,p,k), (y,q,\ell)\}\subseteq A\, \cup \,B$ is obviously an antichain. 

Now assume that $\{(x,p,k), (y,q,\ell)\}$ is an antichain. By Part (a), we have \begin{equation}x<y<q-p+x.\end{equation}

If $(x', p', k')\in A$ is such that $(x', p', k')\neq (x, p, k)$, then by choice of $p$, $p'<p$ and, by Part (a), we must have \begin{equation}x'<x<p-p'+x'.\end{equation} Similarly, if $(y', q', \ell')\in B$ is such that $(y', q', \ell')\neq (y, q, \ell)$, then $q'>q$ and \begin{equation}y<y'<q'-q+y.\end{equation} 

Using Part (a) once again, we can conclude that each element of $A$ is incomparable to each element of $B$, as desired.

\end{proof}

\begin{definition}\label{O and E}

Suppose that $P=(p_s^{n_s}, \cdots, p_1^{n_1})$ is an $s$-spread. Let $r=r(P)$ and $0 \leq t \leq r$. 

For $1\leq x \leq p_1$ and ${\bf k}=(k_1, \cdots, k_t)$ such that $1 \leq k_i \leq n_{2i-1}$, we define  $$O^t(x, {\bf k}) = \, \{(x+i-1,p_{2i-1},k_i)\, | \, 1\leq i \leq t\, \}.$$ 

Also, for $1\leq y \leq p_2$ and ${\bf k'}=(k'_{t+1}, \cdots, k'_{r})$ with $1 \leq \ell_j \leq n_{2j}$, we define 
$$E^t(y, {\bf k'})= \, \{(y+j-1,p_{2j},k'_j)\, | \, t+1\leq j \leq r\, \}.$$

\end{definition}

Note that $O^t(x, {\bf k})$ contains only vertices from odd levels $1, 3, \cdots, 2t-1$ and $E^t(y, {\bf k'})$ contains only vertices from even levels $2(t+1), 2(t+2) \cdots , 2r$.

\begin{lemma}\label{little antichain lemma} 
Let $P=(p_s^{n_s}, \cdots, p_1^{n_1})$ be an $s$-spread and $r=r(P)$. If $A$ is a maximum antichain in $\mathcal{D}_P$, then $A$ has $r$ elements. Furthermore the following hold.
\begin{itemize}
\item[(a)] If $s$ is odd then there exist $x\in \{1, 2, \cdots, p_1\}$ and ${\bf k}=(k_1, \cdots, k_r)$ with $1 \leq k_i \leq n_{2i-1}$ such that $A=O^{r}(x, {\bf k})$.

\item[(b)] If $s$ is even then there exist $t \in \{0,1, \cdots r\}$,  $x\in \{1, 2, \cdots, p_1\}$, $\epsilon \in\{0,1\}$, ${\bf k}=(k_1, \cdots, k_t)$ with $1 \leq k_i \leq n_{2i-1}$, and ${\bf k'}=(k'_{t+1}, \cdots, k'_{r})$ with $1 \leq k'_j \leq n_{2j}$, such that $A=O^t(x, {\bf k}) \cup E^t(x+\epsilon , {\bf k'}).$
\end{itemize}

\end{lemma}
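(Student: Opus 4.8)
The plan is to reduce the whole statement to a question about which \emph{levels} of $\mathcal{D}_P$ an antichain can meet, and only afterwards to pin down the first coordinates. The starting observation is that an antichain contains at most one vertex per level: by Remark~\ref{compare}, two elements $(x,p_i,k)$ and $(y,p_i,\ell)$ at the same level are always comparable (if $x\neq y$ the middle clause applies, and if $x=y$ the third clause compares them through $k,\ell$). Hence an antichain meets a set of levels $p_{i_1}<\cdots<p_{i_m}$ and has exactly $m$ elements. Next I would translate incomparability of consecutive chosen levels into a gap condition: if $(x,p_{i_b},k)$ and $(y,p_{i_{b+1}},\ell)$ both lie in the antichain, Lemma~\ref{antichain condition}(a) forces $x<y<(p_{i_{b+1}}-p_{i_b})+x$, and since $P$ is a spread $p_{i_{b+1}}-p_{i_b}=i_{b+1}-i_b$, so $0<y-x<i_{b+1}-i_b$, which is solvable only when $i_{b+1}-i_b\geq 2$. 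Thus no two chosen indices are consecutive integers, i.e. $\{i_1,\dots,i_m\}$ is a set of pairwise non-adjacent indices in $\{1,\dots,s\}$, of which the largest has $\lceil s/2\rceil=r$ elements. This gives $m\leq r$; since $O^r(x,\mathbf{k})$ (and, in the even case, the explicit set of part (b)) is readily checked to be an antichain of size $r$ via Lemma~\ref{antichain condition}(a), a maximum antichain has exactly $r$ elements, which is the first assertion.

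For part (a), assume $s=2r-1$ is odd. A set of $r$ pairwise non-adjacent indices in $\{1,\dots,2r-1\}$ must be exactly the odd indices $\{1,3,\dots,2r-1\}$: this is a short counting argument, since $r$ chosen and $r-1$ unchosen indices with no two chosen adjacent leaves every internal gap equal to $1$ and no room at the ends. Feeding this back into the gap condition $0<y-x<i_{b+1}-i_b=2$ between consecutive chosen levels forces $y-x=1$, so the first coordinates are $x,x+1,\dots,x+r-1$; recording the row indices in $\mathbf{k}$ yields $A=O^r(x,\mathbf{k})$ in the notation of Definition~\ref{O and E}, and existence of the bottom vertex forces $1\leq x\leq p_1$.

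For part (b), assume $s=2r$ is even. The admissible index sets of size $r$ are exactly those of the form $\{1,3,\dots,2t-1\}\cup\{2t+2,\dots,2r\}$ for some $0\leq t\leq r$: a short gap count shows the $r-1$ consecutive differences are all $2$ except for a single difference $3$ (or, at the extremes $t=0,r$, the all-even and all-odd sets), giving exactly one ``double gap.'' The same gap analysis forces the first coordinates to increase by $1$ within the odd block $p_1,\dots,p_{2t-1}$ and within the even block $p_{2t+2},\dots,p_{2r}$, producing $O^t(x,\mathbf{k})$ and $E^t(y,\mathbf{k'})$. The only remaining freedom is the offset $y-x$, which I determine by applying Lemma~\ref{antichain condition}(a) to the topmost odd vertex (first coordinate $x+t-1$) and the bottommost even vertex (first coordinate $y+t$), whose index gap is $3$: this gives $x+t-1<y+t<x+t+2$, hence $y\in\{x,x+1\}$. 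Writing $y=x+\epsilon$ with $\epsilon\in\{0,1\}$ completes the claim.

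I expect the main obstacle to be the clean classification of the size-$r$ index sets (the maximum non-adjacent subsets of $\{1,\dots,s\}$) together with making the first-coordinate bookkeeping airtight across the odd/even blocks. Lemma~\ref{antichain condition}(b) is precisely what keeps this manageable: it reduces all the cross-block incomparability checks to the single adjacent pair at the interface between the blocks, so I never have to verify incomparability of far-apart levels by hand once the consecutive-gap constraints have fixed the first coordinates.
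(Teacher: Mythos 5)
Your proof is correct and follows essentially the same route as the paper's: exhibit $O^r(x,\mathbf{k})$ as a size-$r$ antichain for the lower bound, use Lemma~\ref{antichain condition}(a) plus the spread property to force gaps of at least $2$ between consecutive occupied levels for the upper bound, and then let the same gap inequalities pin down the level sets and the first coordinates (including the offset $\epsilon\in\{0,1\}$ at the odd/even interface). The only difference is presentational: you make explicit the counting argument classifying the maximum non-adjacent index sets, which the paper asserts directly as ``$A$ has length $r$ if and only if\dots''.
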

\begin{proof}

First note that since $P$ is a spread, if $1\leq i \leq s$ then $p_{i}=p_1+i-1$.  Thus, by Lemma~\ref{antichain condition}, $O^t(x, {\bf k}) \cup E^t(x+\epsilon , {\bf k'})$ is an antichain, for all $t \in \{0,1, \cdots r\}$,  all $x\in \{1, 2, \cdots, p_1\}$, all $\epsilon \in\{0,1\}$, all ${\bf k}=(k_1, \cdots, k_t)$ with $1 \leq k_i \leq n_{2i-1}$, and all ${\bf k'}=(k'_{t+1}, \cdots, k'_{r(P)})$ with $1 \leq k'_j \leq n_{2j}$. This in particular implies that $O^{r}(x, {\bf k})$ is an antichain in $\mathcal{D}_P$. Since the length of $O^{r}(x, {\bf k})$ is $r=r(P)$, we get $|A|\geq r.$

We write $A=\{(x_i,q_i,k_i)\,|\, 1 \leq i \leq |A|\}$ such that $q_1\leq \cdots \leq q_{|A|}$. By Lemma~\ref{antichain condition}, we must have $q_{i+1}\geq  q_{i}+2$, for $1 \leq i <|A|$. Thus $|A| \leq \lceil s/2 \rceil=r$. Therefore, the length of the maximum antichain $A$ must be exactly $r$.
\bigskip

{\bf Part (a)} Let $s$ be odd. Then $A$ has length $r$ if and only if $q_i=p_{2i-1}$ for all $i$. By Lemma~\ref{antichain condition} we have $x_i<x_{i+1}<q_{i+1}-q_{i-1}+x_i$. Since $P$ is a spread, $q_{i+1}-q_{i}=p_{2i+1}-p_{2i-1}=2$. Thus $x_{i+1}=x_i+1$. Obviously, $1 \leq x_1 \leq p_1$ and $A=O^{r}(x_1,{\bf k})$ for ${\bf k}=(k_1, \cdots, k_{r})$.
\bigskip

{\bf Part (b)} Let $s$ be even. Then $A$ has length $r$ if and only if there exists $t \in\{0, \cdots, r\}$ such that $q_i=p_{2i-1}$ for $ i \leq t $ and $q_i=p_{2i}$ for $t <  i $. Also by Lemma~\ref{antichain condition} $x_i<x_{i+1}<q_{i+1}-q_{i}+x_i$ for all $i$. For $i<t$ and $i \geq t+1$, we have $q_{i+1}-q_{i}=2$ and $q_{t+1}-q_t=3$.
So $x_i=x_1+i-1$ if $i\leq t$. As for $x_{t+1}$, it is either equal to $x_t+1$ or equal to $x_t+2$. Finally if $i>t+1$ then $x_i=x_{t+1}+i-t-1$. 

Therefore $A=O^t(x_1, {\bf k}) \cup E^t(x_1+\epsilon , {\bf k'})$, where ${\bf k}=(k_1, \cdots, k_t)$ and ${\bf k'}=(k_{t+1}, \cdots, k_{r})$ and $\epsilon=x_{t+1}-x_t-1$.

\end{proof}

\begin{proposition}\label{number of parts}
Let $P$ be a partition. All three associated partitions $\lambda(P)$, $\mbox{Q}(P)$ and $\lambda_U(P)$ have  $r(P)$ parts (see Definition~\ref{rP}).
\end{proposition}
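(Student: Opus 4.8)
The plan is to prove that all three partitions have exactly $r(P)$ parts by treating the three cases separately, leveraging the machinery already built in the excerpt. For $Q(P)$, the number of parts is $r(P)$ by the cited results \cite[Proposition 2.4]{Basili03} and \cite[Theorem 2.17]{BIK}, which was already noted immediately after Definition~\ref{rP}; so nothing new is required there. For $\lambda_U(P)$, the number of parts is likewise $r(P)$ as observed right after Definition~\ref{rP}, following directly from Definitions~\ref{general U chain} and~\ref{lambda U}: the maximal number of pairwise-disjoint $U$-chains one can pack into $\mathcal{D}_P$ is governed by the almost-rectangular decomposition of $P$, giving exactly $r(P)$ chains. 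Thus the genuine content of the proposition is the assertion for $\lambda(P)$, and that is where I would focus.

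For $\lambda(P)$, I would invoke Greene's theorem (Theorem~\ref{Greene}): since $\lambda(\mathcal{D}_P)$ and $\tilde\lambda(\mathcal{D}_P)$ are conjugate partitions, the number of parts of $\lambda(P)$ equals the largest part $\tilde\lambda_1$ of $\tilde\lambda(\mathcal{D}_P)$, which is exactly the maximum cardinality of an antichain in $\mathcal{D}_P$. Hence it suffices to show that the maximum size of an antichain in $\mathcal{D}_P$ is $r(P)$. First I would reduce to the case where $P$ is a single spread: by the decomposition $P=P_\ell\cup\cdots\cup P_1$ from Definition~\ref{mu defn}, where consecutive spreads differ in part-size by at least two, an antichain of $\mathcal{D}_P$ decomposes across the levels so that its restriction to each spread block is again an antichain, and the large gaps between blocks ensure (via Remark~\ref{compare} and Lemma~\ref{antichain condition}) that antichains in distinct blocks combine freely; this yields that the maximum antichain size of $\mathcal{D}_P$ is the sum of those of the blocks, namely $\sum_k r(P_k)=r(P)$. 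Then, for a single $s$-spread, Lemma~\ref{little antichain lemma} does all the work: it states precisely that every maximum antichain has exactly $r=r(P)=\lceil s/2\rceil$ elements. So $\tilde\lambda_1=r(P)$, and by Greene's theorem $\lambda(P)$ has $r(P)$ parts.

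The main step to execute carefully is the reduction from a general partition to a single spread, i.e.\ justifying that the maximum antichain size is additive over the blocks $P_k$. The upper bound is the delicate direction: I must argue that no antichain can exploit the gaps to pick up more than $r(P_k)$ elements from block $k$. Concretely, if $A$ is any antichain, then $A\cap\mathcal{D}_{P_k}$ is an antichain in the sub-poset $\mathcal{D}_{P_k}$ (which is an induced subposet since comparabilities within a block are unaffected by other blocks, by Remark~\ref{compare}), so by the single-spread count $|A\cap\mathcal{D}_{P_k}|\le r(P_k)$; summing over $k$ gives $|A|\le\sum_k r(P_k)=r(P)$. For the lower bound, I would exhibit an explicit antichain of size $r(P)$ by choosing within each block a maximum antichain of the form supplied by Lemma~\ref{little antichain lemma} (e.g.\ an $O^{r(P_k)}$-type chain or an appropriate $O^t\cup E^t$ set) and then concatenating them across blocks; the at-least-two gaps between consecutive spreads guarantee, via Part (a) of Lemma~\ref{antichain condition}, that the inter-block pairs remain incomparable, so the union is indeed an antichain.

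I expect the subtle point to be verifying that the block restrictions of an antichain are genuinely antichains in the induced subposets and that inter-block incomparability holds automatically; both follow from Remark~\ref{compare} once one notes that for vertices in levels $p$ and $q$ with $q\ge p+2$ lying in adjacent blocks, the comparability criterion $q-p+x\le y$ becomes hard to satisfy precisely because of the size gap—but this must be checked against the possibility that an element in a lower block sits under an element of a higher block. Handling this cleanly, rather than the single-spread count (which Lemma~\ref{little antichain lemma} already delivers), is where the care is needed.
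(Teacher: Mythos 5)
Your proposal is correct and follows essentially the same route as the paper: Greene's theorem to turn the part-count of $\lambda(P)$ into the maximum antichain size of $\mathcal{D}_P$, the spread decomposition with the restriction argument plus Lemma~\ref{little antichain lemma} for the upper bound, and a concatenation of $O$-type antichains across blocks justified by Lemma~\ref{antichain condition} for the lower bound. The one detail the paper makes explicit that you leave implicit is the choice of horizontal shifts in the lower-bound construction --- it takes $O^{r_1}(1,{\bf k_1}) \cup O^{r_2}(1+r_{1},{\bf k_2}) \cup \cdots \cup O^{r_{\ell}}(1+r_{1}+\cdots+r_{\ell-1},{\bf k_{\ell}})$ precisely so that the cross-block condition $x<y<q-p+x$ of Lemma~\ref{antichain condition}(a) holds; as you yourself suspect, this incomparability is not automatic for arbitrary choices of maximum antichains within the blocks.
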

\begin{proof}

As mentioned earlier, from \cite[Proposition 2.4]{Basili03} and \cite[Theorem 2.17]{BIK}, we know that $\mbox{Q}(P)$ has $r(P)$ parts. On the other hand, from the definition of a $U$-chain (Definition~\ref{general U chain}) it is clear that $\lambda_U(P)$  must have $r(P)$ parts. So it is enough to show that $\lambda(P)$ has $r(P)$ parts as well.

By Theorem~\ref{conjugate}, the number of parts of $\lambda(P)$ is equal to the maximum cardinality of an antichain in $\mathcal{D}_P$. Write the partition $P$ as $P_\ell \, \cup \cdots \cup \, P_1$ such that each $P_i$ is an $s_i$-spread and the biggest part of $P_i$ is less than or equal to the smallest part of $P_{i+1}$ minus two. One obviously has $r_P=r(P_{\ell})+\cdots+r(P_1)$. 

Let $A$ be an antichain in $\mathcal{D}_P$ and $1\leq i \leq \ell$. Then $A_i=A \cap \mathcal{D}_{P_i}$ is an antichain in $\mathcal{D}_{P_i}$. So by Lemma~\ref{little antichain lemma} $|A_i| \leq r(P_i)$. Thus $$\mid A \mid=\mid A_\ell \mid+\cdots +\mid A_1 \mid\leq r(P_{\ell})+\cdots+r(P_1)=r(P).$$

On the other hand, we can take an appropriate union of maximum antichains in $\mathcal{D}_{P_i}$ to get a maximum antichain in $\mathcal{D}_{P}$. For example consider $$O^{r_1}(1,{\bf k_1}) \cup O^{r_2}(1+r_{1},{\bf k_2}) \cup \cdots \cup O^{r_{\ell}}(1+r_{1}+\cdots+r_{\ell-1},{\bf k_{\ell}})\subset \mathcal{D}_P. $$ By Lemma~\ref{antichain condition} this is an antichain in $\mathcal{D}_P$ and has the desired cardinality $r(P)$.
\end{proof}

Next we prove that $\min(\lambda(P))=\mu(P)$. We will prove this by enumerating disjoint maximum antichains in $\mathcal{D}_P$.  So our goal is to find a set $\mathcal{A}$ consisting of disjoint maximum antichains in $\mathcal{D}_P$ such that $\mathcal{A}$ has the maximum cardinality possible. The following example shows that even if $\mathcal{A}$ is maximal, i.e. it is not strictly contained in any other set of disjoint maximum antichains of $\mathcal{D}_P$, its cardinality may be strictly smaller than the cardinality of another set of disjoint maximum antichains in $\mathcal{D}_P$.

\begin{example}
Let $P=(4^2, 2, 1^2)$. The following sets $\mathcal{A}$ and $\mathcal{B}$ of disjoint maximum antichains are both maximal. 
$$\begin{array}{ll}
\mathcal{A} =&\{\{(1,1,1),(2,4,1)\}, \, \, \{(1,1,2),(2,4,2)\}, \, \, \{(2,2,1),(3,4,1)\}\}. \\
\mathcal{B} =&\{\{(1,1,1),(2,4,1)\}, \, \, \{(1,1,2),(3,4,1)\}, \, \, \{(1,2,1),(2,4,2)\}, \, \, \{(2,2,1),(3,4,2)\}.
\end{array}$$
\end{example}

The following method gives an algorithm to find the maximum possible number of disjoint maximum antichains in $\mathcal{D}_P$. 

\begin{definition}
Let $P=(p_s^{n_s}, \cdots, p_1^{n_1})$ be a partition and $\mathcal{A}$ be a subset of $\mathcal{D}_P$. For $p_1\leq p\leq p_s$ and $1\leq x \leq p$, define $$m_{\mathcal{A}}(x,p)=|\{k\in \mathbb{N}\, |\, (x,p,k)\in A, \mbox{ for some } A\in \mathcal{A}\}|.$$

\end{definition}

In the example above, let $\mathcal{A'} =\{(1,1,1),(2,4,1),(1,1,2),(2,4,2)\}$ be the union of the first two antichain in $\mathcal{A}$ and similarly, let $\mathcal{B'} =\{(1,1,1),(2,4,1), (1,1,2),(3,4,1)\}$, be the union of the first two antichains in $\mathcal{B}$. We have 
$$\begin{array}{l}
 m_{\mathcal{A'}}(2,4)=2, \mbox{ and } m_{\mathcal{A}}(3,4)=0, \mbox{ but } \\
 m_{\mathcal{B'}}(2,4)=1, \mbox{ and } m_{\mathcal{A'}}(3,4)=1.
\end{array}$$

Note that the antichains in $\mathcal{B'}$ are chosen ``more uniformly" than the ones in $\mathcal{A'}$, and this affects the cardinality when they are extended to maximal sets of disjoint antichains, namely $\mathcal{B}$ and $\mathcal{A}$. In fact, in Proposition~\ref{spread min part prop} and Theorem~\ref{spread min part}, we give an algorithm for finding the maximum number of disjoint maximum antichains as ``uniformly" as possible. In each step of our algorithm, we chose maximum antichain so that for each level $p_i$, there exist non-negative integers $m$, $d$ and $d'$ and $d''$ such that the sequence of non-zero multiplicities $m_{\mathcal{A}}(x,p_i)$ has either the form $((m+1)^{d}, m^{d'}, (m+1)^{d''})$ or the form  $(m^{d}, (m+1)^{d'}, m^{d''})$.

The following lemmas are key to our method. They are somewhat technical, so the reader not interested in these details could go directly to the proof of Proposition~\ref{spread min part prop} and see how they are applied. Examples~\ref{theta<etaexample}  and~\ref{eta<thetaexample} also illustrate the Lemmas.

\begin{lemma}\label{strong theta<eta1}
Let $P=((p+g+2)^N,p^M)$ such that $g\geq 1$. Suppose that $\eta, \theta$, $\delta$ and $d$ are integers such that $0 \leq \eta < M$, $1\leq \theta \leq N$ , $0\leq  \delta < p$, and $0\leq d \leq p-\delta$. Assume that $(p+g)\theta\leq p\eta+\delta$. Consider the following subposet of $\mathcal{D}_P$:
\begin{equation}\label{D}
\begin{array}{lll}
\mathcal{D}&=&\{(x,p,k)\,|\, 1\leq x \leq d, 1\leq k\leq \eta\}\\ &\cup&\{(x,p,k)\,|\, d+1\leq x \leq d+\delta, 1\leq k\leq \eta+1\}\\ &\cup&\{(x,p,k)\,|\, d+\delta+1\leq x \leq p, 1\leq k\leq  \eta\}\\ &\cup& \{(y+1,p+g+2,k')\,|\, 1\leq y \leq p+g, 1\leq k'\leq \theta\}.\end{array}
\end{equation}

Write $g\, \theta=pa+b$, for $a\geq 0$ and $0\leq b<p$. 
\begin{itemize}

\item[(a)] If $0\leq b \leq \delta$ and 
$$\begin{array}{ll}
\mathcal{Y}&=\{(y+1,p+g+2,k')\,|\, 1\leq y\leq p+g, 1\leq k'\leq \theta\}, \mbox{ and }\\
\mathcal{X}&=\{(x,p,k)\,|\, x \in\{1, \cdots, d+\delta-b,d+\delta+1, \cdots, p\} \mbox{ and } k \in \{1, \cdots, \theta+a\}\} \\ &\cup \{(x,p,k)\,|\, x \in\{d+\delta-b+1, \cdots, d+\delta\} \mbox{ and } k \in \{1, \cdots, \theta+a+1\}\},
\end{array}$$
then $\mathcal{X}\cup\mathcal{Y}$ is union of $(p+g)\theta$ disjoint antichains in $\mathcal{D}_P$, each of length 2.

\item[(b)] If $\delta <b\leq \delta+d$ and  
$$\begin{array}{ll}
\mathcal{Y}&=\{(y+1,p+g+2,k')\,|\, 1\leq y\leq p+g, 1\leq k'\leq \theta\}, \mbox{ and }\\
\mathcal{X}&=\{(x,p,k)\,|\, x \in\{1, \cdots, b-\delta,d+1, \cdots, d+\delta\} \mbox{ and } k \in \{1, \cdots, \theta+a+1\}\} \\ &\cup \{(x,p,k)\,|\, x \in\{b-\delta+1,\cdots, d, d+\delta+1, \cdots, p\} \mbox{ and } k \in \{1, \cdots, \theta+a\}\},
\end{array}$$
then $\mathcal{X}\cup\mathcal{Y}$ is the union of $(p+g)\theta$ disjoint antichains in $\mathcal{D}_P$, each of length 2.

\item[(c)] If $ \delta+d < b$ and  
$$\begin{array}{ll}
\mathcal{Y}&=\{(y+1,p+g+2,k')\,|\, 1\leq y\leq p+g, 1\leq k'\leq \theta\}, \mbox{ and }\\
\mathcal{X}&=\{(x,p,k)\,|\, x \in\{1, \cdots, d+\delta,p-b+d+\delta+1, \cdots, p\} \mbox{ and } k \in \{1, \cdots, \theta+a+1\}\} \\ &\cup \{(x,p,k)\,|\, x \in\{d+\delta+1,\cdots, p-b+d+\delta\} \mbox{ and } k \in \{1, \cdots, \theta+a\}\},
\end{array}$$
then $\mathcal{X}\cup\mathcal{Y}$ is the union of $(p+g)\theta$ disjoint antichains in $\mathcal{D}_P$, each of length 2. 
\end{itemize}
\end{lemma}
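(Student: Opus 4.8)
The plan is to exploit that, since $g\ge 1$, the partition $P=((p+g+2)^N,p^M)$ has only two distinct parts, so $\mathcal{D}_P$ has exactly two levels, $p$ and $p+g+2$, separated by a gap $g+2\ge 3$. By Remark~\ref{compare} each level is totally ordered, so every antichain meets each level at most once; hence an antichain of length $2$ is exactly a pair $\{(x,p,k),(y,p+g+2,\ell)\}$, and by Lemma~\ref{antichain condition}(a) (with $q-p=g+2$) this is an antichain precisely when $y-x\in\{1,\dots,g+1\}$. Since $\mathcal{X}$ sits entirely in level $p$ and $\mathcal{Y}$ entirely in level $p+g+2$, decomposing $\mathcal{X}\cup\mathcal{Y}$ into $(p+g)\theta$ disjoint length-$2$ antichains is the same as exhibiting a bijection between $\mathcal{X}$ and $\mathcal{Y}$ that pairs each bottom vertex with a top vertex whose columns differ by an offset in $\{1,\dots,g+1\}$. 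As compatibility depends only on the two column indices, I would encode such a pairing as an integer transportation problem: give each top column $c\in\{2,\dots,p+g+1\}$ the supply $\theta$, each bottom column $x\in\{1,\dots,p\}$ the demand equal to its multiplicity in $\mathcal{X}$, and join $x$ to $c$ exactly when $c\in\{x+1,\dots,x+g+1\}$.

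Before solving this I would record two bookkeeping facts common to all three cases. First, in each case $\mathcal{X}$ consists of $b$ ``heavy'' columns (with $\theta+a+1$ rows) and $p-b$ ``light'' columns (with $\theta+a$ rows), so using $g\theta=pa+b$ one gets $|\mathcal{X}|=(p-b)(\theta+a)+b(\theta+a+1)=p(\theta+a)+b=(p+g)\theta=|\mathcal{Y}|$; the totals match, so a compatible bijection will indeed decompose $\mathcal{X}\cup\mathcal{Y}$ into exactly $(p+g)\theta$ pairs. Second, $\mathcal{X}\subseteq\mathcal{D}$: from the hypothesis $(p+g)\theta\le p\eta+\delta$ and the count above we get $p(\theta+a)\le p\eta+\delta-b$, which forces $\theta+a\le\eta$ in case (a) (where $b\le\delta<p$) and $\theta+a+1\le\eta$ in cases (b) and (c) (where $b>\delta$, resp. $b>\delta+d$, makes $\delta-b<0$). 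This is exactly why the heavy columns are placed as prescribed: in case (a) they must lie inside the block $\{d+1,\dots,d+\delta\}$ where $\mathcal{D}$ already carries $\eta+1$ rows (possible since $b\le\delta$), whereas in cases (b), (c) the stronger bound $\theta+a+1\le\eta$ allows them in columns of multiplicity $\eta$, placed so as to keep the residual profile unimodal.

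The heart of the argument is the feasibility of the transportation problem, which I would settle via the max-flow min-cut theorem (integer capacities yield an integral flow) in its supply-demand (Gale) form of Hall's condition. The only structural fact needed is that in this banded graph every nonempty set $S$ of bottom columns satisfies $|N(S)|\ge|S|+g$, where $N(S)=\bigcup_{x\in S}\{x+1,\dots,x+g+1\}$: the neighbourhood of a single interval is an interval of size $|S|+g$ lying inside $\{2,\dots,p+g+1\}$ with no boundary truncation, and breaking $S$ into separate runs only adds further columns. Consequently the supply reachable from $S$ is $s(N(S))=|N(S)|\theta\ge(|S|+g)\theta=|S|\theta+pa+b$, while the demand of $S$ is at most $|S|(\theta+a)+b=|S|\theta+|S|a+b$; since $|S|\le p$ we have $|S|a\le pa$, so $d(S)\le s(N(S))$. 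With equal totals this demand-side condition is equivalent to the supply-side one and hence guarantees a feasible integral flow. Reading the flow as an actual pairing of vertices within each compatible pair of columns then produces the required $(p+g)\theta$ disjoint length-$2$ antichains whose union is exactly $\mathcal{X}\cup\mathcal{Y}$.

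I expect the main obstacle to be organizational rather than conceptual. The Hall bound $d(S)\le s(N(S))$ is uniform across the three cases, so the genuine work is the index arithmetic: checking, case by case, that the prescribed column sets indeed have sizes $b$ and $p-b$ and that every heavy column lands where $\mathcal{D}$ has enough rows. The trichotomy on $b$ (against $\delta$ and $\delta+d$) is precisely the obstruction to $\mathcal{X}\subseteq\mathcal{D}$, forced by the shape $\eta,\eta+1,\eta$ of $\mathcal{D}$ together with whether $\theta+a+1$ exceeds $\eta$; it plays no role in the matching step itself. Should a fully explicit decomposition be preferred over an appeal to max-flow, the same flow can be built greedily, by sweeping the bottom columns from left to right and each time filling the required rows from the lowest-indexed compatible top columns still available; the prefix form of the bound above, $\sum_{x'\le x}m_{x'}\le(x+g)\theta$, shows the sweep never stalls.
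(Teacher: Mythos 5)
Your proof is correct, but it takes a genuinely different route from the paper's. The paper argues constructively: it lists $\mathcal{X}=\{X_1<\cdots<X_{(p+g)\theta}\}$ and $\mathcal{Y}=\{Y_1<\cdots<Y_{(p+g)\theta}\}$ in their linear orders (each level being a chain), pairs the $c$-th elements $A_c=\{X_c,Y_c\}$, and verifies the offset condition $x_c\le y_c\le x_c+g$ by writing down, separately in each of the three cases, the explicit formula expressing $c$ in terms of $(x_c,k_c)$ and sandwiching $(x_c-1)(\theta+a)<c\le x_c(\theta+a)+b$. You replace this index arithmetic by a feasibility argument: since comparability across the two levels depends only on column indices (Lemma~\ref{antichain condition}(a)), you reduce to a transportation problem on column multiplicities with band $c-x\in\{1,\dots,g+1\}$, and check the Gale/Hall condition once, uniformly in all three cases, via $|N(S)|\ge|S|+g$ and $d(S)\le|S|(\theta+a)+b\le(|S|+g)\theta$. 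Both proofs rest on the same two bookkeeping facts, which you establish exactly as the paper does: $|\mathcal{X}|=|\mathcal{Y}|=(p+g)\theta$, and $\mathcal{X}\subseteq\mathcal{D}$ via the divisibility argument giving $\theta+a\le\eta$ in case (a) and $\theta+a+1\le\eta$ in cases (b), (c). What your route buys: the case trichotomy is confined to the containment check, and the matching step works for \emph{any} placement of the $b$ heavy columns, so the proof is shorter and more robust to the exact shape of $\mathcal{X}$. What the paper's route buys: a fully explicit pairing, which is what the later algorithm and Example~\ref{min 10} actually display when they invoke ``the matching obtained by Lemma~\ref{strong theta<eta1}''; note, however, that the subsequent lemmas only use the residual set $\mathcal{D}\setminus(\mathcal{X}\cup\mathcal{Y})$, which is determined by the statement of the lemma and not by the particular pairing, so your existence proof supports all downstream results.

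Two small points to tighten. First, your justification of $|N(S)|\ge|S|+g$ for a set $S$ that is not an interval (``breaking into runs only adds columns'') should be made precise; a one-line fix is $N(S)\supseteq\{x+1\,:\,x\in S\}\,\sqcup\,\{\max S+2,\dots,\max S+g+1\}$. Second, the closing greedy variant is not fully justified by the prefix bound alone: correctness of lowest-available-first on a banded instance needs an exchange argument, not just Hall's condition on prefixes. Since the max-flow/Gale argument is already complete, the greedy remark is dispensable and could simply be dropped.
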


\begin{proof}

By the definition of $\mathcal{D}_P$, in each part of the Lemma the sets $\mathcal{X}$ and $\mathcal{Y}$ are each totally ordered. So we can write $\mathcal{X}=\{X_1, \cdots, X_{(p+g)\theta}\}$ and $\mathcal{Y}=\{Y_1, \cdots, Y_{(p+g)\theta}\}$, such that $X_1<\cdots<X_{(p+g)\theta}$ and $Y_1<\cdots<Y_{(p+g)\theta}$. We prove that $A_c=\{X_c,Y_c\}$ is an antichain for $1\leq c \leq (p+g)\theta$. For $c\in \{1, \cdots, (p+g)\theta\}$, we have $Y_c=(y_c+1, p+g+2, c-\theta(y_c-1))$ with $y=\lceil\frac{c}{\theta}\rceil$, and we write $X_c=(x_c,p,k_c).$ By Lemma~\ref{antichain condition}, to prove that $A_c$ is an antichain, we need to show that 
\begin{equation}\label{x<y<x+g}
x_c\leq y_c\leq x_c+g.
\end{equation}
\medskip

\noindent{\bf Part (a)} First note that since $p\eta+\delta \geq (p+g)\theta=p\theta+pa+b$, we get $p(\eta-\theta-a) \geq b-\delta\geq -\delta$. Since $\delta<p$ and $p(\eta-\theta-a)$ is divisible by $p$, we must have $\eta\geq \theta+a$. So $\mathcal{X}\cup\mathcal{Y}\subseteq \mathcal{D}.$

By definition of $\mathcal{X}$ the following equatities hold for $c\in \{1, \cdots, (p+g)\theta\}$: 
$$c=\left\{\begin{array}{lll}(x_c-1)(\theta+a)+k_c&&\mbox{if } 1\leq x_c \leq d+\delta-b, \\(x_c-1)(\theta+a+1)-d-\delta+b+k_c&&\mbox{if } d+\delta-b+1\leq x_c \leq d+\delta,\\(x_c-1)(\theta+a)+b+k_c&&\mbox{if }d+\delta+1\leq x_c\leq p.\end{array}\right.$$ Also $k_c\in\{1, \cdots, \theta+a+1\}$ if  $d+\delta-b+1\leq x_c \leq d+\delta$ and $k_c\in \{1, \cdots, \theta+a\}$, otherwise.

Thus we have $(x_c-1)\theta \leq (x_c-1)(\theta+a) < c \leq x_c(\theta+a)+b\leq x_c\theta +g\theta.$ So we get $x_c-1<\frac{c}{\theta}\leq x_c+g$, which implies the desired inequalities (\ref{x<y<x+g}) and completes the proof of Part (a).

\medskip

\noindent{\bf Part (b)} Since $p\eta+\delta \geq(p+g)\theta$ and $b-\delta>0$, we get $p(\eta-\theta-a)> 0$. So $\eta\geq \theta+a+1$ and therefore $\mathcal{X}\cup\mathcal{Y}\subseteq \mathcal{D}.$ 

By the definition of $\mathcal{X}$, the following equalities hold for $c\in \{1, \cdots, (p+1)\theta\}$: $$c=\left\{\begin{array}{lll}(x_c-1)(\theta+a+1)+k_c&&\mbox{if } 1\leq x_c \leq b- \delta, \\(x_c-1)(\theta+a)+b-\delta+k_c&&\mbox{if } b-\delta+1\leq x_c \leq d,\\(x_c-1)(\theta+a+1)-d-\delta+b+k_c&&\mbox{if }d+1\leq x_c\leq d+\delta,\\(x_c-1)(\theta+a)+b+k_c&&\mbox{if }d+\delta+1\leq x_c\leq p.\end{array}\right.$$
Also $k_c\in\{1, \cdots, \theta+a+1\}$ if $1 \leq x_c \leq b-\delta$ or $d+1 \leq x_c \leq d+\delta$,  and $k_c\in\{1, \cdots, \theta+a\}$, otherwise. 

Thus $(x_c-1)\theta \leq (x_c-1)(\theta+a) < c \leq x_c(\theta+a)+b\leq x_c\theta +g\theta,$ and we have the desired inequalities (\ref{x<y<x+g}).

\medskip

\noindent{\bf Part (c)} Since $\delta\leq d+\delta<b$, a similar argument to Part (b) shows that  $\eta\geq \theta+a+1$ and thus $\mathcal{X}\cup\mathcal{Y}\subseteq \mathcal{D}.$

The proof is also similar to the proofs of the previous two parts. Here, for $c\in \{1, \cdots, (p+g)\theta\}$ we have the following equalities:
 
$$c=\left\{\begin{array}{lll}(x_c-1)(\theta+a+1)+k_c&&\mbox{if } 1\leq x_c \leq d+\delta, \\(x_c-1)(\theta+a)+d+\delta+k_c&&\mbox{if } d+\delta+1\leq x_c \leq p-b+d+\delta,\\(x_c-1)(\theta+a+1)-p+b+k_c&&\mbox{if }p-b+d+\delta+1\leq x_c\leq p.\end{array}\right.$$ 
Also $k_c\in\{1, \cdots, \theta+a\}$ if $d+\delta+1 \leq x_c \leq p-b+d+\delta$, and  $k_c\in \{1, \cdots, \theta+a+1\}$ otherwise.

\end{proof}

The following example illustrates the application of Lemma~\ref{strong theta<eta1}. 

\begin{example}\label{theta<etaexample}
Let $p=6$, $\eta=4$, $d=2$, $\delta=2$, $\theta=1$, and let $\mathcal{D}$ be defined by Formula (\ref{D}) of Lemma~\ref{strong theta<eta1}. So 
$$\begin{array}{lll}
\mathcal{D}&=&\{(x,6,k)\,|\, 1\leq x \leq 2, 1\leq k\leq 4\}\\ &\cup&\{(x,6,k)\,|\, 3\leq x \leq 4, 1\leq k\leq 5\}\\ &\cup&\{(x,6,k)\,|\, 5\leq x \leq 6, 1\leq k\leq 4\}\\ &\cup& \{(y+1,8+g,1)\,|\, 1\leq y \leq 6+g\}.
\end{array}$$

So $p\eta+\delta=26$ and if $1\leq g\leq 20$ then we will have $(p+g)\theta\leq p\eta+\delta$ and we can use Lemma~\ref{strong theta<eta1} to find $(p+g)\theta=6+g$ disjoint antichains each of length 2. We write $g=6a+b$ such that $0\leq a$ and $0\leq b < 6$.

In Figure 4 we illustrate the vertices of $\mathcal{X}$ as a subset of level $p=6$ of $\mathcal{D}$ for three different values of $g$. In the figures, vertices in level 6 of $\mathcal{D}$ are represented by dots arranged as in Definition~\ref{poset}. The hollowed vertices ($\circ$) in each figure represent the vertices of $\mathcal{D}$ that are in $\mathcal{X}$ for the given value of $g$. 

\begin{itemize}
 \item[(a)] Let $g=7$. Then $a=1$ and $b=1$, and we use the method in part (a) of the Lemma~\ref{strong theta<eta1} to get $\mathcal{X}$. So for all $x\neq 4$, there are exactly two vertices of the form $(x,6,k)$ in $\mathcal{X}$, and there are exactly three vertices of the form $(3,6,k)$ in $\mathcal{X}$. Note that in Lemma~\ref{strong theta<eta1} the vertices of $\mathcal{X}$ are chosen from the bottom of each column, but in Figure 4 we are choosing them from the top of each column in $\mathcal{D}$. This makes the similarity of the pattern of remaining vertices to the original pattern in $\mathcal{D}$ clearer without affecting the validity of the statement of the lemma. 
 
\item[(b)] Let $g=9$. Then $a=1$ and $b=3$, and we use the method in part (b) of the Lemma~\ref{strong theta<eta1} to get $\mathcal{X}$.
\item[(c)] Let $g=11$. Then $a=1$ and $b=5$, and we use the method in part (c) of the Lemma~\ref{strong theta<eta1} to get $\mathcal{X}$.
\end{itemize}

\begin{figure}
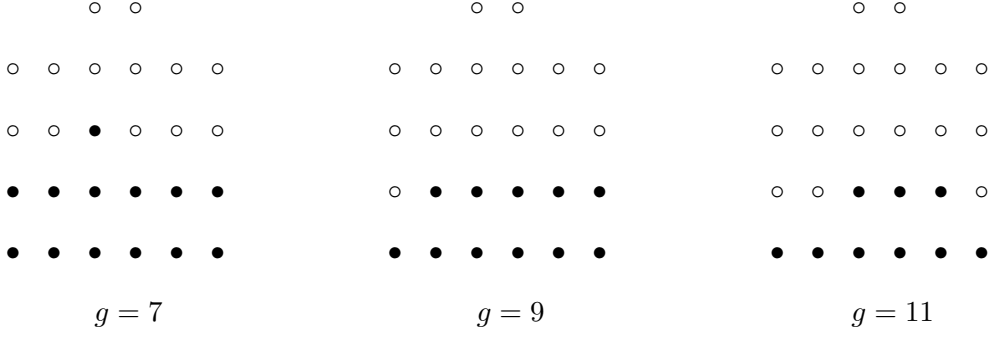
\label{shekl D}
$$\begin{array}{ccccc}
\begin{array}{lllllll}
&&\circ&\circ&&&\\
\circ&\circ&\circ&\circ&\circ&\circ\\
\circ&\circ&\bullet&\circ&\circ&\circ\\
\bullet&\bullet&\bullet&\bullet&\bullet&\bullet\\
\bullet&\bullet&\bullet&\bullet&\bullet&\bullet\\
\end{array}& \hspace{.3in}&
\begin{array}{lllllll}
       &        &\circ&\circ&&&\\
\circ&\circ&\circ&\circ&\circ&\circ\\
\circ&\circ&\circ&\circ&\circ&\circ\\
\circ&\bullet&\bullet&\bullet&\bullet&\bullet\\
\bullet&\bullet&\bullet&\bullet&\bullet&\bullet\\
\end{array}&\hspace{.3in}&
 \begin{array}{lllllll}
       &        &\circ&\circ&       &       &\\
\circ&\circ&\circ&\circ&\circ&\circ\\
\circ&\circ&\circ&\circ&\circ&\circ\\
\circ&\circ&\bullet&\bullet&\bullet&\circ\\
\bullet&\bullet&\bullet&\bullet&\bullet&\bullet\\
\end{array}\\
g=7&&g=9&&g=11
\end{array}$$
\caption{Vertices of $\mathcal{X}$, labeled by $\circ$, as a subset of level 6 of $\mathcal{D}$ for different values of $g$.}
\end{figure}

\end{example}

\begin{lemma}\label{strong eta<theta1}
Let $P=((p+g+2)^N,p^M)$ such that $g\geq 1$. Suppose that $\eta, \theta$, $\epsilon$ and $d$ are integers such that $1\leq \eta \leq M$, $1\leq \theta \leq N$, $0\leq \epsilon <p+g$ and $0\leq d \leq p+g-\epsilon$. Assume that $p\eta\leq (p+g)\theta-\epsilon$. Consider the following subposet of $\mathcal{D}_P$:
\begin{equation}\label{D'}
\begin{array}{lll}
\mathcal{D'}&=&\{(x,p,k)\,|\, 1\leq x\leq p, 1\leq k\leq \eta\} \\ &\cup&\{(y+1,p+g+2,k')\,|\, 1\leq y \leq d, 1\leq k' \leq \theta\}\\&\cup&\{(y+1,p+g+2,k')\,|\, d+1\leq y \leq d+\epsilon, 1\leq k' \leq \theta-1\}\\ &\cup&\{(y+1,p+g+2,k')\,|\, d+\epsilon+1\leq y \leq p+g, 1\leq k'\leq \theta\}.\end{array}
\end{equation}

Write $p\eta=(p+g)a-b$ such that $a\geq 1$ and $0\leq b<p+g$.

\begin{itemize}
\item[(a)] If $0\leq b < \epsilon$ and  
$$\begin{array}{ll}
\mathcal{X}&=\{(x,p,k)\,|\, 1\leq x\leq p, 1\leq k\leq \eta\}, \mbox{ and }\\
\mathcal{Y}&=\{(y+1,p+g+2,k')\,|\, y \in\{1, \cdots, d+\epsilon-b,d+\epsilon+1, \cdots, p+g\} \mbox{ and } k' \in \{1, \cdots, a\}\} \\ &\cup \{(y+1,p+g+2,k')\,|\, y \in\{d+\epsilon-b+1,\cdots, d+\epsilon\} \mbox{ and } k' \in \{1, \cdots, a-1\}\},
\end{array}$$
then $\mathcal{X}\cup\mathcal{Y}$ is union of $p\eta$ disjoint antichains in $\mathcal{D}_P$, each of length 2. 
\item[(b)] If $\epsilon \leq b < d+\epsilon$ and  
$$\begin{array}{ll}
\mathcal{X}&=\{(x,p,k)\,|\, 1\leq x\leq p, 1\leq k\leq \eta\}, \mbox{ and }\\
\mathcal{Y}&=\{(y+1,p+g+2,k')\,|\, y \in\{1, \cdots, b-\epsilon ,d+1, \cdots, d+\epsilon\} \mbox{ and } k' \in \{1, \cdots, a-1\}\} \\ &\cup \{(y+1,p+g+2,k')\,|\, y \in\{b-\epsilon+1,\cdots, d, d+\epsilon+1, \cdots, p+g\} \mbox{ and } k' \in \{1, \cdots, a\}\},
\end{array}$$
then $\mathcal{X}\cup\mathcal{Y}$ is union of $p\eta$ disjoint antichains in $\mathcal{D}_P$, each of length 2. 

\item[(c)] If $d+ \epsilon \leq b$ and 
$$\begin{array}{ll}
\mathcal{X}&=\{(x,p,k)\,|\, 1\leq x\leq p, 1\leq k\leq \eta\}, \mbox{ and }\\
\mathcal{Y}&=\{(y+1,p+g+2,k')\,|\, y \in\{1, \cdots, d+\epsilon, b'+d+\epsilon+1, \cdots, p+g\} \mbox{ and } k' \in \{1, \cdots, a-1\}\} \\ &\cup \{(y+1,p+g+2,k')\,|\, y \in\{d+\epsilon+1,\cdots, b'+d+\epsilon\} \mbox{ and } k' \in \{1, \cdots, a\}\},
\end{array}$$
such that $b'=p+g-b$. Then $\mathcal{X}\cup\mathcal{Y}$ is union of $p\eta$ disjoint antichains in $\mathcal{D}_P$, each of length 2. 
\end{itemize}
\end{lemma}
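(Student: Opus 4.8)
The plan is to follow the proof of Lemma~\ref{strong theta<eta1} essentially verbatim, interchanging the roles of the two levels $p$ and $p+g+2$ (and correspondingly of $\eta$ and $\theta$, and of $\mathcal{X}$ and $\mathcal{Y}$). By Remark~\ref{compare} the elements of a single level of $\mathcal{D}_P$ are totally ordered (by their first coordinate, then by their third), so in each of (a)--(c) the sets $\mathcal{X}$ and $\mathcal{Y}$ are chains; a direct count using $p\eta=(p+g)a-b$ shows $|\mathcal{X}|=|\mathcal{Y}|=p\eta$. Writing $\mathcal{X}=\{X_1<\cdots<X_{p\eta}\}$ and $\mathcal{Y}=\{Y_1<\cdots<Y_{p\eta}\}$, the $p\eta$ two-element sets $A_c=\{X_c,Y_c\}$ are pairwise disjoint with union $\mathcal{X}\cup\mathcal{Y}$ (the $X_c$ lie in level $p$ and the $Y_c$ in level $p+g+2$), so it suffices to prove that each $A_c$ is an antichain. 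Writing $X_c=(x_c,p,k_c)$ and $Y_c=(y_c+1,\,p+g+2,\,k'_c)$, Lemma~\ref{antichain condition}(a) reduces this to the single inequality
\[
x_c\le y_c\le x_c+g.
\]

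Before the main estimate I would record two consequences of the hypotheses. First, $\mathcal{X}\cup\mathcal{Y}\subseteq\mathcal{D'}$: from $p\eta\le(p+g)\theta-\epsilon$, $p\eta=(p+g)a-b$, and $0\le b<p+g$ one obtains $a\le\theta$, and in fact $a\le\theta-1$ in case (a) (where $b<\epsilon$). In case (a) every column of $\mathcal{Y}$ then fits, while in cases (b) and (c) the reduced-region columns $\{d+1,\dots,d+\epsilon\}$ (over which only $\theta-1$ vertices of $\mathcal{D'}$ are available) all receive height $a-1$ and the height-$a$ columns avoid that region; so no column of $\mathcal{Y}$ exceeds the available height. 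Second, since $p\eta/(p+g)<\eta$ and $\eta\in\mathbb{N}$, the same identities give $a=\lceil p\eta/(p+g)\rceil\le\eta$, equivalently $b\le g\eta$. As $\mathcal{X}$ is the full $p\times\eta$ block we have $x_c=\lceil c/\eta\rceil$, so the target inequality $x_c\le y_c\le x_c+g$ is equivalent to
\[
\eta(y_c-g-1)<c\le\eta\,y_c.
\]

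To verify this pair of bounds I would use the explicit description of $\mathcal{Y}$ in (a), (b), (c) to write $c$ exactly in terms of the column index $y_c$ and the height $k'_c$, exactly as the displayed piecewise formulas for $c$ do in the proof of Lemma~\ref{strong theta<eta1}: each formula is the partial sum of the column heights ($a$ or $a-1$) of $\mathcal{Y}$ through column $y_c-1$, plus $k'_c$. On each interval this bounds $c$ between the partial sums before and through column $y_c$; one then checks $c\le\eta y_c$ using $a\le\eta$, and $c>\eta(y_c-g-1)$ using $a\le\eta$, $b\le g\eta$, and the bounds $y_c\le p+g$ and $d+\epsilon\le p+g$, the binding subcase collapsing to the trivial inequality $\eta-a+1\ge0$. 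This gives $x_c\le y_c\le x_c+g$ for every $c$, whence each $A_c$ is a length-$2$ antichain and $\mathcal{X}\cup\mathcal{Y}$ is the asserted union of $p\eta$ disjoint antichains.

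The only genuine work, and the main source of possible error, is the bookkeeping of these partial sums across the three cases. Cases (a), (b), (c) correspond precisely to the three positions $b<\epsilon$, $\epsilon\le b<d+\epsilon$, and $d+\epsilon\le b$ of the $b$ short columns of $\mathcal{Y}$ relative to the gap $\{d+1,\dots,d+\epsilon\}$ and the split at $d$, and in each case one must track exactly which columns receive height $a$ and which receive $a-1$. Conceptually nothing beyond Lemma~\ref{strong theta<eta1} is required, whose proof this one mirrors line for line.
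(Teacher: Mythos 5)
Your proposal is correct and follows essentially the same route as the paper's own proof: order $\mathcal{X}$ and $\mathcal{Y}$ as chains, pair them into $A_c=\{X_c,Y_c\}$, reduce via Lemma~\ref{antichain condition}(a) to $x_c\le y_c\le x_c+g$ with $x_c=\lceil c/\eta\rceil$, and verify this from the partial-sum bounds $a(y_c-1)-b<c\le a\,y_c$ together with $a\le\eta$, $b\le g\eta$, $a\le\theta$, and $y_c\le p+g$. In fact you are slightly more careful than the paper on one point: in case (a) the containment $\mathcal{Y}\subseteq\mathcal{D}'$ genuinely requires $a\le\theta-1$ (which follows from $b<\epsilon$), whereas the paper only records $\theta\ge a$.
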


\begin{proof}

Since by assumption $a\geq 1$ and $b< p+g$, we must have $ga-b>-p$. On the other hand, $ga-b=p(\eta-a)$ is divisible by $p$. Thus $ga-b\geq 0$ and therefore \begin{equation}\label{b/g<a<eta}
0\leq \frac{b}{g} \leq a \leq \eta.
\end{equation} 
We also have 
\begin{equation}\label{g a/eta}
\frac{a}{\eta}=\frac{p}{p+g}+\frac{b}{\eta(p+g)}=1-\frac{g}{p+g}(1-\frac{b}{g\eta}). 
\end{equation}
\medskip

To prove the lemma, in each case we write $\mathcal{X}=\{X_1, \cdots, X_{p\eta}\}$ and $\mathcal{Y}=\{Y_1, \cdots, Y_{p\eta}\}$, such that $X_1<\cdots<X_{p\eta}$ and $Y_1<\cdots<Y_{p\eta}$, and prove that $A_c=\{X_c,Y_c\}$ is an antichain in $\mathcal{D}_P$ for $1\leq c \leq p\eta$. 

For $c\in \{1, \cdots, p\eta\}$. We can write $X_c=(x_c, p, c-\eta(x_c-1))$ with $x_c=\lceil\frac{c}{\eta}\rceil$. We also write $Y_c=(y_c+1,p+g+2,k_c)$. By Lemma~\ref{antichain condition}, to prove that $A_c$ is an antichain, we need to show that 
\begin{equation}\label{y-g<x<y}
y_c-g\leq x_c \leq y_c.
\end{equation}

\noindent{\bf Part (a)}. We know that $(p+g)\theta-\epsilon \geq p\eta= (p+g)a-b$. So $(p+g)(\theta-a)\geq \epsilon-b\geq -b$. Since $b<p+g$, this implies that $\theta \geq a$ and therefore $\mathcal{X}\cup\mathcal{Y}\subseteq \mathcal{D'}.$ 

By the definition of $\mathcal{Y}$, the following equalities hold for $c\in \{1, \cdots, p\eta\}$: $$c=\left\{\begin{array}{lll}(y_c-1)a+k_c&&\mbox{if } 1\leq y_c \leq d+\epsilon -b,\\(y_c-1)(a-1)+d+\epsilon-b+k_c&&\mbox{if } d+\epsilon-b+1\leq y_c\leq d+\epsilon,\\(y_c-1)a-b+k_c&&\mbox{if } d+\epsilon+1\leq y_c\leq p+g.\end{array}\right.$$ 
Also $k_c\in \{1, \cdots, a-1\}$ if $d+\epsilon-b+1\leq y_c \leq d+\epsilon$ and $k_c\in \{1, \cdots, a\}$, otherwise.

So for all $c$, we have $a(y_c-1)-b<c\leq a \, y_c $. Thus $$\frac{a(y_c-1)-b}{\eta}<\frac{c}{\eta}\leq \left(\frac{a}{\eta}\right)\, y_c$$

By inequalities~\ref{b/g<a<eta}, $\left(\frac{a}{\eta}\right)\, y_c\leq y_c$. On the other hand, 
$$\begin{array}{llll}
\left(\frac{a}{\eta}\right)(y_c-1)&=(y_c-1)-g(\frac{y_c-1}{p+g})(1-\frac{b}{g\eta})&&(\mbox{ By Equation }\ref{g a/eta})\\&\geq (y_c-1)-g(1-\frac{b}{g\eta})&&(\mbox{ Since } 0\leq y_c-1<p+g \mbox{ and } b\leq g\eta)\\
&=y_c-g-1+\frac{b}{\eta}.
\end{array}$$

Therefore $y_c-g-1<\frac{c}{\eta}\leq y_c$ and the desired inequalities~\ref{y-g<x<y} follow.

\bigskip

{\bf Part (b)}. Since $(p+g)a-b=p\eta\leq (p+g)\theta-\epsilon$ and $b<p+g$, we get $$-(p+g)<-b\leq \epsilon-b\leq (p+g)(\theta-a).$$ 
So $\theta\geq a$ and therefore $\mathcal{X}\cup\mathcal{Y}\subseteq \mathcal{D'}$.

By the definition of $\mathcal{Y}$ in this case, the following equalities hold for $c\in \{1, \cdots, p\eta\}$: $$c=\left\{\begin{array}{lll}(y_c-1)(a-1)+k_c&&\mbox{if } 1\leq y_c \leq b-\epsilon, \\(y_c-1)a-b+\epsilon+k_c&&\mbox{if } b-\epsilon+1\leq y_c \leq d,\\(y_c-1)(a-1)+d-b+\epsilon+k_c&&\mbox{if } d+1\leq y_c\leq d+\epsilon,\\(y_c-1)a-b+k_c&&\mbox{if } d+\epsilon+1\leq y_c\leq p+g.\end{array}\right.$$ 
Also $k_c\in \{1, \cdots, a-1\}$ if $1\leq y_c \leq b-\epsilon$ or $d+1\leq y_c \leq d+\epsilon$ and $k_c\in \{1, \cdots, a\}$, otherwise.

Now as in the proof of Part (a) above, it is enough to note that we have the inequalities $a(y_c-1)-b<c\leq a \, y_c $ in this case as well.

\bigskip

{\bf Part (c)}. The proof is similar to the proof of the previous two parts. First we note that $(p+g)(\theta-a)\geq b-\epsilon>-(p+g)$. Therefore $\theta\geq a$ and $\mathcal{X}\cup\mathcal{Y}\subseteq \mathcal{D'}$.

Then we use the definition of $\mathcal{Y}$ in this case to get $$c=\left\{\begin{array}{lll}(y_c-1)(a-1)+k_c&&\mbox{if } 1\leq y_c \leq d+\epsilon, \\(y_c-1)a-d-\epsilon+k_c&&\mbox{if } d+\epsilon+1\leq y_c \leq d+\epsilon+b',\\(y-1)(a-1)+b'+k_c&&\mbox{if } d+\epsilon+b'+1\leq y_c\leq p+g.\end{array}\right.$$ We also have $k_c\in \{1, \cdots, a\}$ if $d+\epsilon+1\leq y_c \leq d+\epsilon+b'$ and $k_c\in \{1, \cdots, a-1\}$, otherwise.

It is then easy to see that the inequalities $a(y_c-1)-b<c\leq a \, y_c $ hold in this case too.

\end{proof}

\begin{example}\label{eta<thetaexample}
Let $p=2$, $g=6$, $d=2$, $\epsilon=3$ and $\theta=2$. Assume that $\mathcal{D'}$ is the set defined by Formula~\ref{D'}. 

Using the same notations as in Lemma~\ref{strong eta<theta1}, in Figure 5 we show the vertices of $\mathcal{Y}$ as a subset of level $8$ of $\mathcal{D'}$, for a few different values of $\eta$. In Figure 5, each dot represents a vertex in level 8 of $\mathcal{D'}$, and for each value of $\eta$ the vertices of $\mathcal{Y}$ are hollowed ($\circ$) to distinguish them from other vertices of $\mathcal{D'}$ ($\bullet$). 

\begin{figure}
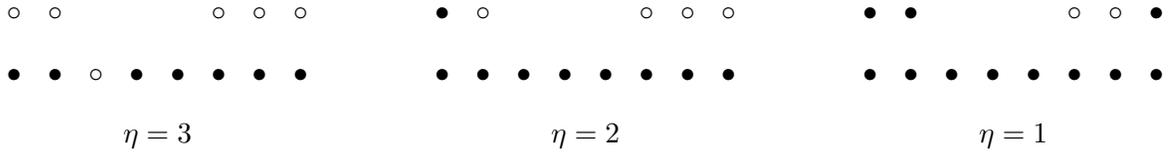
\label{shekl D'}
$$\begin{array}{ccccc}
\begin{array}{lllllllll}
\circ   &\circ   &           &           &           &\circ    &\circ   &\circ\\
\bullet&\bullet&\circ   &\bullet&\bullet&\bullet&\bullet&\bullet
\end{array}
& \hspace{.25in}&
\begin{array}{lllllllll}
\bullet &\circ   &           &           &           &\circ    &\circ   &\circ\\
\bullet&\bullet&\bullet  &\bullet&\bullet&\bullet&\bullet&\bullet
\end{array}
&\hspace{.25in}&
\begin{array}{lllllllll}
\bullet &\bullet&           &           &           &\circ    &\circ   &\bullet\\
\bullet&\bullet&\bullet  &\bullet&\bullet&\bullet&\bullet&\bullet
\end{array}
\\
\eta=3&&\eta=2&&\eta=1
\end{array}$$
\caption{Vertices of $\mathcal{Y}$, labeled by $\circ$, as a subset of level 8 of $\mathcal{D'}$ for different values of $\eta$.}
\end{figure}
\begin{itemize}
 \item[(a)] If $\eta=3$. Then $2\eta=6=8(1)-2$ and $a=1$ and $b=2$. So we use the method in part (a) of  Lemma~\ref{strong eta<theta1} to obtain $\mathcal{Y}$.
 
 \item[(b)] If $\eta=2$. Then $2\eta=4=8(1)-4$ and $a=1$ and $b=4$. Then we use the method in part (b) of  Lemma~\ref{strong eta<theta1} to obtain $\mathcal{Y}$.
 
\item[(c)] If $\eta=1$. Then $2\eta=2=8(1)-6$ and $a=1$ and $b=6$. Then we use the method in part (c) of  Lemma~\ref{strong eta<theta1} to obtain $\mathcal{Y}$.

\end{itemize}
\end{example}

\begin{lemma}\label{strong theta<eta}
Let $P=((p+g+2)^N,p^M)$ such that $g\geq 1$. Suppose that $\eta, \theta$, $\delta$ and $d$ are integers such that $0 \leq \eta < M$, $1\leq \theta \leq N$ , $0\leq  \delta < p$, and $0\leq d \leq p-\delta$. Consider the following subposet of $\mathcal{D}_P$:
\begin{equation}
\begin{array}{lll}
\mathcal{D}&=&\{(x,p,k)\,|\, 1\leq x \leq d, 1\leq k\leq \eta\}\\ &\cup&\{(x,p,k)\,|\, d+1\leq x \leq d+\delta, 1\leq k\leq \eta+1\}\\ &\cup&\{(x,p,k)\,|\, d+\delta+1\leq x \leq p, 1\leq k\leq  \eta\}\\ &\cup& \{(y+1,p+g+2,k')\,|\, 1\leq y \leq p+g, 1\leq k'\leq \theta\}.\end{array}
\end{equation}

Let $\mu=\min\{p\eta+\delta, (p+g)\theta\}$. Then there exists a set $\mathcal{A}$ of disjoint antichains of length 2 in $\mathcal{D}$ such that the following hold for $\bar{\mathcal{A}}=\mathcal{D}\setminus \mathcal{A}$:

\begin{itemize}
\item[(a)] If $(p+g)\theta \leq p\eta+\delta$ then 
\begin{itemize}
\item[(i)] For all $1\leq y \leq p+g$, $m_{\bar{\mathcal{A}}}(y+1,p+3)=0$; and  
\item[(ii)] There exist integers $\eta'$, $\delta'$ and $d'$, satisfying $0\leq \eta'$, $0\leq \delta'< p$ and $0\leq d'\leq p-\delta'$, such that $$m_{\bar{\mathcal{A}}}(x,p)=\left\{\begin{array}{lll} \eta'+1&&\mbox{ if } d'+1\leq x \leq d'+\delta',\\ \eta'&&\mbox{ otherwise}.\end{array} \right.$$
\end{itemize}

\item[(b)] If $p\eta+\delta\leq (p+g)\theta$ then 
\begin{itemize}
\item[(i)] For all $1\leq x \leq p$, $m_{\bar{\mathcal{A}}}(x,p)=0$; and  
\item[(ii)] There exist integers $\theta'$, $\epsilon'$ and $d'$, satisfying $0< \theta'$, $0\leq \epsilon' <p+g$ and $0\leq d' \leq p+q-\epsilon'$, such that $$m_{\bar{\mathcal{A}}}(y,p+3)=\left\{\begin{array}{lll} \theta'-1&&\mbox{ if } d'+1\leq y \leq d'+\epsilon',\\ \theta'&&\mbox{ otherwise}.\end{array} \right.$$
\end{itemize}

\end{itemize}
\end{lemma}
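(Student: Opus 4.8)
The plan is to treat the two cases (a) and (b) by invoking the explicit constructions of Lemmas~\ref{strong theta<eta1} and~\ref{strong eta<theta1} and then simply reading off the multiplicities that remain. The organizing observation is that, by Remark~\ref{compare}, any two vertices of $\mathcal{D}$ in the \emph{same} level are comparable; hence every length-$2$ antichain consists of exactly one vertex at level $p$ and one at level $p+g+2$. Therefore the number of pairwise disjoint length-$2$ antichains is at most $\mu=\min\{p\eta+\delta,(p+g)\theta\}$, and in each case the aim is to realize exactly $\mu$ of them, thereby emptying the smaller level; emptying that level is precisely what yields conclusion (i), and the induced pattern on the larger level is conclusion (ii).

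In case (a), where $(p+g)\theta\leq p\eta+\delta$, the hypothesis of Lemma~\ref{strong theta<eta1} is exactly our hypothesis, so I would take $\mathcal{A}$ to be the $(p+g)\theta$ antichains produced there. Since the set $\mathcal{Y}$ of that lemma is the \emph{entire} upper level, statement (a)(i) is immediate. For (a)(ii) I would compute $\bar{\mathcal{A}}\cap\{\text{level }p\}=\{\text{level }p\}\setminus\mathcal{X}$ column by column: writing $g\theta=pa+b$ and using that $\mathcal{X}$ fills the bottom $\theta+a$ or $\theta+a+1$ rows of each column, the leftover occupies the top rows, with height equal to the original staircase height ($\eta$ or $\eta+1$) minus the consumed amount. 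Setting $\eta'$ to be the smaller of the two resulting heights, one checks in each of the three subcases of Lemma~\ref{strong theta<eta1} (governed by whether $b\leq\delta$, $\delta<b\leq\delta+d$, or $\delta+d<b$) that the taller columns form a single contiguous block, so the leftover multiplicity sequence again has the staircase shape $(\eta'^{\,d'},(\eta'+1)^{\delta'},\eta'^{\,p-d'-\delta'})$, with $d',\delta'$ read off explicitly and the constraints $0\leq\delta'<p$, $0\leq d'\leq p-\delta'$ verified from the defining inequalities of each subcase.

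In case (b), where $p\eta+\delta\leq(p+g)\theta$, the smaller level is the lower staircase, but it is not rectangular, so Lemma~\ref{strong eta<theta1} does not apply verbatim. I would first pair off the $\delta$ ``bump'' vertices $(d+j,p,\eta+1)$, $1\leq j\leq\delta$, each with the top vertex of upper column-index $d+j$; the antichain condition of Lemma~\ref{antichain condition} holds since there $x=d+j$ and the upper first-coordinate is $d+j+1$, giving $d+j<d+j+1<(g+2)+(d+j)$. These $\delta$ antichains consume the bump (reducing the lower level to the rectangle of height $\eta$) and carve a depth-one notch into the upper level in columns $d+1,\dots,d+\delta$, producing exactly the configuration $\mathcal{D'}$ of Lemma~\ref{strong eta<theta1} with $\epsilon=\delta$. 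As $p\eta\leq(p+g)\theta-\delta$ in this case, that lemma then consumes the remaining rectangle in $p\eta$ further antichains, for a total of $p\eta+\delta=\mu$; this empties level $p$, giving (b)(i), while the upper leftover inherits the notch shape $(\theta'^{\,d'},(\theta'-1)^{\epsilon'},\theta'^{\,\dots})$ by the same top-of-column bookkeeping as in case (a).

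The routine but delicate part of the argument is this leftover bookkeeping: in both cases one must verify, across the three subcases controlling where $b$ (respectively the analogous remainder) falls, that subtracting the consumed block from the original column heights leaves a \emph{contiguous} over- or under-filled band, so that the result is again of the prescribed staircase or notch type. This is where the deliberate choice—illustrated in Example~\ref{theta<etaexample}—of consuming vertices from the bottom of each column, so that the untouched top rows preserve the original pattern, does the real work, and it is the step I expect to require the most care to state and check cleanly.
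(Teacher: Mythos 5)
Your proposal is correct and follows essentially the same route as the paper: part (a) is obtained by applying Lemma~\ref{strong theta<eta1} directly (the paper then records the resulting $\eta',d',\delta'$ in a table, which is exactly your ``leftover bookkeeping'' across the three subcases of $b$), and part (b) is obtained by first pairing the $\delta$ bump vertices $(d+j,p,\cdot)$ with upper-level vertices in columns $d+j+1$ and then invoking Lemma~\ref{strong eta<theta1} with $\epsilon=\delta$ and the same $d$, which is precisely the paper's reduction. The only differences are cosmetic (the paper pairs the bump with bottom vertices $(c,p,1)$ rather than top ones, which is immaterial since only the multiplicities $m_{\bar{\mathcal{A}}}$ enter the conclusion).
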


\begin{proof} 

\noindent{\bf Part (a)} This is a consequence of Lemma~\ref{strong theta<eta1} which also gives the following values for $\eta'$, $d'$ and $\delta'$. 
$$\begin{array}{c||c|c|c}
&0\leq b\leq \delta&\delta <b\leq \delta+d&\delta+d<b\\
\hline 
\hline
\eta'&\eta-\theta-a&\eta-\theta-a-1&\eta-\theta-a-1\\
\hline
d'&d&b-\delta&d\\
\hline
\delta'&\delta-b&p-b+\delta&p-b+\delta
\end{array}$$

Here $g\theta=pa+b$ such that $a\geq 0$ and $0\leq b<p$.

\bigskip

\noindent{\bf Part (b)} For $d+1\leq c\leq d+\delta$, let $A_c=\{(c,p,1),(c+1,p+g+2,1)\}$. Then $\{A_c\,|\, d+1\leq c \leq d+\delta\}$ is a set of $\delta$ disjoint antichains of length 2 in $\mathcal{D}$. Let $\mathcal{D}'=\mathcal{D}\setminus \{v\,|\, v\in A_c, \mbox{ for some } d+1 \leq c \leq d+\delta\}.$ So the desired statement is a consequence Lemma~\ref{strong eta<theta1} with $\epsilon=\delta$ and the same $d$. We also get the following values for $\theta'$, $d'$ and $\epsilon'$.

$$\begin{array}{c||c|c|c}
&0\leq b< \delta&\delta \leq b< \delta+d&\delta+d\leq b\\
\hline
\hline
\theta'&\theta-a&\theta-a+1&\theta-a+1\\
\hline
d'&d&b-\delta&d\\
\hline
\epsilon'&\delta-b&p+g-b+\delta&p+g-b+\delta
\end{array}$$

Here $p\eta=(p+g)a-b$ such that $a\geq 1$ and $0\leq b<p+g$.

\end{proof}

\begin{lemma}\label{strong eta<theta}
Let $P=((p+g+2)^N,p^M)$ such that $g\geq 1$. Suppose that $\eta, \theta$, $\epsilon$ and $d$ are integers such that $1\leq \eta \leq M$, $1\leq \theta \leq N$, $0\leq \epsilon <p+g$ and $0\leq d \leq p+g-\epsilon$. Consider the following subposet of $\mathcal{D}_P$:
\begin{equation}
\begin{array}{lll}
\mathcal{D'}&=&\{(x,p,k)\,|\, 1\leq x\leq p, 1\leq k\leq \eta\} \\ &\cup&\{(y+1,p+g+2,k')\,|\, 1\leq y \leq d, 1\leq k' \leq \theta\}\\&\cup&\{(y+1,p+g+2,k')\,|\, d+1\leq y \leq d+\epsilon, 1\leq k' \leq \theta-1\}\\ &\cup&\{(y+1,p+3,k')\,|\, d+\epsilon+1\leq y \leq p+g, 1\leq k'\leq \theta\}.\end{array}
\end{equation}

Let $\mu=\min\{p\eta, (p+g)\theta-\epsilon\}$. Then there exists a set $\mathcal{A}$ of disjoint antichains of length 2 in $\mathcal{D}'$ such that the following hold for $\bar{\mathcal{A}}=\mathcal{D}'\setminus \mathcal{A}$:

\begin{itemize}
\item[(a)] If $p\eta\leq (p+g)\theta-\epsilon$ then 
\begin{itemize}
\item[(i)] For all $1\leq x \leq p$, $m_{\bar{\mathcal{A}}}(x,p)=0$; and  
\item[(ii)] There exist integers $\theta'$ $\epsilon'$ and $d'$, satisfying $0< \theta'$, $0\leq \epsilon' < p+g$ and $0\leq d'\leq p+g-\epsilon'$ such that $$m_{\bar{\mathcal{A}}}(y+1,p+g+2)=\left\{\begin{array}{lll} \theta'-1&&\mbox{ if } d'+1\leq y \leq d'+\epsilon',\\ \theta'&&\mbox{ otherwise}.\end{array} \right.$$
\end{itemize}

\item[(b)] If $(p+g)\theta-\epsilon\leq p\eta$ then 
\begin{itemize}
\item[(i)] For all $1\leq y \leq p+g$, $m_{\bar{\mathcal{A}}}(y+1,p+g+2)=0$; and  
\item[(ii)] There exist integers $\eta'$, $\delta'$ and $d'$, satisfying $0\leq  \eta'$, $0\leq \delta' <  p$, and $0\leq d'\leq p-\delta'$, such that $$m_{\bar{\mathcal{A}}}(x,p)=\left\{\begin{array}{lll} \eta'+1&&\mbox{ if } d'+1\leq x \leq d'+\delta',\\ \eta'&&\mbox{ otherwise}.\end{array} \right.$$
\end{itemize}

\end{itemize}
\end{lemma}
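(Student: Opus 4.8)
The plan is to prove the two cases by reducing each to one of the two preceding ``atomic'' lemmas, exactly mirroring the structure of the proof of Lemma~\ref{strong theta<eta}. The common thread is the observation (Lemma~\ref{antichain condition}(a)) that, for a lower vertex $(x,p,k)$ and an upper vertex $(y+1,p+g+2,k')$, being an antichain depends \emph{only} on the columns, namely $x\le y\le x+g$, and not on the heights $k,k'$. Consequently, building $\mathcal{A}$ is a purely column-wise matching problem on the interval bipartite graph $x\sim y\Leftrightarrow x\le y\le x+g$, and the two conclusions (i)/(ii) amount to saying that one level is completely consumed while the per-column residual multiplicities on the other level form a depth-one ``valley'' (a dent) or a height-one ``bump''.

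For \textbf{Part (a)} ($p\eta\le (p+g)\theta-\epsilon$), the poset $\mathcal{D'}$ is literally the poset of Lemma~\ref{strong eta<theta1}, and the hypothesis is exactly that lemma's hypothesis. I would therefore take $\mathcal{A}$ to be the set of $p\eta$ disjoint length-$2$ antichains produced there. Since the lower part $\mathcal{X}$ of that decomposition is the \emph{entire} level $p$, no level-$p$ vertex survives, which is precisely (i). For (ii) I would subtract, column by column, the part $\mathcal{Y}$ used on level $p+g+2$ from the original heights in each of the three sub-cases of Lemma~\ref{strong eta<theta1}; a direct computation (writing $p\eta=(p+g)a-b$ with $a\ge 1$ and $0\le b<p+g$) shows the surviving heights take only the two adjacent values $\theta-a$ and $\theta-a-1$ on a contiguous block, i.e. a dent, yielding explicit values $\theta'=\theta-a$, $\epsilon'$ and $d'$ in a small table analogous to the ones in the proof of Lemma~\ref{strong theta<eta}.

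For \textbf{Part (b)} ($(p+g)\theta-\epsilon\le p\eta$) the level to be exhausted is the \emph{dented} upper level, so neither atomic lemma applies verbatim; this is the crux. My plan is a preliminary ``peeling'' step that first removes the dent. Writing $(p+g)-\epsilon=pt+s$ with $0\le s<p$, I would pair each of the $(p+g)-\epsilon$ top vertices of the non-dent columns (those at height $\theta$) with a level-$p$ vertex, using the interval compatibility $x\le y\le x+g$ and the height-independence above, choosing the partners so that the numbers $m(x)$ of vertices removed from the columns of level $p$ take only the values $t$ and $t+1$, with the $s$ columns carrying $t+1$ forming a contiguous block. After this step the upper level is \emph{full} of height $\theta-1$, while level $p$ has become a \emph{bump} of height $\eta-t-1$ and width $p-s$ (here I use the elementary identity that a depth-one dent of width $s$ equals a height-one bump of width $p-s$). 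This is exactly the configuration of Lemma~\ref{strong theta<eta1}, and its hypothesis $(p+g)(\theta-1)\le p(\eta-t-1)+(p-s)$ is algebraically equivalent to $(p+g)\theta-\epsilon\le p\eta$. Invoking Lemma~\ref{strong theta<eta1} then yields $(p+g)(\theta-1)$ further disjoint length-$2$ antichains exhausting the reduced upper level; together with the $(p+g)-\epsilon$ peeling antichains these exhaust the original upper level, giving (i), and the residual on level $p$ is the bump of (ii), with $\eta',\delta',d'$ read off from Lemma~\ref{strong theta<eta1}'s output (equivalently, from the table in Lemma~\ref{strong theta<eta}(a)).

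The main obstacle is the feasibility of the peeling step in Part (b): I must exhibit, on the interval bigraph, a compatible assignment of the $(p+g)-\epsilon$ non-dent top vertices to level-$p$ columns realizing the prescribed contiguous ``valley'' removal pattern $m(x)\in\{t,t+1\}$. This is a Hall/transportation condition, and verifying it --- coordinating the position of the original dent (governed by $d,\epsilon$) with the position of the block of size $s$ and with the windows $[\max(1,y-g),\min(p,y)]$ available to each peeled column --- is the only genuinely combinatorial part. I expect to handle it by the same explicit, ``as uniform as possible'' bookkeeping already used to build $\mathcal{X}$ and $\mathcal{Y}$ in Lemmas~\ref{strong theta<eta1} and~\ref{strong eta<theta1}, so that once the pattern is produced the two invoked lemmas do the rest.
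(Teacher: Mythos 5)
Your Part (a) is correct and is exactly the paper's argument: under $p\eta\le(p+g)\theta-\epsilon$ the poset $\mathcal{D}'$ is precisely the setting of Lemma~\ref{strong eta<theta1}, all of level $p$ is consumed, and the residual dent on the top level is read off from that lemma's three cases (the paper records the resulting $\theta',d',\epsilon'$ in a table). The gap is in Part (b), and it is twofold. First, the ``elementary identity'' you invoke --- that a depth-one dent of width $s$ equals a height-one bump of width $p-s$ --- is false unless the dent sits at an end of $[1,p]$: if the $s$ columns losing $t+1$ vertices form a contiguous \emph{interior} block, the surviving higher columns split into two separated intervals, which is not the configuration of Lemma~\ref{strong theta<eta1} (whose bump $[d+1,d+\delta]$ is contiguous). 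Moreover, your prescribed removal pattern is in general infeasible: take $p=3$, $g=1$, $\epsilon=2$, $d=1$, $\eta=\theta=1$. The non-dent upper columns are $y=1$ and $y=4$, and the compatibility condition $x\le y\le x+g$ forces their partners to be the lower columns $x=1$ and $x=3$ respectively, so the removal pattern is $(1,0,1)$; the two columns losing $t+1=1$ are $\{1,3\}$, which is not contiguous and cannot be made so. What is feasible, and what conclusion (b)(ii) actually requires, is the complementary prescription: the columns \emph{keeping} the higher residual height form a contiguous block, i.e.\ the residual is a contiguous bump --- here $(0,1,0)$.

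Second, even with the pattern corrected, the feasibility of the peeling step is the entire content of the proof, and you have deferred it (``I expect to handle it by the same bookkeeping''). This is exactly where the paper's proof of (b) does its work: it splits into the case $\epsilon\ge g$, where each non-dent upper column can be matched injectively into level $p$ (shift $0$ on the prefix, shift $g$ on the suffix), leaving a contiguous bump of width $\epsilon-g$ so that Lemma~\ref{strong theta<eta1} applies with $\delta=\epsilon-g$; and the case $\epsilon<g$, where $p+g-\epsilon>p$ forces some lower columns to absorb several peeled vertices, handled by an explicit wrap-around assignment $c=pk_c+\ell_c\mapsto(\ell_c,p,k_c+1)$ with further sub-cases $d>g-\epsilon$ and $d\le g-\epsilon$, after which Part (a) of Lemma~\ref{strong theta<eta} (not the atomic lemma directly) is invoked with shifted parameters $\eta-k-1$, $d=\ell$, $\delta=p-\ell$, where $g-\epsilon=pk+\ell$. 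Your algebraic check that $(p+g)(\theta-1)\le p(\eta-t-1)+(p-s)$ is equivalent to $(p+g)\theta-\epsilon\le p\eta$ is right, so your reduction would close the proof once such a matching is exhibited; but exhibiting it --- a Hall-type statement for this interval bigraph with the dent and the contiguity constraint --- is the theorem, not a detail to be filled in by analogy.
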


\begin{proof} 

\noindent{\bf Part (a)} This is a consequence of Lemma~\ref{strong eta<theta1} which also gives the following values for $\theta'$, $d'$ and $\epsilon'$. 
$$\begin{array}{c||c|c|c}
&0\leq b< \epsilon&\epsilon \leq b< \epsilon+d&\epsilon+d\leq b\\
\hline
\hline
\theta'&\theta-a&\theta-a+1&\theta-a+1\\
\hline
d'&d&b-\epsilon&d\\
\hline
\epsilon'&\epsilon-b&p+g-b+\epsilon&p+g-b+\epsilon
\end{array}$$

Here $p\eta=(p+g)a-b$ such that $a\geq 1$ and $0\leq b<p+g$.
\noindent{\bf Part (b)} 

\noindent{\bf Case 1.} $\epsilon \geq g$. 

Then for $c\in\{1, \cdots, d\}$, let $A_c=\{(c-1,p,1),(c,p+g+2,1)\}$ and for $c\in\{d+\epsilon+1, \cdots, p+g\}$, let $A_c=\{(c-g-1,p,1),(c,p+g+2,1)\}$. Then $\{A_c\,|\, c\in \{1, \cdots, d, \epsilon+d+1, \cdots, p+g\}\}$ is a set of $(p+g-\epsilon)$ disjoint antichains of length 2 in $\mathcal{D}'$. Let $\mathcal{D}=\mathcal{D}'\setminus \{v\,|\, v\in A_c, \mbox{ for some } c\in \{1, \cdots, d, \epsilon+d+1, \cdots, p+g\}\}.$ So the desired statement of Part (b) is a consequence of Lemma~\ref{strong theta<eta1} with $\delta=\epsilon-g$ and the same $d$. We also get the following values for $\eta'$, $d'$ and $\delta'$.

$$\begin{array}{c||c|c|c}
&0\leq b+g\leq \epsilon&\epsilon <b+g\leq \epsilon+d&\epsilon+d<b+g\\
\hline 
\hline
\eta'&\eta-\theta-a&\eta-\theta-a-1&\eta-\theta-a-1\\
\hline
d'&d&b-\epsilon-g&d\\
\hline
\delta'&\epsilon-g-b&p-b+\epsilon-g&p-b+\epsilon-g
\end{array}$$

Here $g\theta=pa+b$ such that $a\geq 0$ and $0\leq b<p$.

\noindent{\bf Case 2.} $\epsilon<g$. 

$\bullet$ If $d> g-\epsilon $, then $|\{d+\epsilon+1, \cdots, p+g\}|< p$, and for any $c\in \{d+\epsilon+1, \cdots, p+g\}$, $A_c=\{(c-g, p, 1),(c+1, p+g+2, 1)\}$ is an antichain.  Since $\epsilon <g$, for $c\in\{g-\epsilon+1, \cdots, d\}$, $A_c=\{(c+\epsilon-g, p, 1),(c+1, p+g+2,1)\}$ is a antichain, as well. Finally, for $c\in\{1, \cdots, g-\epsilon\}$, we match the vertices $(c+1, p+g+2,1)$ with vertices in level $p$, from left to right. To do this, we write $c=pk_c+\ell_c$ such that $k_c \geq 0$ and $1\leq  \ell_c \leq p$. Then 
\begin{equation}
\ell_c \leq c \leq g-\epsilon \leq \ell_c+g.
\end{equation}
 So 
 \begin{equation}\label{left-right match}
 A_c=\{(\ell_c, p, k_c+1),(c+1, p+g+2,1)\}
\end{equation}
is an antichain for any $c\in \{1, \cdots, g-\epsilon\}$. 
Now if we set $$\mathcal{D}=\mathcal{D}'\setminus\{v\,|\, v\in A_c \mbox{ for some } c\in \{1, \cdots, d, d+\epsilon+1, \cdots, p+g\}\}, $$ then the desired statement of Part (b) is a consequence of Part (a) of Lemma~\ref{strong theta<eta} above applied to $\mathcal{D}$. Here $\eta$ is $\eta-k-1$, $d=\ell$ and $\delta=p-\ell$, where $g-\epsilon=pk+\ell$ such that $k \geq 0$ and $1\leq  \ell \leq p$.

$\bullet$ If $d \leq  g-\epsilon $, then for all $c\in\{1, \cdots, d\}$, we match the vertices $(c+1, p+g+2, 1)$ with vertices in level $p$, from left to right as we showed in Equation~\ref{left-right match} above. Suppose that $d=pk+\ell$, such that $k \geq 0$ and $1\leq  \ell \leq p$. Then $A_d=\{(\ell, p, k+1),(c+1, p+g+2,1)\}$. Since $\ell \leq d\leq g-\epsilon$, we have $\ell \leq d \leq d+\epsilon \leq g \leq \ell+g.$ 

So for each $c\in\{d+\epsilon+1, \cdots, d+\epsilon+p-\ell\}$, the set $A_c=\{(\ell+c-d-\epsilon,p, k+1),(c+1, p+g+2, 1)\}$ is an antichain. 

Finally, we will match vertices $(c+1, p+g+2, 1)$ for all $c\in\{d+\epsilon+p-\ell+1, \cdots, p+g\}$ with vertices in the level $p$, from right to left. Note that there are $g-d-\epsilon+\ell\leq g$ such $c$'s, and starting from right the matching will be similar (dual) to the one explained in Equation~\ref{left-right match} above. Let $\{A_c\,|\, c\in \{d+\epsilon+p-b+1, \cdots, p+g\}\}$ be the set of such antichains and define $$\mathcal{D}=\mathcal{D}'\setminus\{v\,|\, v\in A_c \mbox{ for some } c\in \{1, \cdots, d, d+\epsilon+1, \cdots, p+g\}\}. $$ The proof is then complete once Part (a) of Lemma~\ref{strong theta<eta} above is applied to $\mathcal{D}$ with $\eta$ equal to $\eta-k'-2$, $d=0$ and $\delta=\ell'-1$, where $g-\epsilon=pk'+\ell'$ such that $k' \geq 0$ and $1\leq  \ell' \leq p$.

\end{proof}

Now we have the necessary tools to prove the following proposition.

\begin{proposition} \label{spread min part prop}
Let the partition $P$ of $n$ be a spread. Then $\min(\lambda(P))=\mu(P).$
\end{proposition}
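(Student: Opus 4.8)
The plan is to reduce the statement to a counting problem via Greene's theorem and then bound the relevant count from both sides. By Theorem~\ref{conjugate} and the discussion following it, $\min(\lambda(P))$ equals the maximum number $M$ of pairwise \emph{disjoint} maximum antichains in $\mathcal{D}_P$, so I would prove the two inequalities $M\le\mu(P)$ and $M\ge\mu(P)$ separately. Throughout I write $p=p_1$ and $r=r(P)$, and I invoke the classification in Lemma~\ref{little antichain lemma}: every maximum antichain has exactly $r$ elements, its column positions are rigid once a shift $x\in\{1,\dots,p\}$, a transition index $t$, and a jump parameter $\epsilon\in\{0,1\}$ are fixed, and in the even case it has the form $O^t(x,\mathbf{k})\cup E^t(x+\epsilon,\mathbf{k}')$.

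For the upper bound I would argue by a bottleneck. Fix a pair $(i,j)$ with $i\le j$ attaining the minimum in Definition~\ref{mu defn}, so that $\mu(P)=p\,n_{2i-1}+(p+1)\,n_{2j}$ in the even case. Call a vertex \emph{marked} if it sits at odd level $2i-1$ in one of the $p$ columns $i,\dots,p+i-1$, or at even level $2j$ in one of the $p+1$ columns $j,\dots,p+j$; there are exactly $\mu(P)$ marked vertices. The key point is that every maximum antichain $A=O^t(x,\mathbf{k})\cup E^t(x+\epsilon,\mathbf{k}')$ meets the marked set: if $t\ge i$ then $A$ uses odd level $2i-1$ through $O^t$, and since $x\in\{1,\dots,p\}$ the corresponding column is marked; if instead $t<i$, then $t<i\le j$ forces $j\ge t+1$, so $A$ uses even level $2j$ through $E^t$, and since $x\in\{1,\dots,p\}$ and $\epsilon\in\{0,1\}$ the column lies in $\{j,\dots,p+j\}$ and is marked. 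As disjoint antichains use distinct vertices, $M\le\mu(P)$. In the odd case each maximum antichain is $O^r(x,\mathbf{k})$ and meets every odd level, so the same argument with the single level $2i-1$ gives $M\le p\,n_{2i-1}$ for all $i$, hence $M\le\mu(P)$ by formula~(\ref{mu od}).

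The lower bound, namely the construction of $\mu(P)$ pairwise disjoint maximum antichains, is the main obstacle, and is precisely what the technical Lemmas~\ref{strong theta<eta} and~\ref{strong eta<theta} are meant to supply. In the odd case the construction is clean and I would do it by hand: for each shift $x\in\{1,\dots,p\}$ take the constant height-vectors $\mathbf{k}=(c,\dots,c)$ for $c=1,\dots,\min_i n_{2i-1}$; antichains with distinct $x$ occupy distinct columns, while those with the same $x$ differ in every height, so one obtains $p\cdot\min_i n_{2i-1}=\mu(P)$ disjoint maximum antichains. The even case is the hard part because of the example contrasting $\mathcal{A}$ and $\mathcal{B}$: a maximal family need not be maximum, and one must select antichains \emph{uniformly} so that the multiplicities $m_{\mathcal{A}}(x,p_i)$ left over at each level remain in a controlled block pattern $((m+1)^d,m^{d'},(m+1)^{d''})$ or its complement. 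I would therefore process the poset through its levels, at each stage isolating one odd and one even relevant level — a two-level configuration $((p+g+2)^N,p^M)$ with $g\ge1$, the inequality $g\ge1$ holding exactly because distinct odd/even levels of a spread differ by at least $3$ — and applying Lemma~\ref{strong theta<eta} or~\ref{strong eta<theta}. Each application matches the accumulated uniform ``front'' against the next level, peels off $\min\{p\eta+\delta,(p+g)\theta\}$ disjoint antichains, and leaves a remainder again concentrated on a single level in block form, ready to feed the next step; carrying the counts down to the bottleneck pair $(i,j)$ shows the total produced is $\mu(P)$.

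Finally I would combine the two inequalities to conclude $M=\mu(P)$, i.e. $\min(\lambda(P))=\mu(P)$. The delicate points on which I expect to spend the most effort are: verifying that the block-pattern invariant is genuinely preserved across every application of the matching lemmas (this is the content of parts (ii) of Lemmas~\ref{strong theta<eta} and~\ref{strong eta<theta}); confirming that the gap parameter $g$ fed into the lemmas is always $\ge1$, so that adjacent same-parity levels (gap $2$) are absorbed inside a single $O^t$ or $E^t$ run rather than treated by a two-level lemma; and checking that the assembled families realize admissible maximum antichains of Lemma~\ref{little antichain lemma}, in particular with jump parameter $\epsilon\in\{0,1\}$ as required by Lemma~\ref{antichain condition}.
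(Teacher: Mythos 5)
Your proposal is correct, and it is half the paper's proof and half a genuinely different argument. The reduction via Greene's theorem (Theorem~\ref{Greene}) to counting pairwise disjoint maximum antichains, and the entire lower bound, follow the paper's route: the odd case via the constant-height family $O^{r}(x,(c,\dots,c))$, and the even case via exactly the iterative scheme in the paper's proof --- an initial uniform family, then repeated application of Lemmas~\ref{strong theta<eta} and~\ref{strong eta<theta} to a reduced two-level configuration, preserving the block pattern of leftover multiplicities, until some odd level $2\mathfrak{b}-1$ and even level $2\mathfrak{t}$ with $\mathfrak{b}\le\mathfrak{t}$ are exhausted by antichains none of which meets both levels, which forces at least $p\,n_{2\mathfrak{b}-1}+(p+1)\,n_{2\mathfrak{t}}\ge\mu(P)$ disjoint maximum antichains. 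Where you genuinely differ is the upper bound: the paper never bounds the number of disjoint maximum antichains directly, but instead deduces $\min(\lambda(P))\le\mu(P)$ from the dominance $\lambda_U(P)\le\lambda(P)$, the equality of the numbers of parts (Proposition~\ref{number of parts}), and Theorem~\ref{Oblak min}. Your marked-vertex bottleneck argument proves the same inequality entirely inside the antichain combinatorics, using only the classification in Lemma~\ref{little antichain lemma}: for a minimizing pair $i\le j$ the marked set has exactly $p\,n_{2i-1}+(p+1)\,n_{2j}=\mu(P)$ elements, and every maximum antichain $O^t(x,\mathbf{k})\cup E^t(x+\epsilon,\mathbf{k}')$ meets it, through $O^t$ at level $2i-1$ when $t\ge i$ and through $E^t$ at level $2j$ when $t<i\le j$. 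This is a valid alternative, and what it buys is independence of this inequality from the $U$-chain theory of Section~2.

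One imprecision in your lower bound is worth correcting, though it does not break the argument. The reason the matching lemmas apply with $g\ge1$ is not that the parts of the paired odd and even levels differ by at least $3$ (they do, since the odd index is $\le\mathfrak{t}$ and the even index is $\ge\mathfrak{t}+1$, but that is not the operative fact). If you set $g$ equal to that part difference minus $2$, the lemmas would output pairs with offsets anywhere in $\{0,\dots,g\}$, and offsets larger than $1$ do not extend to maximum antichains. The correct reduction, as in the paper, is made in normalized window coordinates, where Lemma~\ref{little antichain lemma} restricts the admissible offset to $\epsilon\in\{0,1\}$; for a spread the effective two-level configuration is therefore always $((p+3)^{\mathfrak{e}},p^{\mathfrak{o}})$, i.e.\ $g=1$ exactly, and larger values of $g$ only arise later, in Theorem~\ref{spread min part}, when distinct spreads are matched. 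Since your closing sentence flags precisely this $\epsilon\in\{0,1\}$ verification as a point to check, this is a wording slip rather than a gap.
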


\begin{proof}

Since $\lambda(P) \geq \lambda_U(P)$, and $\lambda(P)$ and $\lambda_U(P)$ have the same number of parts (by Proposition~\ref{number of parts}), we get $\min(\lambda(P))\leq \min(\lambda_U(P))$. So by Theorem~\ref{Oblak min}, \begin{equation}\min(\lambda(P)) \leq \mu(P).\end{equation} 

Using Theorem~\ref{Greene}, we prove the reverse inequality by finding $\mu(P)$ disjoint maximum antichains in $\mathcal{D}_P$. 

Assume that $P$ is an $s$-spread and write $P=(p_s^{n_s}, \cdots, p_1^{n_1})$, with $p_i=p_1+i-1$ and $n_i>0$, for $1 \leq i \leq s$.  Let $r=r(P).$
\bigskip

\noindent {\bf Case 1.} $s$ is odd.

Then $r=\displaystyle{\frac{s+1}{2}}$ and $\mu(P)=p_1 \cdot \min \{n_{2i-1}\, | \, i=1, \cdots, r\}.$ 

For a positive integer $j$, satisfying $1 \leq j \leq \min \{n_{2i-1}\, | \, i=1, \cdots, r\},$ let ${\bf j}=(j, \cdots, j)$. Recall from Definition~\ref{O and E} that for $1 \leq x \leq p_1$, $$O^{r}(x, {\bf j}) = \, \{(x+i-1,p_{2i-1},j)\, | \, 1\leq i \leq r\, \}.$$

Note that $O^{r}(x, {\bf j}) \cap O^{r}(y, {\bf k}) =\emptyset \mbox{  if } x \neq y \mbox{ or } j \neq k.$ 

Varying $x$ and $j$, we get $\mu(P)$ disjoint maximum antichains (each of length $r$) in $\mathcal{D}_P$, as desired.

\noindent {\bf Case 2.} $s$ is even.

Then $r=\displaystyle{\frac{s}{2}}$ and $\mu(P)=\min \{p_1 n_{2i-1}+(p_1+1) n_{2j} \, | \, 1 \leq i \leq j \leq r \}.$  Let $p=p_1$. First note that by Lemma~\ref{little antichain lemma}, a vertex $(x,p_{2i-1},k)$ can belong to a maximum antichain only if $i\leq x\leq p+i-1$, and a vertex $(y,p_{2j},k)$ can belong to a maximum antichain only if $j\leq y \leq p+j$. So we are going to restrict our study to the subposet $\mathcal{D}$ of $\mathcal{D}_P$ that only includes such vertices. So  $\mathcal{D}=\left \{(x,p_i,k)\in \mathcal{D}_P\,|\, \lceil \frac{i}{2} \rceil \leq x \leq p+\lfloor \frac{i}{2}\rfloor\right \}.$

To complete the proof, we give an algorithm to construct a set $\mathcal{A}$ of disjoint maximum antichains satisfying the following two properties for some integers $1\leq \frak{b} \leq \frak{t} \leq r$. 

\begin{itemize}
\item[({\bf i)}] If $\bar{\mathcal{A}}=\mathcal{D} \setminus \mathcal{A}$ then $m_{\bar{\mathcal{A}}}(x,p_{2\frak{b}-1})=0$ for all $x\in\{\frak{b},\cdots, p+\frak{b}-1\}$, and  $m_{\bar{\mathcal{A}}}(y,p_{2\frak{t}})=0$ for all $y\in\{\frak{t}, \cdots,  p+\frak{t}\}$, {\it i.e.} every element of $\mathcal{D}$ in level $2\frak{b}-1$ or $2\frak{t}$ belongs to an antichain in $\mathcal{A}$; and 
\item[({\bf ii)}] For all $A\in \mathcal{A}$ if $m_A(x,p_{2\frak{b}-1})>0$ for some $x\in \{\frak{b},\cdots, p+\frak{b}-1\}$ then $m_A(y,p_{2\frak{t}})=0$ for all  $y\in \{\frak{t},\cdots, p+\frak{t}\}$, {\it i.e.} no antichain in $\mathcal{A}$ contains vertices from both levels $2\frak{b}-1$ and $2\frak{t}$.
\end{itemize}

Set 
 $$\begin{array}{lll}
\frak{o}&:=&\min\{n_{2i-1}\,|\, 1\leq i \leq r\},\\
\frak{e}&:=&\min\{n_{2j}\,|\, 1\leq j \leq r\}, \mbox{ and}\\
 \mathcal{A}&:=&\{O^{r}(x, {\bf k})\,|\, 1 \leq x \leq p, 1 \leq k \leq \frak{o} \mbox{ and } {\bf k}=(k, \cdots, k)\}\\ &\cup& \{E^{0}(y, {\bf k})\,|\, 1 \leq y \leq p+1, 1 \leq k \leq \frak{e} \mbox{ and } {\bf k}=(k, \cdots, k)\}.\\
 \end{array}$$

Then $\mathcal{A}$ satisfies the two conditions (i) and (ii) above if and only if there exists $\frak{b}$ and  $\frak{t}$ such that $\frak{b}\leq \frak{t}$, $n_{2\frak{b}-1}=\frak{o}$ and $n_{2\frak{t}}=\frak{e}$. So the proof is complete if this is the case. Otherwise, set $\mathcal{D}:=\mathcal{D}\setminus\mathcal{A}$ and $$\frak{t}:=\max\{j\,|\, 1\leq j \leq r, \, \, m_{\mathcal{D}}(y,p_{2j})=0, \mbox{ for all } y\}.$$

In other words, we choose $\frak{t}$ so that the level $2\frak{t}$ is the highest even level which is empty in $\mathcal{D}$. By assumption $m_{\mathcal{D}_1}(x,p_{2i-1})>0$ for all $i\in \{1, \cdots, \frak{t}\}$. By the choice of $\frak{t}$ we have $m_{\mathcal{D}}(y,p_{2j})>0$ for all $j\in \{\frak{t}+1, \cdots, r\}$. Note that, by the definition of $\mathcal{A}$,  for each $\ell \in\{1, \cdots, s\}$, $m_{\mathcal{D}}(x,p_\ell)$ is the same for all $x$. We are now going to extend $\mathcal{A}$ by adding antichains of the form $O^{\frak{t}}(x,{\bf k})\cup E^{\frak{t}}(y,{\bf k'})$.
Set 
$$\begin{array}{l}
\frak{o}:=\min\{m_{\mathcal{D}}(x,p_{2i-1})\,|\, 1\leq i \leq \frak{t}\}, \mbox{ and }\\
\frak{e}:=\min\{m_{\mathcal{D}}(y,p_{2j})\,|\, \frak{t}+1\leq j \leq r\}.
\end{array}$$ 

By Lemma~\ref{antichain condition}, the problem of finding antichains of form $O^{\frak{t}}(x,{\bf k})\cup E^{\frak{t}}(y,{\bf k'})$ in $\mathcal{D}$ can be reduced to the problem of finding antichains of length 2 in $\mathcal{D}_{Q_1}$ for $Q_1=((p+3)^{\frak{e}},p^{\frak{o}})$.

{\bf Case 2-1.} $p\frak{o} \leq (p+1)\frak{e}$. 

Then we use Lemma~\ref{strong eta<theta} to find $p\frak{o}$ disjoint antichains of the form $O^{\frak{t}}(x,{\bf k})\cup E^{\frak{t}}(y,{\bf k'})$ in $\mathcal{D}$ and extend $\mathcal{A}$ by adding these antichains. Let $\frak{b}\in \{1, \cdots, \frak{t}\}$ be such that $m_{\mathcal{D}}(x,p_{2\frak{b}-1})=\frak{o}$. Then $\mathcal{A}$ satisfies Conditions (i) and (ii) above the proof is complete in this case.

{\bf Case 2-2.} $p\frak{o}>(p+1)\frak{e}$. 

Then we use Lemma~\ref{strong theta<eta}, to find $(p+1)\frak{e}$ disjoint antichains of the form $O^{\frak{t}}(x,{\bf k})\cup E^{\frak{t}}(y,{\bf k'})$ in $\mathcal{D}$ and extend $\mathcal{A}$ by adding these antichains. Now we set $\mathcal{D}=\mathcal{D}\setminus\mathcal{A}.$

By Lemma~\ref{strong theta<eta}, for each $i\in \{1, \cdots, \frak{t}\}$, there exist integers $\eta(i)\geq 0$  and such that $0\leq \delta < p$ such that $m_{\mathcal{D}_2}(x,p_{2i-1})=\left\{\begin{array}{lll} \eta(i)+1&&\mbox{ if } i\leq x\leq i+\delta-1\\ \eta(i)&&\mbox{ if } i+\delta \leq x\leq i+p-1.\end{array}\right.$. Also for all $j\in\{\frak{t}+1, \cdots, r\}$, each $m_{\mathcal{D}}(y,p_{2j})$ decreases by $\frak{e}$. 

Now we replace $t$ by a strictly larger value as follows.
$$\begin{array}{l}
\frak{t}:=\max\{j\,|\, 1\leq j \leq r, \, m_{\mathcal{D}}(y,p_{2j})=0, \mbox{ for all } y\},\\
\frak{o}:=\min\{\eta(i)\,|\, 1\leq i \leq \frak{t}\}, \mbox{ and }\\
\frak{e}:=\min\{m_{\mathcal{D}}(y,p_{2j})\,|\, \frak{t}+1\leq j \leq r\}.\\
\end{array}$$

{\bf Case 2-2-1.} $p\frak{o}+\delta \leq (p+1)\frak{e}$. 

Then we use Lemma~\ref{strong eta<theta} to extend $\mathcal{A}$ by adding $(p\frak{o}+\delta)$ disjoint antichains of type $O^{\frak{t}}(x,{\bf k})\cup E^{\frak{t}}(y,{\bf k'})$ in $\mathcal{D}$. Let $\frak{b}$ is an integer in $\{1, \cdots, \frak{t}\}$ such that $\eta(i)=\frak{o}$. Then $\mathcal{A}$ satisfies both conditions (i) and (ii) and the proof is complete in this case.
\medskip

{\bf Case 2-2-2.} $p\frak{o}+\delta > (p+1)\frak{e}$. 

Then we use Lemma~\ref{strong eta<theta} to extend $\mathcal{A}$ by adding $(p+1)\frak{e}$ disjoint antichains of type $O^{\frak{t}}(x,{\bf k})\cup E^{\frak{t}}(y,{\bf k'})$ in $\mathcal{D}$. We then set $\mathcal{D}:=\mathcal{D}\setminus\mathcal{A}$ and repeat as in Case 2-2. Since the value of $\frak{t}$ is strictly increasing by repeating the process, it will eventually terminate by producing the desired set $\mathcal{A}$.

\end{proof}

We summarize the proof of Proposition~\ref{spread min part prop} in the following algorithm.

\begin{algorithm} 
Let $P=(p_s^{n_s}, \cdots, p_1^{n_1})$ be a spread with $p_i=p_1+i-1$ and $n_i>0$ for $1\leq i \leq s$. Set $$\begin{array}{ll}
 r=&r(P)=\lceil \frac{s}{2}\rceil;  \, \, p=p_1;\\
 \mathcal{D}=&\{(x,p_i,k)\,|\, 1\leq i \leq s, 1 \leq k \leq n_i, \lceil \frac{i}{2} \rceil \leq x \leq p+\lfloor \frac{i}{2}\rfloor\};\\
\frak{o}=&\min\{n_{2i-1}\,|\, 1\leq i \leq r\}; \, \, \frak{e}_0=\min\{n_{2j}\,|\, 1\leq j \leq r\}; \mbox{ and}\\
 \mathcal{A}=&\{O^{r}(x, {\bf k})\,|\, 1 \leq x \leq p, 1 \leq k \leq \frak{o} \mbox{ and } {\bf k}=(k, \cdots, k)\} \cup \\  &\{E^{0}(y, {\bf k})\,|\, 1 \leq y \leq p+1, 1 \leq k \leq \frak{e}_0 \mbox{ and } {\bf k}=(k, \cdots, k)\}.\\
 \end{array}$$

\noindent{\bf Input:} An $s$-spread $P=(p_s^{n_s}, \cdots, p_1^{n_1})$.

\noindent{\bf Output:} A set $\mathcal{A}$ consisting of $\mu(P)$ disjoint maximum antichains in $\mathcal{D}_P$.
$$\begin{array}{ll} 
\bullet& \mathcal{D}:=\mathcal{D}\setminus\mathcal{A}.\\
\bullet& \frak{t}:=\max\{j\,|\, 1\leq j \leq r \mbox{ and }m_{\mathcal{D}}(y,p_{2j})=0 \mbox{ for all } y\}.\\
\bullet &\mbox{For } 1\leq i \leq \frak{t}, \mbox{ let } \eta(i)\geq 0 \mbox{ and } 0\leq \delta < p \mbox{ be such that }\\
& m_{\mathcal{D}}(x,p_{2i-1})=\left\{\begin{array}{lll} \eta(i)+1&&\mbox{ if } i\leq x\leq i+\delta-1,\\ \eta(i)&&\mbox{ if } i+\delta \leq x\leq i+p-1.\end{array}\right.\\
\bullet &\frak{o}:=\min\{\eta(i)\,|\, 1\leq i \leq \frak{t}\}.\\
\bullet & \frak{e}:=\min\{m_{\mathcal{D}}(y,p_{2j})\,|\, \frak{t}+1\leq j \leq r\}.\\
&\begin{array}{ll}
\bullet & \mbox{ If } p\frak{o}+\delta=0 \mbox{ then {\bf End.}}\\
\bullet & \mbox{ If } 0<p\frak{o}+\delta \leq (p+1)\frak{e} \mbox{ then}\\
	   &\mathcal{A}:=\mathcal{A}\cup\{\mbox{all } O^{\frak{t}}(x,{\bf k})\cup E^{\frak{t}}(y,{\bf k'}) \mbox{ obtained by Lemma~\ref{strong eta<theta}}.\}. \mbox{ {\bf End.}}\\
\bullet & \mbox{ If } p\frak{o}+\delta > (p+1)\frak{e}  \mbox{ then {\bf repeat} with}\\
	   &\mathcal{A}:=\mathcal{A}\cup\{\mbox{all } O^{\frak{t}}(x,{\bf k})\cup E^{\frak{t}}(y,{\bf k'}) \mbox{ obtained by Lemma~\ref{strong theta<eta}}.\}.
\end{array}	  
\end{array}$$
\end{algorithm}

\begin{remark}

Note that there are alternative algorithms to the one discussed in the proof above. In the proof we used Lemmas~\ref{strong eta<theta} and~\ref{strong theta<eta} with $d=0$, for simplicity.

We can also follow an alternative algorithm taking antichains of type $\frak{t}-1$, where $\frak{t}$ is the lowest odd level that becomes empty after each step. 
\end{remark}

The following corollary summarizes some of the nice properties of the set of antichains constructed in the proof of Proposition~\ref{spread min part prop}.

Recall from Equation (\ref{mu  spread equi}) in Definition~\ref{mu defn}, for a spread $P$ with smallest part $p$, $\mu(P)=\min\{pn_{2i-1}+(p+1)n_{2j}\,|\, 1\leq i \leq j \leq r(P)\}.$  
\begin{corollary}\label{chiz}
Let $P$ be a spread with smallest part $p$ and biggest part $q$ and $r=r(P)$. Suppose that $\mu(P)=pm_1+(p+1)m_2$. Then we can find a set $\mathcal{A}$ of $\mu(P)$ disjoint maximum antichains such that  
 
\begin{itemize}
\item[(a)] There exist non-negative integers $a$ and $\delta$ such that $\delta<p$, and $$m_{\mathcal{A}}(x,p)=\left\{\begin{array}{lll}a&&\mbox{ if } 1\leq x \leq \delta,\\a+1&&\mbox{ if } \delta+1\leq x \leq p.\end{array}\right.$$
\item[(b)] There exists a non-negative integer $b$ such that $m_{\mathcal{A}}(y,p+1)=b$, for $1\leq y\in\leq p+1$.
\item[(c)] There exists a non-negative integer $c$ such that $m_{\mathcal{A}}(x,q-1)=c$, for all $\lceil\frac{q-p}{2}\rceil \leq x\leq r+p-1$.
\item[(d)] There exist non-negative integers $d$ and $\epsilon$ such that $\epsilon \leq p$, and $$m_{\mathcal{A}}(y,q)=\left\{\begin{array}{lll}d+1&&\mbox{ if } r\leq y < r+\epsilon,\\d&&\mbox{ if } r+\epsilon \leq y \leq \lceil\frac{q-p}{2}\rceil+p.\end{array}\right.$$

\end{itemize}

\end{corollary}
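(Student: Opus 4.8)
The plan is to read the four assertions off from the set $\mathcal{A}$ of disjoint maximum antichains produced by the algorithm in the proof of Proposition~\ref{spread min part prop}, recording the function $m_{\mathcal{A}}$ only on the extreme levels $p=p_1$, $p+1=p_2$, $q-1=p_{s-1}$ and $q=p_s$. When $s$ is odd the algorithm adds only antichains $O^{r}(x,{\bf k})$; then levels $p_2$ and $p_{s-1}$ are never met, so $b=c=0$, while $m_{\mathcal{A}}(\cdot,p)$ and $m_{\mathcal{A}}(\cdot,q)$ are constant, and all four conclusions hold with degenerate parameters. So I would assume $s=2r$ from now on.

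For even $s$ the heart of parts (b) and (c) is an incidence count. By Definition~\ref{O and E}, an antichain $O^{t}(x,{\bf k})\cup E^{t}(y,{\bf k'})$ meets level $p_2=p+1$ only if $t=0$ and meets level $p_{2r-1}=q-1$ only if $t=r$. In the algorithm the only antichains with $t=0$ are the initial $E^{0}(y,{\bf k})$ (with $1\le y\le p+1$, $1\le k\le\frak{e}_0$) and the only ones with $t=r$ are the initial $O^{r}(x,{\bf k})$ (with $1\le x\le p$, $1\le k\le\frak{o}$). Hence level $p+1$ is filled uniformly, $m_{\mathcal{A}}(y,p+1)=\frak{e}_0$, and level $q-1$ is filled uniformly on the columns $x+r-1$, $1\le x\le p$, i.e.\ on $r\le x\le p+r-1$ (and $\lceil\frac{q-p}{2}\rceil=r$ as $q-p=2r-1$), with $m_{\mathcal{A}}(x,q-1)=\frak{o}$. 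This is (b) and (c) with $b=\frak{e}_0$, $c=\frak{o}$.

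Parts (a) and (d) concern $p_1=p$ and $p_{2r}=q$, the two levels met by every mixed antichain, and here I would induct over the stages of the algorithm. Each stage chooses antichains $O^{\frak{t}}(x,{\bf k})\cup E^{\frak{t}}(y,{\bf k'})$ whose admissibility depends, by Lemma~\ref{antichain condition}(b), only on the junction pair at levels $p_{2\frak{t}-1}$ and $p_{2\frak{t}+2}$; as these differ by $3$, the selection is exactly the length-$2$ antichain problem on $Q_1=((p+3)^{\frak{e}},p^{\frak{o}})$ solved by Lemmas~\ref{strong theta<eta} and~\ref{strong eta<theta} with $d=0$. Two facts make the extreme levels tractable. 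First, the column of $O^{\frak{t}}(x,\cdot)$ at level $p_1$ equals its $Q_1$-column (offset $0$) and the column of $E^{\frak{t}}(y,\cdot)$ at level $p_{2r}$ equals its $Q_1$-column plus $r-2$; comparing the antichain condition in $\mathcal{D}_P$ with that in $Q_1$ shows both offsets are stage-independent, so the profiles at $p$ and $q$ are fixed column-translates of the profiles the lemmas control and accumulate in one coordinate system. Second, the output tables of those lemmas with $d=0$ exhibit $m_{\bar{\mathcal{A}}}$ at each controlled level as a single-step staircase anchored at a fixed end of the row; passing to the complement $m_{\mathcal{A}}=n_1-m_{\bar{\mathcal{A}}}$ (respectively $n_s-m_{\bar{\mathcal{A}}}$) flips the orientation to the ascending profile $a^{\delta}(a+1)^{p-\delta}$ of (a) and the descending profile $(d+1)^{\epsilon}d^{\,p+1-\epsilon}$ of (d).

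The step I expect to be the main obstacle is showing that these per-stage increments accumulate to a \emph{single} step rather than to a profile with several plateaus: since level $p_1$ is decremented at every mixed stage (and the junction level that $Q_1$ controls moves upward as $\frak{t}$ grows), one must verify that the left-anchored ($d=0$) increments nest, so that their running sum stays two-valued. Establishing this nesting --- equivalently, that the successive block-widths produced by the tables in Lemmas~\ref{strong theta<eta} and~\ref{strong eta<theta} are monotone in the appropriate direction --- together with the constant-offset fact above, is the crux; once it is in place, reading $a,\delta,d,\epsilon$ off the final tables finishes the proof.
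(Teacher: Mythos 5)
Your overall route is the paper's own: Corollary~\ref{chiz} is stated there with no separate argument, as a record of properties of the set $\mathcal{A}$ built in the proof of Proposition~\ref{spread min part prop}, so ``read it off the algorithm'' is exactly the intended proof. Your dispatch of the odd-$s$ case and of parts (b) and (c) is correct: a mixed antichain $O^{\frak{t}}(x,{\bf k})\cup E^{\frak{t}}(y,{\bf k'})$ with $1\leq \frak{t}\leq r-1$ meets neither level $p+1$ nor level $q-1$, so only the initial antichains $E^{0}(y,{\bf k})$ and $O^{r}(x,{\bf k})$ touch those two levels, which are therefore filled uniformly (with $b=\frak{e}_0$ and $c$ equal to the initial $\frak{o}$).

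The gap is in parts (a) and (d), which carry the actual content, and it sits exactly at the step you defer (``once it is in place \dots finishes the proof''): the claim that the per-stage decrements at levels $p$ and $q$ nest, so the running profile stays two-valued with a single step. This is not a routine monotonicity check; for the algorithm as literally written (Lemmas~\ref{strong theta<eta} and~\ref{strong eta<theta} applied with $d=0$, every stage anchored at the left end of the row) it is false. The failure occurs precisely when the minimizing odd level changes between stages: the newly absorbed levels have a uniform remaining profile, so the next stage is fed a uniform input, and its decrement then has its step at column $p-b$ (where $g\theta=pa+b$), a position unrelated to the step created earlier; subtracting it from the one-step profile at level $p$ leaves three distinct values. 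Concretely, for $P=(8^4,7,6^2,5^{10},4,3^{20})$ (so $p=3$, $r=3$, $\mu(P)=19$) the $d=0$ algorithm runs, after the initial uniform layer, through two Case 2-2 stages with level-$p$ decrements $(1,1,2)$ and then $(2,3,3)$, ending with $m_{\mathcal{A}}(x,3)=(4,5,6)$ --- three values, contradicting (a), even though $|\mathcal{A}|=19=\mu(P)$. The corollary itself survives: in this example the second decrement can be rechosen as $(3,3,2)$, restoring a uniform profile. But that is the point --- one must use the lemmas' free anchor parameter $d$ (and their left--right mirrors, the symmetry behind Corollary~\ref{chiz prime}) adaptively, aligning each stage's step with the accumulated one, and do so simultaneously at the two coupled levels $p$ and $q$. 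That alignment argument is the crux of (a) and (d); your proposal names it but does not supply it, so these two parts remain unproven.
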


By symmetry of $\mathcal{D}_P$, we also have the following corollary.

\begin{corollary}\label{chiz prime}
Let $P$ be a spread with smallest part $p$ and biggest part $q$ and $r=r(P)$. Suppose that $\mu(P)=pm_1+(p+1)m_2$. Then we can find a set $\mathcal{A}$ of $\mu(P)$ disjoint maximum antichains such that  

\begin{itemize}
\item[(a)] There exist non-negative integers $a$ and $\delta$ such that $\delta<p$, and $$m_{\mathcal{A}}(x,p)=\left\{\begin{array}{lll}a+1&&\mbox{ if } 1\leq x \leq \delta,\\ a&&\mbox{ if } \delta+1\leq x \leq p.\end{array}\right.$$
\item[(b)] There exists a non-negative integer $b$ such that $m_{\mathcal{A}}(y,p+1)=b$, for $1\leq y\in\leq p+1$.
\item[(c)] There exists a non-negative integer $c$ such that $m_{\mathcal{A}}(x,q-1)=c$, for all $\lceil\frac{q-p}{2}\rceil \leq x\leq r+p-1$.
\item[(d)] There exist non-negative integers $d$ and $\epsilon$ such that $\epsilon \leq p$, and $$m_{\mathcal{A}}(y,q)=\left\{\begin{array}{lll}d&&\mbox{ if } r\leq y < r+\epsilon,\\d+1&&\mbox{ if } r+\epsilon \leq y \leq \lceil\frac{q-p}{2}\rceil+p.\end{array}\right.$$

\end{itemize}
\end{corollary}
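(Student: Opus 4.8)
The plan is to deduce Corollary~\ref{chiz prime} from Corollary~\ref{chiz} by exploiting the left--right reflection symmetry of the poset $\mathcal{D}_P$. Concretely, I would introduce the map $\sigma\colon \mathcal{D}_P\to\mathcal{D}_P$ defined on vertices by $\sigma(x,p,k)=(p+1-x,\,p,\,k)$, which reflects each level about its vertical axis while leaving the row index $k$ untouched. Since $1\le x\le p$, the assignment $x\mapsto p+1-x$ permutes the columns $\{1,\dots,p\}$ of each level and fixes the rows $\{1,\dots,n_p\}$, so $\sigma$ is a well-defined involution of the underlying vertex set.

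The first key step is to check that $\sigma$ carries antichains to antichains. Because same-level vertices are always comparable (Remark~\ref{compare}), any antichain contains at most one vertex per level, and this property is preserved by $\sigma$. For a pair at distinct levels $p<q$, Lemma~\ref{antichain condition}(a) says $\{(x,p,k),(y,q,\ell)\}$ is an antichain exactly when $x<y<q-p+x$. Applying Lemma~\ref{antichain condition}(a) to the reflected pair $\{(p+1-x,p,k),(q+1-y,q,\ell)\}$, antichain-ness becomes $p+1-x<q+1-y<q+1-x$; the left inequality is equivalent to $y<q-p+x$ and the right one to $x<y$, so the reflected condition is equivalent to the original one. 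Hence $\sigma$ maps antichains to antichains, and being a bijection it maps maximum antichains to maximum antichains and preserves disjointness; in particular $\sigma$ sends a family of $\mu(P)$ disjoint maximum antichains to another such family.

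With this in hand, I would take the family $\mathcal{A}$ produced by Corollary~\ref{chiz} and set $\mathcal{A}'=\sigma(\mathcal{A})=\{\sigma(A):A\in\mathcal{A}\}$. Since $\sigma$ is an involution fixing rows, for every level $p$ one has
\[
m_{\mathcal{A}'}(x,p)=\bigl|\{k:(p+1-x,p,k)\in A\text{ for some }A\in\mathcal{A}\}\bigr|=m_{\mathcal{A}}(p+1-x,\,p).
\]
It then remains to substitute the four profiles (a)--(d) of Corollary~\ref{chiz} into $x\mapsto p+1-x$ (resp.\ $y\mapsto q+1-y$ at the top level $q$). The constant profiles (b) and (c) are unchanged, while each of the two-valued profiles (a) and (d) has its two values interchanged, with the break point $\delta$ replaced by $p-\delta$ and the break point $\epsilon$ replaced accordingly; this is exactly the form asserted in Corollary~\ref{chiz prime}(a)--(d).

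The routine but slightly delicate part will be the endpoint bookkeeping: at the top level $q$ the admissible columns range over $\lceil\frac{q-p}{2}\rceil\le y\le r+p-1$ (for level $q-1$) and $r\le y\le \lceil\frac{q-p}{2}\rceil+p$ (for level $q$) rather than over a symmetric interval about $1$, so I would verify, using $q-p=s-1$ and $r=\lceil s/2\rceil$, that these intervals are invariant under $y\mapsto q+1-y$ and track how $\epsilon$ transforms. I would also dispose of the degenerate cases $\delta=0$ or $\epsilon=0$, where one column range is empty and the corresponding profile is constant, by reinterpreting the constants in the allowed form. These are mechanical checks once the symmetry $\sigma$ and the multiplicity identity above are established, which is where the essential content lies.
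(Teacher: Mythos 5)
Your proposal is correct and is essentially the paper's own argument: the paper deduces Corollary~\ref{chiz prime} from Corollary~\ref{chiz} with the single phrase ``by symmetry of $\mathcal{D}_P$,'' and your explicit reflection $\sigma(x,p,k)=(p+1-x,p,k)$, the check via Lemma~\ref{antichain condition}(a) that it preserves incomparability (hence sends disjoint maximum antichains to disjoint maximum antichains), and the resulting identity $m_{\sigma(\mathcal{A})}(x,\ell)=m_{\mathcal{A}}(\ell+1-x,\ell)$ are exactly the details that phrase suppresses. The interval-invariance checks at levels $q-1$ and $q$ (using $q-p=s-1$, $r=\lceil s/2\rceil$) and the degenerate $\delta=0$ or $\epsilon=0$ relabelings that you flag do go through, so nothing essential is missing.
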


We now generalize Proposition~\ref{spread min part prop} to an arbitrary partition $P$.

\begin{theorem} \label{spread min part}
For any partition $P$ of $n$,  $\min(\lambda(P))=\mu(P).$
\end{theorem}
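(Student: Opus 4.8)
The plan is to prove the two inequalities $\min(\lambda(P)) \le \mu(P)$ and $\min(\lambda(P)) \ge \mu(P)$ separately. The first is immediate and non-constructive: by Proposition~\ref{number of parts} the partitions $\lambda(P)$ and $\lambda_U(P)$ both have $r(P)$ parts, so their parts sum to the same $n$; since $\lambda_U(P) \le \lambda(P)$ in the dominance order, comparing the sums of the first $r(P)-1$ parts forces $\lambda_{U,r(P)}(P) \ge \lambda_{r(P)}(P)$, that is $\min(\lambda(P)) \le \min(\lambda_U(P))$, and the latter equals $\mu(P)$ by Theorem~\ref{Oblak min}. All the work is therefore in the reverse inequality, which by Theorem~\ref{conjugate} (Greene) is equivalent to exhibiting $\mu(P)$ pairwise disjoint maximum antichains in $\mathcal{D}_P$.

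To build them I would induct on the number $\ell$ of spreads in a decomposition $P = P_\ell \cup \cdots \cup P_1$ with each gap at least two, the base case $\ell = 1$ being exactly Proposition~\ref{spread min part prop}. Since $\mu$ obeys the recursion $\mu(P_\ell \cup \cdots \cup P_1) = \min\{\mu(P_1), \mu((P_\ell \cup \cdots \cup P_2) - 2r(P_1))\}$ of Lemma~\ref{mu union}---the same recursion that Proposition~\ref{U min combine} establishes for $\min(\lambda_U)$---it suffices to prove the analogous gluing statement for antichains: if $R = (Q,P)$ with the smallest part of $Q$ at least two more than the largest part of $P$, then $\mathcal{D}_R$ carries $\min\{\mu(P), \mu(Q-2r(P))\}$ disjoint maximum antichains. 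By Proposition~\ref{number of parts} every maximum antichain of $\mathcal{D}_R$ meets $\mathcal{D}_P$ and $\mathcal{D}_{\bar Q}$ (with $\bar Q = Q - 2r(P)$, identified with the top of $\mathcal{D}_R$ by the relabeling $\iota(u,p,k) = (u+r(P), p+2r(P),k)$ of Proposition~\ref{U min combine}) in a maximum antichain of each, so a maximum antichain of $R$ is precisely a compatible pairing of one maximum antichain from each block.

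The construction of the pairing is where the two boundary corollaries enter. Writing $N = \min\{\mu(P), \mu(\bar Q)\}$, I would apply Corollary~\ref{chiz prime} to the lower block $P$ to select $N$ of its disjoint maximum antichains whose endpoints on the top level $q_P$ of $P$ realize the right-justified, nearly uniform multiplicity profile of part~(d), and apply Corollary~\ref{chiz} to $\bar Q$ to select $N$ disjoint maximum antichains whose endpoints on the bottom level $p_{\bar Q}$ realize the left-justified profile of part~(a). Sorting each family by the first coordinate of its relevant endpoint and pairing the two sorted lists index by index, I would check with Lemma~\ref{antichain condition}(b) that each paired union is an antichain of $\mathcal{D}_R$: this reduces to the single window inequality $x < y < (q'-p') + x$ between the top endpoint $(x, q_P, \cdot)$ of the $P$-antichain and the bottom endpoint $(y, p_{\bar Q}, \cdot)$ of the $\bar Q$-antichain, where $q' - p' = p_{\bar Q} - q_P \ge 2$ is the gap. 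The shift by $r(P)$ in the first coordinate built into $\iota$ translates the two nearly uniform profiles so that the sorted bottom endpoints of $\bar Q$ lie strictly to the right of their matched top endpoints of $P$ yet inside the allowed window; hence all $N$ pairings succeed simultaneously, and the resulting antichains are disjoint because the two families are internally disjoint and occupy disjoint level ranges.

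The main obstacle I anticipate is exactly this last simultaneous matching: the window inequality must hold for all $N$ indices at once, and a single badly placed endpoint would break a pairing. This is why the two corollaries are needed in their dual (left- versus right-justified) forms rather than just one of them, and why the explicit control of $m_{\mathcal{A}}$ on levels $p, p+1$ and $q-1, q$---not merely the count $N$ of antichains---is essential: verifying that the sorted endpoint sequences interleave correctly and that the $r(P)$-shift places them inside the gap window is the technical heart of the gluing, and the step I expect to occupy most of the write-up.
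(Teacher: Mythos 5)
Your overall architecture --- the easy inequality via dominance plus equal part-counts, then induction on the spread decomposition with a gluing step across each gap, using Corollaries~\ref{chiz} and~\ref{chiz prime} to control boundary profiles --- is the same as the paper's. The genuine gap is inside the gluing step. You reduce each pairing to ``the single window inequality $x < y < (q'-p')+x$ between the top endpoint $(x,q_P,\cdot)$ of the $P$-antichain and the bottom endpoint $(y,p_{\bar Q},\cdot)$ of the $\bar Q$-antichain.'' This presupposes that every maximum antichain of the lower block ends on its top level and every maximum antichain of the upper block begins on its bottom level. That is false whenever a spread has an even number of parts: by Lemma~\ref{little antichain lemma}(b), a maximum antichain of type $O^{r}(x,\mathbf{k})$ (the case $t=r$) has its top element on the \emph{second-from-top} level, and dually one of type $E^{0}(y,\mathbf{k}')$ (the case $t=0$) has its bottom element on the second-from-bottom level. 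So the endpoints to be matched are distributed over two levels on each side, and the window width varies by up to $2$ depending on which of the four level pairs a given pairing uses. Example~\ref{min 10} shows this concretely: for $P_1=(6^2,5,4,3^2)$ the ten antichains end on level $5$ (the $O_i$) as well as level $6$ (the $E_i$, $T_i$), and the gluing to $P_2=(10,9)$ necessarily uses three different level pairs $(6,9)$, $(6,10)$, $(5,10)$ with three different windows.

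Because of this, the matching is not a single sort-and-pair but the four-level problem that the paper isolates as the Claim inside its proof of Theorem~\ref{spread min part} (the poset $\mathcal{E}=\mathcal{A}\cup\mathcal{B}\cup\mathcal{C}\cup\mathcal{D}$ on the two top levels of the lower block and the two bottom levels of the upper block), solved by sequential applications of Lemmas~\ref{strong theta<eta} and~\ref{strong eta<theta}: first match the two inner levels, then feed whatever survives on one side into the remaining outer level of the other, possibly twice. Note also that even for a single level pair, ``pair the sorted lists index by index'' is not a verification but is precisely the content of Lemmas~\ref{strong theta<eta1} and~\ref{strong eta<theta1}, whose proofs depend on placing the surplus multiplicity in the correct block of columns (the three cases $0\le b\le\delta$, $\delta<b\le\delta+d$, $\delta+d<b$); an arbitrary near-uniform profile paired off by sorting can violate the window at the seam between the two multiplicity values. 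So your outline assembles the right ingredients, but the step you defer as ``the technical heart'' is where the proof actually lives, and the reduction you propose for it rests on a false simplification.
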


\begin{proof}

As mentioned in the proof of Proposition~\ref{spread min part prop} above, to prove the theorem it is enough to  find $\mu(P)$ disjoint maximum antichains in $\mathcal{D}_P$.

Write $P=P_\ell \, \cup \cdots \cup \, P_1$ such that each $P_k$ is a spread and the biggest part of $P_k$ is less than or equal to the smallest part of $P_{k+1}$ minus two, for all $1\leq k<\ell$. We prove the claim by induction on $\ell$.

If $\ell=1$, then $P$ is a spread and the equality is proved in Proposition~\ref{spread min part prop} above. 

Now suppose that $\ell>1$ and assume that the desired equality holds for any union of $\ell-1$ or fewer spreads. 

Consider the partition $\bar{Q}=(P_{\ell}, \cdots, P_2)$ and let $\mathcal{D}_Q$ be the subposet of $\mathcal{D}_{\bar{Q}}$ obtained by removing $r=r(P_1)$ vertices from each end of each row of $\mathcal{D}_{\bar{Q}}$ (so the corresponding partition $Q=\bar{Q}-2r(P_1)$). By the inductive hypothesis, there are $\mu(P_1)$ disjoint antichains, each of length $r$, in $\mathcal{D}_{P_1}$. There are also $\mu(Q)$ disjoint antichains, each of length $r(Q)=r(\bar{Q})$, in $\mathcal{D}_{Q}$. 

By Definition~\ref{mu defn}, $\mu(P)=\min\{\mu(P_1),\mu(Q)\}$. Every maximum antichain in $\mathcal{D}_P$ is a union of a maximum antichain in $\mathcal{D}_{P_1}$ and an antichain corresponding to a maximum antichain in $\mathcal{D}_Q$. By Lemma~\ref{antichain condition} and Corollaries~\ref{chiz} and~\ref{chiz prime}, to complete the proof, it is enough to prove the following claim:

\noindent{\bf Claim.} Let $P=((q+3)^{D},(q+2)^{C},p^{B},(p-1)^{A})$ with $q\geq p$. Assume that $a,b,c,d, \epsilon$ and $\delta$ are non-negative integers such that $a\leq A$, $b\leq B$, $c\leq C$, $d\leq D$, $\delta<q$, and $\epsilon < p$. We define the subposet  $\mathcal{E}$ of $\mathcal{D}_P$ as follows:
$$\begin{array}{lll}
\mathcal{E}&=&\mathcal{A}\cup\mathcal{B}\cup\mathcal{C}\cup\mathcal{D}, \mbox{ such that}\\
\mathcal{A}&=&\{(x,p-1,k)\,|\, 1\leq x\leq p-1, 1\leq k \leq a\}\\
\mathcal{B}&=&\{(x,p,k)\,|\, 1\leq x\leq \epsilon, 1\leq k \leq b+1\} \cup \{(x,p,k)\,|\, \epsilon< x\leq p, 1\leq k \leq b\}\\
\mathcal{C}&=&\{(y+1,q,k)\,|\, 1\leq y\leq \delta, 1\leq k \leq c+1\} \cup \{(y+1,q,k)\,|\, \delta< y\leq q, 1\leq k \leq c\}\\
\mathcal{D}&=&\{(y+1,q+1,k)\,|\, 1\leq y\leq q+1, 1\leq k \leq d\}.
\end{array}$$  
Then there are $\mu(P)$ disjoint antichains of length 2 in $\mathcal{E}$. 
\bigskip

We begin by matching vertices $\{(x,p,1)\,|\, 1\leq x\leq \epsilon\}$ from $\mathcal{B}$ with vertices $\{(y+1,q,k)\,|\, 1\leq y\leq \delta\}$ from $\mathcal{C}$, and removing those antichains. There will be $\min\{\epsilon, \delta\}$ such antichains. Then we remove all of the vertices of such antichains to obtain subposets $\mathcal{B}_1\subseteq \mathcal{B}$ and $\mathcal{C}_1\subseteq \mathcal{C}$. 

Next we match vertices from $\mathcal{B}_1$ and $\mathcal{C}_1$ to obtain more antichains of length 2. If $q=p$, then there is only a trivial matching of these vertices. If $q>p$, then we can apply the matching methods of Lemma~\ref{strong theta<eta} (if $\epsilon \leq \delta$) or Lemma~\ref{strong eta<theta} (if $\epsilon \geq \delta$) to obtain disjoint length 2 antichains in $\mathcal{B}_1\cup\mathcal{C}_1$. Now remove all the vertices belonging to any of such antichains to get subposets $\mathcal{B}_2\subseteq \mathcal{B}_1$ and $\mathcal{C}_2\subseteq \mathcal{C}_1$. Then $\mathcal{B}_2=\emptyset$ or $\mathcal{C}_2=\emptyset$. 
\bigskip

If $\mathcal{B}_2=\emptyset$, apply Lemma~\ref{strong theta<eta} or Lemma~\ref{strong eta<theta} to $\mathcal{C}_2\cup\mathcal{A}$ to obtain more disjoint antichains of length 2. If this uses up all vertices in $\mathcal{A}$, then the proof is complete. Otherwise, to complete the proof, we apply the Lemmas one more time, this time to the remaining vertices $\mathcal{A}_1\cup\mathcal{D}$. 
\bigskip

If $\mathcal{C}_2=\emptyset$, we apply Lemma~\ref{strong theta<eta} or Lemma~\ref{strong eta<theta} to $\mathcal{B}_2\cup\mathcal{D}$. If this uses up all vertices in $\mathcal{D}$, then the proof is complete. Otherwise, we will need to apply the Lemmas one more time, this time to the remaining vertices $\mathcal{A}\cup\mathcal{D}_1$ to complete the proof. 

\end{proof}

\begin{example}\label{min 10} Let $P_1=(6^2,5,4,3^2)$. Then 
$$\begin{array}{l}
\mu(P_1)=\min\{3(2)+4(1),3(2)+4(2),3(1)+4(2)\}=10.
\end{array}$$ 
Using a method similar to the one discussed in the proof of Proposition~\ref{spread min part prop}, we can get the following 10 disjoint maximum antichains (length-2) in $\mathcal{D}_{P_1}$ (see Figure 6).
$$\begin{array}{l}
O_1=\{(1,3,1),(2,5,1)\},
\,\, O_2=\{(2,3,1),(3,5,1)\},\,\, 
O_3=\{(3,3,1),(4,5,1)\},\\

E_1=\{(1,4,1),(2,6,1)\},
\,\, E_1=\{(2,4,1),(3,6,1)\},\,\, 
E_3=\{(3,4,1),(4,6,1)\},\,\, E_4=\{(4,4,1),(5,6,1)\},\\

T_1=\{(1,3,2),(2,6,2)\},
\,\, T_2=\{(2,3,2),(3,6,2)\},\,\, 
T_3=\{(3,3,2),(4,6,2)\}.

\end{array}$$

\begin{figure}
\vspace{-.5 in}
\begin{center}\label{example1}
\includegraphics[scale=.35]{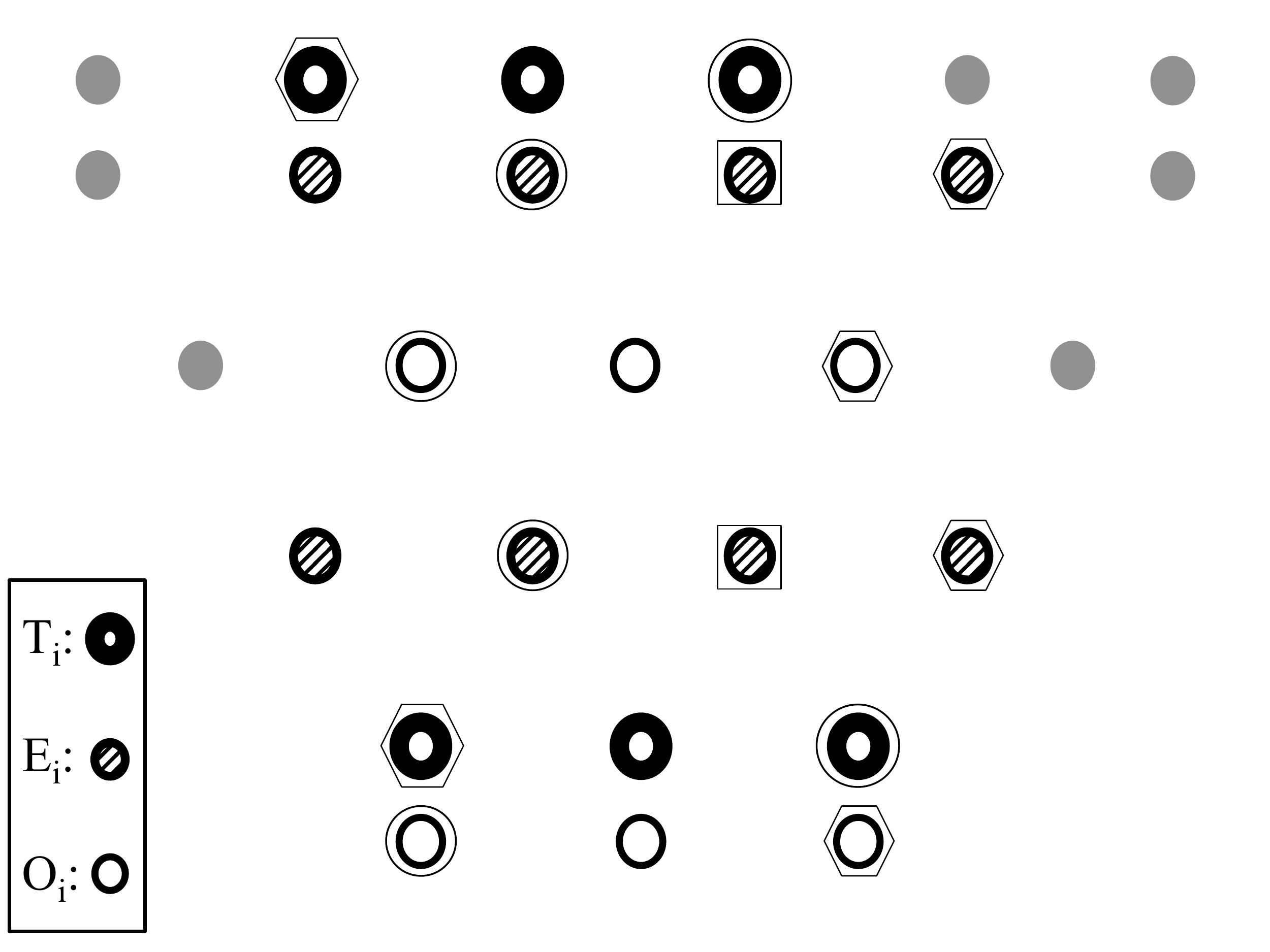}
\end{center}
\caption{Disjoint maximum antichains in $\mathcal{D}_{P_1}$ for $P_1=(6^2,5,4,3^2)$.}
\end{figure}
Now suppose that $P_2=(10,9)$ and $P=(P_2,P_1)$. Then 
$$\begin{array}{l}
\mu(P)=\min\{10, 9-2r(P_1)+10-2r(P_1))\}=10.
\end{array}$$ 

As discussed in the proof of Theorem~\ref{spread min part}, to get 10 maximum antichains in $\mathcal{D}_{P}$, we will match the top vertex of a maximum antichain of $\mathcal{D}_{P_1}$, with a vertex in level 9 or 10. Applying Lemma~\ref{strong theta<eta1}(a), with $p=4$, $g=1$, $\eta=1$, $d=0$, $\delta=3$ and $\theta=1$, we get the following matching of vertices in levels 6 and 9. (see Figure 7).
$$
\mathcal{A}_{6,9}=\left\{\begin{array}{llll}
&& \{(4,6,2),(6,9,1)\}\\
\{(2,6,1),(3,9,1)\}, & \{(3,6,1),(4,9,1)\}, & \{(4,6,1),(5,9,1)\}, & \{(5,6,1),(7,9,1)\}
\end{array} \right\}.
$$
 We then need to match the remaining vertices in level 6 with vertices in level 10. We do this using Lemma~\ref{strong theta<eta}(b) with $p=4$, $g=2$, $\eta=0$, $d=0$, $\delta=2$ and $\theta=1$ (see the dashed grouping in Figure 7). We get

$$A_{6,10}=\left\{\begin{array}{llllllllllllllllllllll}
\{(2,6,2),(3,10,1)\}, & \{(3,6,2),(4,10,1)\}&&&&&&&&&&&&&&&&&&&
\end{array}\right\}.$$
Finally, we match the vertices in level 5, with vertices left in level 10, using Lemma~\ref{strong eta<theta1}(c), with $p=3$, $g=3$, $\eta=1$, $d=0$, $\epsilon=2$ and $\theta=1$. (See Figure 7.) We get 
$$\mathcal{A}_{5,10}=\left\{\begin{array}{llllllllllllllllllllll}
&&&&&&\{(2,5,1),(5,10,1)\}, & \{(3,5,1),(6,10,1)\},& \{(4,5,1),(7,10,1)\}
\end{array}\right\}.
$$

\begin{figure}
\vspace{-.5 in}
\begin{center}
\includegraphics[scale=.35]{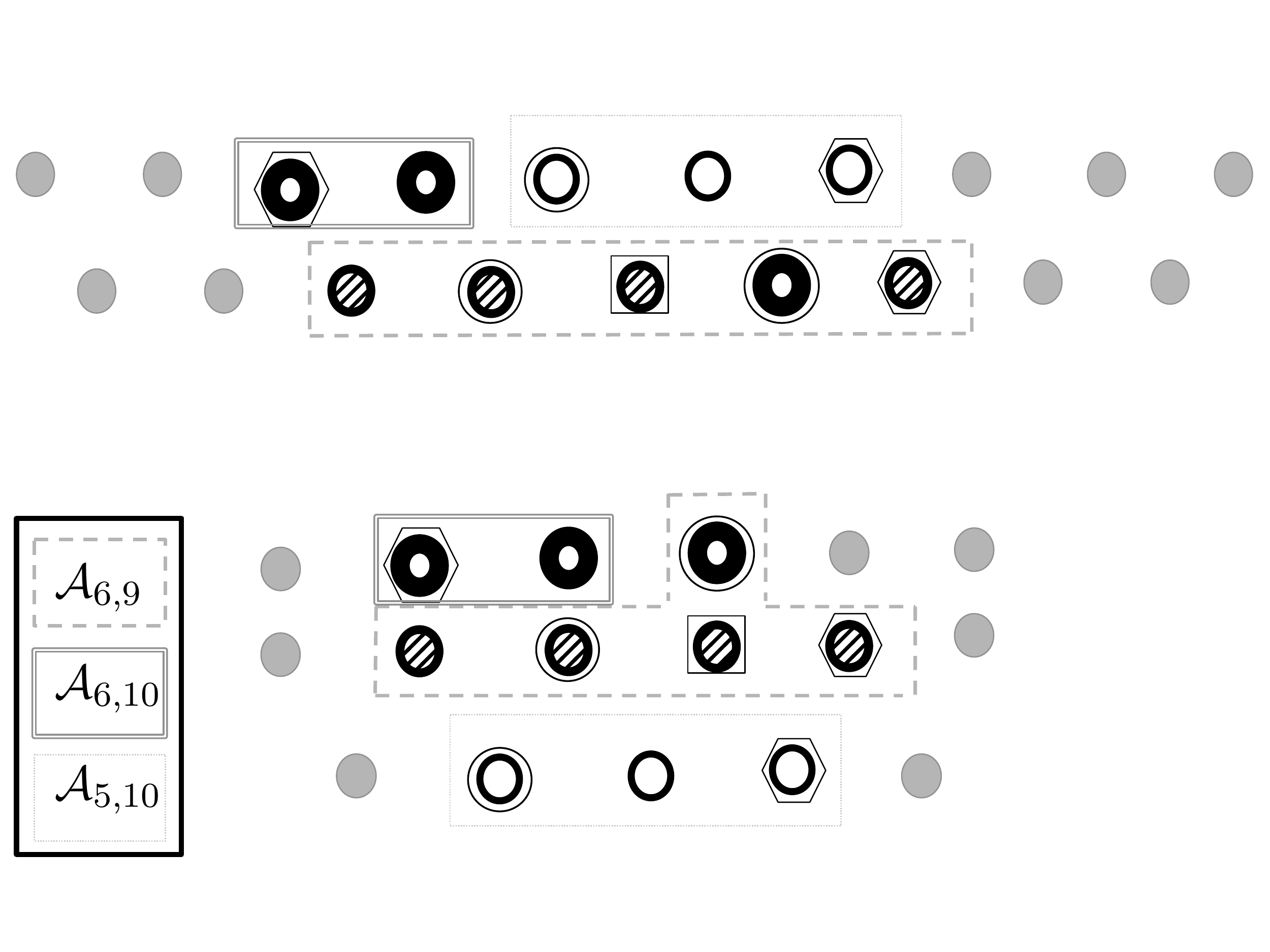}
\vspace{-.5 in}
\end{center}
\caption{Matching antichains in $\mathcal{D}_{P_1}$ with antichains in $\mathcal{D}_{P_2}$.}
\end{figure}

Thus we obtain the following 10 disjoint antichains, each of length 3, in $\mathcal{D}_P$:
$$\begin{array}{ll}
\{(1,3,1),(2,5,1),(5,10,1)\}, 
&\{(2,3,1),(3,5,1),(6,10,1)\}, \\ \{(3,3,1),(4,5,1),(7,10,1)\},&

\{(1,4,1),(2,6,1),(3,9,1)\}, \\ \{(2,4,1),(3,6,1),(4,9,1)\},& \{(3,4,1),(4,6,1),(5,9,1)\}, \\ \{(4,4,1),(5,6,1),(7,9,1)\},&

\{(1,3,2),(2,6,2), (3,10,1)\}, \\\{(2,3,2),(3,6,2),(4,10,1)\},&
\{(3,3,2),(4,6,2),(6,9,1)\}.

\end{array}$$

\end{example}

\bigskip

\begin{corollary} \label{min part}
For any partition $P$, $\min(\lambda(P))=\min(\lambda_U(P))=\mu(P).$
\end{corollary}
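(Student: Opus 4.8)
The plan is to observe that this corollary is an immediate consequence of the two main results already established in Sections~\ref{min} and~\ref{min of lambda}, namely Theorem~\ref{Oblak min} and Theorem~\ref{spread min part}. Both of these are stated for an \emph{arbitrary} partition $P$, so no reduction to the spread case or further combinatorial work is needed here; the substantive effort has already been expended in proving those two theorems.

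Concretely, I would first invoke Theorem~\ref{Oblak min}, which asserts that $\min(\lambda_U(P)) = \mu(P)$ for every partition $P$. Next I would invoke Theorem~\ref{spread min part}, which asserts that $\min(\lambda(P)) = \mu(P)$ for every partition $P$. Chaining these two equalities through the common value $\mu(P)$ yields
$$
\min(\lambda(P)) = \mu(P) = \min(\lambda_U(P)),
$$
which is precisely the claimed triple equality. Thus the proof is a one-line deduction.

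Since there is genuinely no new obstacle at this stage, the only thing worth remarking is \emph{where} the difficulty actually lay: Theorem~\ref{Oblak min} reduced the computation of $\min(\lambda_U(P))$ to evaluating the combinatorial invariant $\mu(P)$ on a maximum $(r-1)$-$U$-chain (via Proposition~\ref{U min combine} and a case analysis on the parity of the spread length $s$), while Theorem~\ref{spread min part} required the far more delicate enumeration of disjoint maximum antichains in $\mathcal{D}_P$ — the content of Lemmas~\ref{strong theta<eta1}--\ref{strong eta<theta} and the algorithmic construction in Proposition~\ref{spread min part prop}, used in conjunction with Greene's theorem (Theorem~\ref{conjugate}) to identify $\min(\lambda(P))$ with the maximal number of such disjoint antichains. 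Given those results as black boxes, as the excerpt permits, the corollary requires nothing beyond transitivity of equality, so I would keep the write-up to a single sentence citing the two theorems.
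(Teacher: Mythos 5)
Your proof is correct and is exactly the paper's own argument: the corollary follows immediately by combining Theorem~\ref{Oblak min} and Theorem~\ref{spread min part}, both of which are stated for arbitrary partitions. Nothing further is needed.
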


\begin{proof}
Combine Theorem~\ref{Oblak min} and Theorem~\ref{spread min part}. 
\end{proof}

\section{Jordan partition of the generic commuting nilpotent matrix}\label{Qmin}

Let $\mathcal{D}$ be a finite poset with cardinality $n$. Recall that the {\it incidence algebra} $\mathcal{I}(\mathcal{D})$ of $\mathcal{D}$ is defined as the subalgebra of $\mathcal{M}at_n(\k)$ consisting of all matrices $M$ with $M_{ij} \neq 0$ if and only if $i\leq j$ in $\mathcal{D}.$ If $\mathcal{D}$ has no loops, a nilpotent element $M$ of $\mathcal{I}(\mathcal{D})$ is called generic if for all $i<j$ the entries $M_{ij}$ are independent transcendentals over the field $\k$ (see \cite{Britz-Fomin}). 

In \cite{Saks} and \cite{Gansner},  M. Saks and E. Gansner independently proved that if char$\k=0$ then $\lambda(\mathcal{D})$ is the Jordan partition of a generic matrix in the incidence algebra $\mathcal{I}(\mathcal{D})$.

The same proofs (also see \cite[Theorem 6.1]{Britz-Fomin}) in fact show that over an infinite field $\k$, the Jordan partition of any nilpotent matrix $M\in \mathcal{I}(\mathcal{D})$ is dominated by $\lambda(P)$ (see \cite[Theorem B Equation (1.7)]{IK}). Now if we fix a nilpotent matrix $B \in \mathcal{M}at_n(\k)$ in Jordan canonical form with Jordan partition $P$, then a generic element of $\mathcal{U}_P$, is a nilpotent matrix in the incidence algebra $\mathcal{I}(\mathcal{D}_P)$ of the poset $\mathcal{D}_P$ (see \cite[\mbox{Equation } 2.18]{BIK}). Thus \begin{equation}\label{Qlambda}
Q(P)\leq \lambda(P).\end{equation}

\begin{theorem}\label{Q min part}
Let $\k$ be an infinite field and $B \in \mathcal{M}at_n(\k)$ a nilpotent matrix with Jordan partition $P$. Then $\min(Q(P))=\min(\lambda_U(P))=\min(\lambda(P))=\mu(P).$
\end{theorem}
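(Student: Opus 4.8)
The plan is to realize $\min(Q(P))$ as squeezed between two quantities already identified with $\mu(P)$, so that no new combinatorics is needed. The paper has assembled exactly the three ingredients required: the dominance chain $\lambda_U(P) \leq Q(P) \leq \lambda(P)$ (the left inequality is \cite[Theorem 3.9]{IK}, the right is Equation (\ref{Qlambda})); the fact that all three partitions have the same number of parts $r(P)$ (Proposition~\ref{number of parts}); and the equality $\min(\lambda_U(P)) = \min(\lambda(P)) = \mu(P)$ (Corollary~\ref{min part}). The only genuinely new observation is an elementary monotonicity principle linking the dominance order to smallest parts.

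First I would record the following elementary fact: if $A$ and $B$ are partitions of $n$ with the same number of parts $r$ and $A \leq B$ in the dominance order, then $\min(A) \geq \min(B)$. Writing $A = (a_1 \geq \cdots \geq a_r)$ and $B = (b_1 \geq \cdots \geq b_r)$, the smallest parts are $a_r = n - \sum_{i=1}^{r-1} a_i$ and $b_r = n - \sum_{i=1}^{r-1} b_i$; since $A \leq B$ gives $\sum_{i=1}^{r-1} a_i \leq \sum_{i=1}^{r-1} b_i$, subtracting from the common total $n$ reverses the inequality and yields $a_r \geq b_r$. The hypothesis of equal number of parts is what makes this work: it guarantees that both partial sums stop at the same index $r-1$ and that the full sums to $r$ agree.

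Applying this fact twice along the dominance chain, legitimately because all three partitions have exactly $r(P)$ parts, produces
$$\min(\lambda_U(P)) \geq \min(Q(P)) \geq \min(\lambda(P)).$$
Invoking Corollary~\ref{min part} to identify the two outer terms as $\mu(P)$ then collapses this to $\mu(P) \geq \min(Q(P)) \geq \mu(P)$, forcing $\min(Q(P)) = \mu(P)$ and thereby closing the full string of equalities asserted in the theorem.

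I do not expect any serious obstacle, as all the combinatorial weight has been front-loaded into Theorem~\ref{Oblak min} and Theorem~\ref{spread min part}, together with the external input \cite[Theorem 3.9]{IK}. The one point deserving care is ensuring that the equal-number-of-parts hypothesis genuinely holds for all three partitions at once; this is precisely the role of Proposition~\ref{number of parts}, and it is what upgrades the one-sided dominance bounds into a clean two-sided squeeze rather than mere inequalities.
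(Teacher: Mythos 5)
Your proposal is correct and follows essentially the same route as the paper's own proof: the dominance chain $\lambda_U(P)\leq Q(P)\leq \lambda(P)$, Proposition~\ref{number of parts}, and Corollary~\ref{min part} are combined in exactly the same way, with your explicit monotonicity lemma (equal number of parts plus dominance reverses the inequality on smallest parts) being precisely the step the paper leaves implicit by citing Definition~\ref{dominance order}. No gap; nothing further is needed.
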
 
\begin{proof}
First note that by Equation~\ref{Qlambda} above and \cite[Theorem 3.9]{IK}, we have \begin{equation}\label{<<}\lambda_U(P) \leq Q(P) \leq \lambda(P).\end{equation} Therefore, since by Proposition~\ref{number of parts}, the number of parts of $\lambda_U(P)$, $Q(P)$ and $\lambda(P)$ are the same, we get $\min(\lambda(P)) \leq \min(Q(P)) \leq \min{\lambda_U(P)}$ (see Definition~\ref{dominance order}).
So the proof is complete by the equality $\min(\lambda(P))=\min{\lambda_U(P)}$ of Corollary~\ref{min part}.
\end{proof}

\begin{remark}
Theorem~\ref{Q min part} and Definition~\ref{mu defn} of $\mu$ give an algorithm to find the smallest part of $Q(P)$ for any partition $P$.
\end{remark}

\begin{corollary}\label{need gap}
Let $P$ and $P'$ be two partitions such that the largest part of $P$ is smaller than the smallest part of $P'$ minus 1. Then $\min(Q(P'\, \cup \, P))=\min\{\mu(P), \mu(P'-2r(P))\}.$

\end{corollary}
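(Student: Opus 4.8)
The plan is to obtain this statement directly by splicing together two results already in hand: the identification of the smallest part of $Q$ with the combinatorial invariant $\mu$ (Theorem~\ref{Q min part}), and the behaviour of $\mu$ under a suitably gapped union of partitions (Lemma~\ref{mu union}). No new combinatorics is needed; the work is entirely in arranging these two equalities in the right order.

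First I would apply Theorem~\ref{Q min part} to the single partition $R=P'\cup P$ of $n$. Since $\k$ is an infinite field, that theorem gives $\min(Q(P'\cup P))=\mu(P'\cup P)$, which reduces the assertion of the corollary to the purely combinatorial identity $\mu(P'\cup P)=\min\{\mu(P),\mu(P'-2r(P))\}$. The only thing to confirm at this step is that $P'\cup P$ is a legitimate partition to which Theorem~\ref{Q min part} applies, which is immediate.

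Next I would invoke Lemma~\ref{mu union}. Its hypothesis, that the largest part of $P$ is smaller than the smallest part of $P'$ minus $1$, is verbatim the hypothesis of the corollary, so it applies with no extra verification and yields $\mu(P'\cup P)=\min\{\mu(P),\mu(P'-2r(P))\}$. Chaining this with the equality from the previous paragraph gives $\min(Q(P'\cup P))=\min\{\mu(P),\mu(P'-2r(P))\}$, as claimed. I do not expect any genuine obstacle here: the substantive content has already been discharged in establishing Theorem~\ref{Q min part} — in particular the sandwich $\lambda_U(P)\leq Q(P)\leq \lambda(P)$ (Equation~\ref{<<}) combined with the coincidence of smallest parts in Corollary~\ref{min part} — so the corollary is a formal consequence, the lone point of care being that the gap hypotheses of the corollary and of Lemma~\ref{mu union} match exactly.
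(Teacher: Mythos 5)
Your proposal is correct and is exactly the paper's own proof: Corollary~\ref{need gap} is deduced there by combining Theorem~\ref{Q min part} (which gives $\min(Q(P'\cup P))=\mu(P'\cup P)$) with Lemma~\ref{mu union} (which gives $\mu(P'\cup P)=\min\{\mu(P),\mu(P'-2r(P))\}$ under the stated gap hypothesis). No further comment is needed.
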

\begin{proof}
This is a consequence of Theorem~\ref{Q min part} and Lemma~\ref{mu union}.
\end{proof}

\begin{remark}
The equality in Corollary~\ref{need gap} does not hold if  the largest part of $P$ is not smaller than the smallest part of $P'$ minus 1. 

For example, let $P=(2,1)$ and $P'=(3^2)$. Then $\mu(P'\, \cup \, P)=1$ while $\mu(P)=3$ and $\mu(P'-2r(P))=\mu((1^2))=2$.
\end{remark}

Next we give an explicit formula for $Q(P)$, when it has at most three parts.

We first recall the following result of P. Oblak proved over real numbers and then extended to an arbitrary infinite field by A. Iarrobino and the author.

\begin{proposition}\label{index} (See \cite[Theorem 13]{Oblak} and \cite[Corollary 3.10]{IK})
Let $P=(\ldots, p^{n_p}, \ldots)$ be a partition of $P$. The largest part--the index-- of $Q(P)$, $i(Q(P))$ is $$i(Q(P))=\max\{an_a+(a+1)n_{a+1}+2\sum_{p>a+1}n_p\, |\, a\in \mathbb{N}\}.$$
\end{proposition}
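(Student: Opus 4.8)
The plan is to pin down $i(Q(P))=Q(P)_1$ by squeezing it between the largest parts of $\lambda_U(P)$ and $\lambda(P)$. By \eqref{Qlambda} and \cite[Theorem 3.9]{IK} we have $\lambda_U(P)\le Q(P)\le \lambda(P)$ in the dominance order, and by Proposition~\ref{number of parts} all three partitions have exactly $r(P)$ parts; comparing the first partial sums then gives $\lambda_{U,1}(P)\le i(Q(P))\le \lambda_1(P)$. Now $\lambda_{U,1}(P)$ is, by Definition~\ref{lambda U}, the maximal cardinality of a simple $U$-chain, and by Definition~\ref{general U chain} every simple $U$-chain is some $U_{\{a,a+1\}}$; hence by \eqref{simple} we get $\lambda_{U,1}(P)=\max_a\{a\,n_a+(a+1)\,n_{a+1}+2\sum_{p>a+1}n_p\}$, which is exactly the asserted right-hand side. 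So the whole statement reduces to the equality $\lambda_1(P)=\lambda_{U,1}(P)$: the longest chain in $\mathcal{D}_P$ is no longer than the longest simple $U$-chain, the reverse inequality being immediate since a simple $U$-chain is a chain.

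To prove $\lambda_1(P)\le \lambda_{U,1}(P)$ I would first linearize the comparability of $\mathcal{D}_P$. Assigning to a vertex $(x,p,k)$ the pair $(\alpha,\beta)=(x,\,x-p)$, Remark~\ref{compare} shows that $(x,p,k)\le (y,q,\ell)$ forces both $\alpha$ and $\beta$ to weakly increase, while two columns are incomparable exactly when their $\alpha$- and $\beta$-coordinates are strictly oppositely ordered (this is just the criterion of Lemma~\ref{antichain condition}(a)). Consequently, along any chain the coordinates $\alpha$ and $\beta$ are simultaneously non-decreasing, so the set of columns $(x,p)$ that the chain meets is totally ordered in the componentwise order on $\mathbb{Z}^2$, and within a single column the chain can use at most $n_p$ vertices. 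Thus the length of a maximum chain equals the maximum, over all componentwise-monotone sets $S$ of columns, of $\sum_{(x,p)\in S}n_p$, and each occupied column may be filled completely.

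It then remains a finite combinatorial optimization, whose relevant local facts all follow from Remark~\ref{compare}: the two extreme columns $x\in\{1,p\}$ of any level are pairwise comparable with the extreme columns of every other level; two full consecutive levels $a,a+1$ together form a chain, whereas two full levels whose sizes differ by at least $2$ always contain an incomparable pair; and alongside a full level $a$ a higher level $q>a$ can contribute only its two extreme columns, while a lower level at distance $\ge 2$ can contribute nothing. An exchange argument then transforms any monotone column set, without decreasing $\sum n_p$, into the canonical configuration consisting of two consecutive full levels $a,a+1$ together with the two extreme columns of every higher level, of weight $a\,n_a+(a+1)\,n_{a+1}+2\sum_{p>a+1}n_p$ (the single-level case being recovered by allowing $n_{a+1}=0$). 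Maximizing over $a$ yields $\lambda_1(P)=\lambda_{U,1}(P)$, and hence the formula.

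The main obstacle is precisely this last exchange/optimality step: showing that no componentwise-monotone column set can beat the two-consecutive-full-levels configuration. This amounts to bounding, under the monotonicity constraint, the number of columns each level may contribute—degrading every level except the chosen consecutive pair to at most its two extreme columns and killing all levels below the pair—and is the genuinely combinatorial heart of the argument. Everything else is a routine translation through Remark~\ref{compare} and the squeeze already assembled in Theorem~\ref{Q min part}.
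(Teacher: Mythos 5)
Your setup is sound as far as it goes: the squeeze $\lambda_U(P)\le Q(P)\le \lambda(P)$ from Equation~(\ref{Qlambda}) and \cite[Theorem 3.9]{IK} gives $\lambda_{U,1}(P)\le i(Q(P))\le \lambda_1(P)$ by comparing first partial sums, and $\lambda_{U,1}(P)$ equals the stated maximum directly from Definition~\ref{general U chain} and Equation~(\ref{simple}); likewise your translation of chains in $\mathcal{D}_P$ into componentwise-monotone sets of columns via $(x,p)\mapsto (x,x-p)$ is a correct reading of Remark~\ref{compare}, and occupied columns may indeed be taken full. But there is a genuine gap exactly where you flag it: the equality $\lambda_1(P)=\lambda_{U,1}(P)$ is never established. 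Every ``local fact'' you list uses a \emph{full} level as a pivot (a full level $a$ forces levels at distance at least $2$ above it to contribute only their two extreme columns, kills levels at distance at least $2$ below it, etc.), whereas a maximum monotone column set need not contain any full level at all: it can take partial families of columns from several levels simultaneously, and none of your facts constrain such configurations. (Also, as stated, your fourth fact is false for $q=a+1$; it must be restricted to $q\ge a+2$.) The exchange argument converting an arbitrary monotone column set, without weight loss, into the canonical configuration of two consecutive full levels plus the extreme columns of all higher levels is precisely the nontrivial content of the result; it requires its own induction/deformation argument and cannot be dismissed as a ``finite combinatorial optimization.''

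Note also that the paper itself does not prove this proposition: it is quoted with attribution to \cite[Theorem 13]{Oblak} and \cite[Corollary 3.10]{IK}, and the rest of the paper (Theorem~\ref{Q min part}, Corollary~\ref{3 parts}) uses it as an input. So what you have actually done is reprove the easy half of the cited result --- the squeeze and the identification of the right-hand side with $\lambda_{U,1}(P)$ --- while leaving unproved the combinatorial theorem (longest chain in $\mathcal{D}_P$ equals longest simple $U$-chain) that the citation exists to supply. To complete the argument along your lines you would have to carry out the optimality proof for general partial-level configurations, which is the substance of Oblak's and Iarrobino--Khatami's proofs.
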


Note that $i(Q(P))$ is the length of the longest simple $U$-chain in $\mathcal{D}_P$ (see Equation~(\ref{simple})). So if $r(P)=1$, {\it i.e.} $P$ is an almost rectangular partition, then$Q(P)=\lambda(P)=\mbox{Ob}(P)=(n)$. If $r(P)=2$, then by Proposition~\ref{index} we have $Q(P)=\lambda(P)=(i(Q(P)), n-i(Q(P))).$

\begin{corollary}\label{3 parts}
Let $P$ be a partition of $n$. If $r(P)=3$, then $$Q(P)=(i(Q(P)), n-i(Q(P))-\mu(P),\mu(P)).$$
\end{corollary}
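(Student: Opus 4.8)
The plan is to simply assemble three facts already established earlier in the paper, since for a partition with exactly three parts everything is pinned down by the largest part, the smallest part, and the total. First I would invoke Proposition~\ref{number of parts}, which guarantees that $Q(P)$ has exactly $r(P)=3$ parts; write these parts as $q_1\geq q_2\geq q_3$ so that $Q(P)=(q_1,q_2,q_3)$ and $q_1+q_2+q_3=n$ because $Q(P)$ is a partition of $n$.

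Next I would identify the two extreme parts. The largest part $q_1$ is by definition the index $i(Q(P))$, whose explicit value is supplied by Proposition~\ref{index} (the longest simple $U$-chain in $\mathcal{D}_P$). The smallest part $q_3$ is exactly $\mu(P)$ by Theorem~\ref{Q min part}, which asserts $\min(Q(P))=\mu(P)$. Having fixed $q_1=i(Q(P))$ and $q_3=\mu(P)$, the middle part is then forced: subtracting from the total gives $q_2=n-q_1-q_3=n-i(Q(P))-\mu(P)$, which is precisely the claimed formula.

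The last point to address is consistency of the ordering, namely that the triple $\bigl(i(Q(P)),\,n-i(Q(P))-\mu(P),\,\mu(P)\bigr)$ really is a partition, i.e. $i(Q(P))\geq n-i(Q(P))-\mu(P)\geq \mu(P)$. This requires no separate argument: because $Q(P)$ is a genuine partition with largest part $i(Q(P))$ and smallest part $\mu(P)$, the remaining part automatically lies between them, so the displayed expression is just $Q(P)$ rewritten in decreasing order.

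Since the result is an immediate synthesis of Propositions~\ref{number of parts} and~\ref{index} together with Theorem~\ref{Q min part}, there is no genuine obstacle here; the substantive work has all been done in establishing those inputs, and the only thing to be careful about is citing the exact part-count ($r(P)=3$) so that the ``top plus bottom determines the middle'' reasoning is valid.
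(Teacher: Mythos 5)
Your proposal is correct and follows the paper's own argument exactly: the paper likewise combines Theorem~\ref{Q min part} (smallest part is $\mu(P)$) with Proposition~\ref{index} (largest part is $i(Q(P))$), the part count $r(P)=3$, and subtraction from $n$ to pin down the middle part. Your added remark that the ordering is automatic is a harmless elaboration of what the paper leaves implicit.
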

\begin{proof}
Recall that $\mu(P)$ defined by Definition~\ref{mu defn} is, by Theorem~\ref{Q min part}, the smallest part of $Q(P)$. So the proof is clear by  Proposition~\ref{index}. 
\end{proof}

\begin{example}\label{hey}
Let $P=(8^2,7,6,5^2, 3,2^4)$. Then $r(P)=3$ and $Q(P)$ has 3 parts. 

By Proposition~\ref{index}, $i(Q(P))=|U_{\{2,3\}}|=|U_{\{7,8\}|}=23.$

We can also write $P=P_2\, \cup \,P_1$ where $P_2=(8^2,7,6,5^2)$ and $P_1=(3,2^4)$ are both spreads. By Definition~\ref{mu defn}, we have
$$\begin{array}{ll}
\mu(P_1)=11,\\
\mu(P_2-2)=10,\mbox{ (See Example }\ref{min 10}), \mbox{ and} \\
\mu(P)=\min \{\mu(P_2-2),\mu(P_1)\}=10.
\end{array}$$
Therefore $Q(P)=(23, 17,10).$

\end{example}

The enumeration of antichains in $\mathcal{D}_P$ and their comparison to parts of $\lambda_U(P)$ can also lead to explicit formulas for $Q(P)$ when it has more than three parts. The following statement gives one such formula for special families of partitions $P$ with $r(P)=4$. 
\begin{proposition}\label{4 parts}
Let $P'$ be a partition such that $Q(P')$ has three parts, and let $P''$ be an almost rectangular partition. Suppose that the largest part of $P''$ is smaller than the smallest part of $P'$ minus 1. If $ \mu(P'') \leq \mu(P'-2)$ then $Q(P'\, \cup \, P'')=Q(P')\, \cup \, Q(P'').$
\end{proposition}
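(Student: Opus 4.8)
The plan is to sandwich $Q(P'\cup P'')$ between $\lambda_U(P'\cup P'')$ and $\lambda(P'\cup P'')$ and to show that both end terms coincide with the target partition $Q(P')\cup Q(P'')$. Write $N'$ for the number of parts of $P'$ and $n''=|P''|$. Because the largest part of $P''$ is at most $\min(P')-2$, the counting in the proof of Proposition~\ref{U min combine} gives $r(P'\cup P'')=r(P')+r(P'')=3+1=4$, so by Proposition~\ref{number of parts} all of $\lambda_U$, $Q$ and $\lambda$ of $P'\cup P''$ have exactly four parts. Since $r(P'')=1$ we have $Q(P'')=(n'')$, and by Corollary~\ref{3 parts}, $Q(P')=\big(i(Q(P')),\,|P'|-i(Q(P'))-\mu(P'),\,\mu(P')\big)$. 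By Equation~(\ref{<<}) we have $\lambda_U(P'\cup P'')\leq Q(P'\cup P'')\leq \lambda(P'\cup P'')$, so it suffices to prove
$$\lambda_U(P'\cup P'')=\lambda(P'\cup P'')=\big(i(Q(P')),\ |P'|-i(Q(P'))-\mu(P'),\ \mu(P'),\ n''\big).$$

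First I would compute $\lambda_U(P'\cup P'')$ via its maximum $k$-$U$-chain sizes $u_k$. By Corollary~\ref{need gap} together with the hypothesis $\mu(P'')\leq\mu(P'-2)$, the smallest part is $\mu(P'\cup P'')=\mu(P'')=n''$, that is $u_3=|P'|$. A $k$-$U$-chain whose index set lies entirely in the range of the parts of $P'$ is confined to the $P'$-levels and realizes exactly the chains counted by $u_k(P')$, so $u_k(P'\cup P'')\geq u_k(P')$ for $k=1,2$; and since $r(P')=3$ one has $\lambda_U(P')=Q(P')$, whence $u_1(P')=i(Q(P'))$ and $u_2(P')=|P'|-\mu(P')$. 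The only competitors are the ``crossing'' chains whose index set dips into the small parts of $P''$: a crossing simple $U$-chain collects all of $P''$ together with the two extreme columns of every $P'$-level, contributing at most $n''+2N'$. This is precisely where the hypothesis enters: $\mu(P'')\leq\mu(P'-2)$ gives $n''\leq\mu(P'-2)$, and combined with the inequality $\mu(P'-2)+2N'\leq i(Q(P'))$ (equivalently $u_1(P')+u_2(P'-2)\geq |P'|$) this yields $n''+2N'\leq i(Q(P'))$, and an analogous bound for $k=2$. Hence no crossing $U$-chain beats the $P'$-confined optimum, $u_1=i(Q(P'))$, $u_2=|P'|-\mu(P')$, $u_3=|P'|$, and $\lambda_U(P'\cup P'')$ is the stated partition.

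Next I would show $\lambda(P'\cup P'')$ equals the same partition. By Greene's Theorem~\ref{conjugate}, $\lambda(P'\cup P'')$ is the conjugate of the antichain partition $\tilde\lambda$; its number of parts is the width $4$ and its smallest part is the number of disjoint maximum antichains, already identified with $\mu(P'\cup P'')=n''$ in Section~\ref{min of lambda}. For the two middle parts I would run the gluing construction from the proof of Theorem~\ref{spread min part}: Corollaries~\ref{chiz} and~\ref{chiz prime} describe the multiplicity profiles $m_{\mathcal{A}}$ of a family of disjoint maximum antichains at the top and bottom levels of each spread, and Lemmas~\ref{strong theta<eta} and~\ref{strong eta<theta} match these profiles across the gap between the $P''$-levels and the $P'$-levels. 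Because $n''=\mu(P'')\leq\mu(P'-2)$, the matching exhausts the $P''$-block against the two extreme columns of the $P'$-block without disturbing the interior antichain structure of $P'$, so the unions of $k$ chains realizing $c_k(\mathcal{D}_{P'\cup P''})$ may be taken to be $k$-$U$-chains; thus $c_k=u_k$ for all $k$ and $\lambda(P'\cup P'')=\lambda_U(P'\cup P'')$. Combining the two computations with Equation~(\ref{<<}) forces $Q(P'\cup P'')=Q(P')\cup Q(P'')$.

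The main obstacle is the transition from the numerical hypothesis $\mu(P'')\leq\mu(P'-2)$ to the geometric statement that the crossing configurations never exceed those confined to $P'$. Concretely, the crux is the inequality $\mu(P'-2)+2N'\leq i(Q(P'))$ (equivalently $u_1(P')+u_2(P'-2)\geq |P'|$) and its higher analogue used to pin $u_2$, together with the verification that the antichain gluing of Corollaries~\ref{chiz} and~\ref{chiz prime} keeps the four parts from interleaving; both are exactly the points at which the gap hypothesis is indispensable, and I expect the antichain side ($\lambda=\lambda_U$) to require the most care.
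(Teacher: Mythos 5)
Your overall skeleton is the paper's: squeeze $Q(P'\cup P'')$ between $\lambda_U(P'\cup P'')$ and $\lambda(P'\cup P'')$ via Equation~(\ref{<<}) and show both ends equal $(i,\,n-i-\mu,\,\mu,\,m)$, where $i=i(Q(P'))$, $\mu=\mu(P')$, $n=|P'|$ and $m=n''=|P''|$. But each of your two computations has a genuine gap. On the $\lambda_U$ side you must rule out the ``crossing'' chains, and you correctly isolate the inequality $m+2N'\leq i$; that one is in fact easy to prove: with $p_1$ the smallest part of $P'$, $m\leq \mu(P'-2)\leq (p_1-2)n_{p_1}+(p_1-1)n_{p_1+1}=|U_{\{p_1,p_1+1\}}|-2N'\leq i-2N'$. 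Your ``analogous bound for $k=2$'', however, is the inequality $m+2N'+u_1(P'-2)\leq n-\mu$, which in the extremal case $m=\mu(P'-2)$ is equivalent to $\mu(P')\leq u_2(P'-2)-u_1(P'-2)=\lambda_{U,2}(P'-2)$. This is a delicate statement --- it holds with \emph{equality} already for $P'=(7,5,4,3)$, $P''=(1)$ --- and you give no argument for it; it does not follow from the gap hypothesis by any soft counting. Notably, the paper never proves this inequality either: its proof is organized precisely so that it is never needed as an input (it drops out a posteriori once the theorem is proved).

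The second gap is your $\lambda$-side argument, which is a non sequitur as stated: Corollaries~\ref{chiz}, \ref{chiz prime} and Lemmas~\ref{strong theta<eta}, \ref{strong eta<theta} construct \emph{antichains}, and antichain information constrains $\lambda$ only through the conjugation in Greene's Theorem~\ref{conjugate}; it cannot be converted into the assertion that optimal unions of $k$ chains ``may be taken to be $k$-$U$-chains'', i.e.\ $c_k=u_k$. What the paper actually does is quantitative antichain counting: it exhibits $m$ disjoint length-$4$ antichains in $\mathcal{D}_{P'\cup P''}$ and, disjoint from them, $\mu-m$ further length-$3$ antichains (this is where the gluing machinery and the hypothesis $m\leq\mu(P'-2)$ are really used); by Theorem~\ref{conjugate} this forces the third part of $\lambda(P'\cup P'')$ to be at least $\mu$, equivalently $\lambda_1+\lambda_2\leq n-\mu$. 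Combined with the cheap chain lower bound $c_2\geq u_2(P'\cup P'')\geq u_2(P')=n-\mu$, with the identification of the first parts with $i$ (your easy crossing bound plus the fact, underlying Proposition~\ref{index}, that the longest chain in the poset is a simple $U$-chain), and with both last parts equal to $m$ (Theorem~\ref{Oblak min}, Lemma~\ref{mu union}, Theorem~\ref{spread min part}), dominance then pins down $\lambda(P'\cup P'')=\lambda_U(P'\cup P'')=(i,n-i-\mu,\mu,m)$ and the sandwich closes. In other words, the paper replaces the upper bound on crossing $2$-$U$-chains that you are missing by an antichain upper bound on $\lambda_1+\lambda_2$. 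To repair your proposal you would have to either prove $\mu(P')\leq\lambda_{U,2}(P'-2)$ directly (nontrivial, since it is sharp), or restructure the $\lambda$ step as the Greene-counting-plus-dominance argument just described.
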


\begin{proof}
Suppose that $P'$ is a partition of $n$ and $P''$ is a partition of $m$. Let $i=i(Q(P'))$ and $\mu=\mu(P')$. Then by Corollary~\ref{3 parts} we have $\lambda(P')=Q(P')=\lambda_U(P')=(i, n-i-\mu, \mu).$ Also, since $P''$ is an almost rectangular partition, we have $Q(P'')=(m).$ 

By Lemma~\ref{mu union}, we have $\mu(P'\, \cup \, P'')=m$. By Definition~\ref{lambda U} of $\lambda_U$, in this case we have $\lambda_U(P'\, \cup \, P'')=(i, n-i-\mu, \mu,m).$ We also have $\lambda_U(P'\, \cup \,P'') \leq \lambda(P'\, \cup \,P'')$.

On the other hand by Theorem~\ref{spread min part}, $\mathcal{D}_{P'}$ contains  $\mu$ disjoint antichains of length 3, and $\mathcal{D}_{P'\cup P''}$ contains $m$ disjoint antichains of length 4. Thus if we remove elements of all those length-4 antichains from $\mathcal{D}_{P'\cup P''}$ we can still find $\mu-m$ disjoint antichains of length 3 in $\mathcal{D}_{P'\cup P''}$. Let $\lambda(P'\, \cup \, P'')=(i,a,b,m).$ By Theorem~\ref{Greene}, we must have $b\geq \mu$. Thus $i+a=n-b\leq n-\mu=i+(n-i-\mu)$. Since $\lambda_U(P'\, \cup \,P'')$ is dominated by $\lambda_U(P'\, \cup \,P'')$, this implies $\lambda_U(P'\, \cup \,P'') =\lambda(P'\, \cup \,P'')$. Thus the proof of this part is complete by Equation~(\ref{<<}). 

\end{proof}

\begin{example}
Let $P=(10^2,9,8,7^2, 5,4^4,2^3,1^4)$. Then $r(P)=4$ and $Q(P)$ has 4 parts. Let $P'=(10^2,9,8,7^2, 5,4^4)$ and $P''=(2^3,1^4)$. Note that $P=P'\,\cup \, P''$. We have $\mu(P'')=10$ and $\mu(P'-2)=\mu(\,(8^2,7,6,5^2, 3,2^4)\,)=10$ by Example~\ref{hey}. Thus, by Proposition~\ref{4 parts} we have $Q(P)=(Q(P'),Q(P'')).$

By Corollary~\ref{3 parts}, $Q(P')=(29,22,13)$. Thus $Q(P)=(29, 22, 13, 10).$

\end{example}

\bibliography{ref}

\begin{thebibliography}{10}

\bibitem{Basili03}
R.~Basili.
\newblock On the irreducibility of commuting varieties of nilpotent matrices.
\newblock {\em J. Algebra}, 268(1):58--80, 2003.

\bibitem{BI}
R.~Basili and A.~Iarrobino.
\newblock Pairs of commuting nilpotent matrices, and {H}ilbert function.
\newblock {\em J. Algebra}, 320(3):1235--1254, 2008.

\bibitem{BIK}
R.~Basili, A.~Iarrobino, and L.~Khatami.
\newblock Commuting nilpotent matrices and {A}rtinian algebras.
\newblock {\em J. Commut. Algebra}, 2(3):295--325, 2010.

\bibitem{Britz-Fomin}
T.~Britz and S.~Fomin.
\newblock Finite posets and {F}errers shapes.
\newblock {\em Adv. Math.}, 158(1):86--127, 2001.

\bibitem{Gansner}
E.~R. Gansner.
\newblock Acyclic digraphs, {Y}oung tableaux and nilpotent matrices.
\newblock {\em SIAM J. Algebraic Discrete Methods}, 2(4):429--440, 1981.

\bibitem{Greene}
C.~Greene.
\newblock Some partitions associated with a partially ordered set.
\newblock {\em J. Combinatorial Theory Ser. A}, 20(1):69--79, 1976.

\bibitem{HW}
T.~Harima and J.~Watanabe.
\newblock The commutator algebra of a nilpotent matrix and an application to
  the theory of commutative {A}rtinian algebras.
\newblock {\em J. Algebra}, 319(6):2545--2570, 2008.

\bibitem{IK}
A.~Iarrobino and L.~Khatami.
\newblock Bound on the {J}ordan type of a generic nilpotent matrix commuting
  with a given matrix.
\newblock {\em To appear in Journal of Algebraic Combinatorics}, 2013.

\bibitem{uniqueness}
L.~Khatami.
\newblock The poset of the nilpotent commutator of a nilpotent matrix.
\newblock {\em arXiv:1202.6089}, 2012.

\bibitem{KO}
T.~Ko{\v{s}}ir and P.~Oblak.
\newblock On pairs of commuting nilpotent matrices.
\newblock {\em Transform. Groups}, 14(1):175--182, 2009.

\bibitem{Oblak}
P.~Oblak.
\newblock The upper bound for the index of nilpotency for a matrix commuting
  with a given nilpotent matrix.
\newblock {\em Linear Multilinear Algebra}, 56(6):701--711, 2008.

\bibitem{Pa}
D.~I. Panyushev.
\newblock Two results on centralisers of nilpotent elements.
\newblock {\em J. Pure Appl. Algebra}, 212(4):774--779, 2008.

\bibitem{Saks}
M.~Saks.
\newblock Dilworth numbers, incidence maps and product partial orders.
\newblock {\em SIAM J. Algebraic Discrete Methods}, 1(2):211--215, 1980.

\end{thebibliography}
\bibliographystyle{abbrv}

\bigskip

{\sc Department of Mathematics, Union College, Schenectady, NY 12308} \par
{\it E-mail Address}: khatamil@union.edu
\end{document}